\newtheorem{thm}{Theorem}[section]
\newtheorem{cor}[thm]{Corollary}
\newtheorem{lem}[thm]{Lemma}
\newtheorem{prop}[thm]{Proposition}
\theoremstyle{definition}
\newtheorem{definition}[thm]{Definition}
\newtheorem{remark}[thm]{Remark}
\renewcommand{\epsilon}{\varepsilon}
\renewcommand{\phi}{\varphi}
\newcommand{\defeq}{\mathrel{\mathop:}=}
\DeclareMathOperator{\spt}{spt}
\DeclareMathOperator{\diam}{diam}
\begin{document}

\setlist{noitemsep}

\author{Friedrich Martin Schneider}
\address{F.M.S., Institute of Algebra, TU Dresden, 01062 Dresden, Germany }
\curraddr{Departamento de Matem\'atica, UFSC, Trindade, Florian\'opolis, SC, 88.040-900, Brazil}
\email{martin.schneider@tu-dresden.de}

\title[Equivariant concentration]{Equivariant concentration in topological groups}
\date{\today}

\keywords{Topological groups, topological dynamics, measure concentration, observable distance, observable diameter, $mm$-spaces}

\begin{abstract} 
  We prove that, if $G$ is a second-countable topological group with a compatible right-invariant metric $d$ and $(\mu_{n})_{n \in \mathbb{N}}$ is a sequence of compactly supported Borel probability measures on $G$ converging to invariance with respect to the mass transportation distance over $d$ and such that $(\spt \mu_{n}, d\!\!\upharpoonright_{\spt \mu_{n}}, \mu_{n}\!\!\upharpoonright_{\spt \mu_{n}})_{n \in \mathbb{N}}$ concentrates to a fully supported, compact $mm$-space $(X,d_{X},\mu_{X})$, then $X$ is homeomorphic to a $G$-invariant subspace of the Samuel compactification of $G$. In particular, this confirms a conjecture by Pestov and generalizes a well-known result by Gromov and Milman on the extreme amenability of topological groups. Furthermore, we exhibit a connection between the average orbit diameter of a metrizable flow of an arbitrary amenable topological group and the limit of Gromov's observable diameters along any net of Borel probability measures UEB-converging to invariance over the group.
\end{abstract}

\subjclass[2010]{54H11, 54H20, 22A10, 53C23}

\maketitle


\section{Introduction}

Over the past few decades, the study of the measure concentration phenomenon has become a central theme in topological dynamics, in particular in the context of infinite-dimensional transformation groups. In fact, measure concentration ranges among the two most prominent pathways to extreme amenability, next to Ramsey-type phenomena~\cite{Pestov98,pestov02,KechrisPestovTodorcevic}. The origin of this development is marked by the groundbreaking work of Gromov and Milman~\cite{GromovMilman}, who showed that every \emph{L\'evy group}, i.e., a topological group containing a L\'evy family of compact subgroups with dense union, is extremely amenable, and who, moreover, exhibited a number of striking examples of such groups, e.g., the unitary group of the infinite-dimensional separable Hilbert space equipped with the strong operator topology. Their ideas were followed by numerous other examples, e.g.,~\cite{GiordanoPestov,Pestov07,CarderiThom}.

In his seminal work on metric measure geometry~\cite[Chapter~3$\tfrac{1}{2}$]{Gromov99}, Gromov offered a far-reaching extension of the measure concentration phenomenon: he introduced the \emph{observable distance} -- a metric on the set of isomorphism classes of \emph{$mm$-spaces}, i.e., separable complete metric spaces equipped with a Borel probability measure. The topology generated by this metric, the \emph{concentration topology}, captures the (classical) measure concentration phenomenon in a very natural manner: a sequence of $mm$-spaces constitutes a L\'evy family if and only if it \emph{concentrates} (converges in Gromov's observable distance) to a singleton space. But of course, the concentration topology allows for non-trivial limit objects, and recent years' growing interest in Polish groups with metrizable universal minimal flow~\cite{KechrisPestovTodorcevic,MNVTT,BMT} suggests to study manifestations of concentration to non-trivial spaces for topological groups. We attempt to advance this idea, originating in~\cite{PestovBook,GiordanoPestov}, with our main result.

\begin{thm}\label{theorem:equivariant.concentration} Let $G$ be a second-countable topological group equipped with a right-invariant compatible metric $d$. Suppose that there exists a sequence $(\mu_{n})_{n \in \mathbb{N}}$ of Borel probability measures on $G$ with compact supports $K_{n} \defeq \spt \mu_{n}$ $(n \in \mathbb{N})$ such that \begin{itemize}[leftmargin=11mm]
	\item[\textnormal{(A)}] $(\mu_{n})_{n \in \mathbb{N}}$ converges to invariance in the mass transportation distance over $d$, \smallskip
	\item[\textnormal{(B)}] $\left(K_{n}, d\!\!\upharpoonright_{K_{n}}, \mu_{n}\!\!\upharpoonright_{K_{n}}\right)_{n \in \mathbb{N}}$ concentrates to a fully supported, compact $mm$-space $(X,d_{X},\mu_{X})$.
\end{itemize} Then there exists a topological embedding $\psi \colon X \to \mathrm{S}(G)$ such that the push-forward measure $\psi_{\ast}(\mu_{X})$ is $G$-invariant. In particular, $\psi (X)$ is a $G$-invariant subspace of $\mathrm{S}(G)$. \end{thm}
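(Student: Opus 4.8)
The plan is to use the hypotheses (A) and (B) to build the embedding as a limit of ``sampling maps'' into the Samuel compactification $\mathrm{S}(G)$. Recall that $\mathrm{S}(G)$ is the maximal ideal space of the algebra $\mathrm{UCB}(G)$ of bounded right-uniformly continuous functions on $G$ (the Samuel/greatest-ambit compactification), and $G$ acts on it continuously by left translation. The key device linking finite samples in $G$ to $\mathrm{S}(G)$ is the family of functions $x \mapsto d(x, S)$ for finite $S \subseteq G$: since $d$ is right-invariant, these lie in $\mathrm{UCB}(G)$ (after truncation), hence extend continuously to $\mathrm{S}(G)$, and the left-translates $g \mapsto d(xg, S)$ are again controlled uniformly. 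I would therefore consider, for each $n$, a map $K_{n} \to \ell^{\infty}$-like coordinates recording distances to a fixed countable dense sequence, and take an ultralimit.

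The main steps, in order. \emph{Step 1 (From concentration to a limiting parametrization).} By hypothesis (B) and the definition of Gromov's observable distance, after passing to a subsequence one gets, for every $1$-Lipschitz $f \colon X \to \mathbb{R}$, $1$-Lipschitz functions $f_{n} \colon K_{n} \to \mathbb{R}$ such that the push-forwards $(f_{n})_{\ast}\mu_{n}$ converge weakly to $f_{\ast}\mu_{X}$; more usefully, using a countable dense set of Lipschitz functions one can produce measurable ``parametrizations'' $\Phi_{n} \colon (K_{n},\mu_{n}) \to (X,d_{X})$ (in the sense of mm-reconstruction, e.g.\ via the Lévy--Milman/Gromov reconstruction theorem) whose distortion tends to $0$ in measure. \emph{Step 2 (From the $\mu_{n}$ to continuous functions on $G$).} For $x,y \in X$ pick, for large $n$, preimages under $\Phi_{n}$; the function $g \mapsto d_{X}(\Phi_{n}(\text{nearest point to } e \text{ in } K_{n}g^{-1}), \cdot )$ is not yet what I want, so instead I would go through the algebra: for each pair of points in a countable dense $D \subseteq X$, build a function in $\mathrm{UCB}(G)$ as a limit (along an ultrafilter on $\mathbb{N}$) of the maps $g \mapsto \int_{K_{n}} (\text{bump around the }\Phi_{n}\text{-fibre}) \, d\mu_{n}$; condition (A), i.e.\ $\sup_{g\in L} W_{d}(g_{\ast}\mu_{n},\mu_{n}) \to 0$ for compact $L$, guarantees that these functions are \emph{left-uniformly} controlled, so the ultralimit genuinely lands in $\mathrm{UCB}(G)$ and the limiting family separates the points of $X$ and is left-equicontinuous. \emph{Step 3 (Assembling $\psi$).} The separating left-equicontinuous family from Step 2 gives a $G$-equivariant continuous map $\mathrm{S}(G) \to \mathbb{R}^{D\times D}$ (equivalently, an embedding of $X$ into the Gelfand spectrum); restricting and using separation yields a topological embedding $\psi \colon X \hookrightarrow \mathrm{S}(G)$. \emph{Step 4 (Invariance of $\psi_{\ast}\mu_{X}$).} For $g \in G$ and a test function $F \in C(\mathrm{S}(G))$, compute $\int F \, d(g_{\ast}\psi_{\ast}\mu_{X}) - \int F \, d\psi_{\ast}\mu_{X}$ by pulling back to the $\mu_{n}$: it becomes $\lim_{n} \big(\int F\circ(\text{sampling})\, d(g_{\ast}\mu_{n}) - \int F\circ(\text{sampling})\, d\mu_{n}\big)$, which is bounded by $\|F\|_{\mathrm{Lip}}\, W_{d}(g_{\ast}\mu_{n},\mu_{n}) \to 0$ by (A). Hence $\psi_{\ast}\mu_{X}$ is $G$-invariant, and its support $\psi(X)$ is a $G$-invariant subspace.

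The delicate point — the main obstacle — is Step 2: marrying the two hypotheses. Concentration (B) only provides Lipschitz observables defined on $K_{n}$ that match $X$ \emph{in measure}, with no a priori compatibility with the group structure; invariance (A) controls how $\mu_{n}$ moves under translation but says nothing about where individual points go. The bridge is that both are expressed through $1$-Lipschitz functions and $1$-Wasserstein transport over the \emph{same} right-invariant $d$: a $1$-Lipschitz $f$ on $(G,d)$ satisfies $|\int f\, d(g_{\ast}\mu_{n}) - \int f \, d\mu_{n}| \le W_{d}(g_{\ast}\mu_{n},\mu_{n})$, and right-invariance makes $f(\cdot\, h)$ again $1$-Lipschitz, which is exactly what forces the ultralimit observables to be \emph{right}-uniformly continuous (hence in $\mathrm{UCB}(G)$, hence extendable to $\mathrm{S}(G)$) while being \emph{left}-equicontinuous (hence yielding a $G$-map). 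Making the ``sampling/bump'' functions precise enough that their distortion and their non-continuity both vanish in the limit — so that the limiting map is simultaneously an isometric-type embedding on $X$ and $G$-equivariant — is where the real work lies; everything else is a weak-convergence and Gelfand-duality bookkeeping exercise.
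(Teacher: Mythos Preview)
Your overall architecture is in the same spirit as the paper's: use the characterization of concentration via maps $p_n \colon K_n \to X$, pass to an ultralimit to produce an object linking $\mathrm{RUCB}(G)$ and $\mathrm{C}(X)$, and then use (A) to get $G$-invariance of the pushed measure. Step~4 is essentially correct and matches the paper. But Step~2/3 --- the part you yourself flag as ``where the real work lies'' --- has a genuine gap, not merely an imprecision.

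The paper does \emph{not} try to build individual separating functions in $\mathrm{RUCB}(G)$ and then assemble an embedding from them. Instead it constructs, via an ultrafilter, a unital algebra homomorphism $\Phi \colon \mathrm{Lip}^{\infty}(G,d) \to \mathrm{Lip}(X,d_X)$ (sending a Lipschitz $f$ on $G$ to the uniform ultralimit of the $f_n \in \mathrm{Lip}(X)$ approximating $f\vert_{K_n}$ in measure), extends it to $\overline{\Phi} \colon \mathrm{RUCB}(G) \to \mathrm{C}(X)$, and then invokes Gelfand duality: $\psi(x)(f) \defeq \overline{\Phi}(f)(x)$ gives $\psi \colon X \to \mathrm{S}(G)$. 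The entire difficulty is showing that $\psi$ is an \emph{embedding}, equivalently that $\overline{\Phi}$ is \emph{surjective}. This is exactly what your proposal does not address: knowing that some ultralimit functions ``separate points of $X$'' is far weaker than knowing that \emph{every} $f \in \mathrm{C}(X)$ arises as $\overline{\Phi}(f^{\ast})$ for some $f^{\ast} \in \mathrm{RUCB}(G)$.

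The paper's device for surjectivity is something your outline is missing entirely: a case split on whether $G$ is precompact. If $G$ is precompact, one shows directly that $(K_n,d,\mu_n)$ concentrates to the compact completion $K$ with Haar measure, forcing $X \cong K$. If $G$ is \emph{not} precompact, one uses this to find right-translates $g_n$ so that the translated supports $S_n = K_n g_n$ satisfy $U S_m \cap U S_n = \emptyset$ for $m \ne n$ and some fixed neighbourhood $U$ of $e$. Right-invariance of $d$ makes the translated $mm$-spaces isomorphic to the original ones, so nothing is lost; but now, given any $f \in \mathrm{Lip}(X,d_X)$, the approximating $f'_n \in \mathrm{Lip}(S_n)$ can be \emph{glued} into a single bounded Lipschitz function $f^{\ast}$ on $G$ (the uniform gap between the $S_n$ controls the Lipschitz constant across pieces), and one checks $\Phi(f^{\ast}) = f$ with $\Vert f^{\ast} \Vert_\infty = \Vert f \Vert_\infty$. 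This quotient-norm identity, plus Lemma~\ref{lemma:density}, yields surjectivity of $\overline{\Phi}$. Your ``bump around the $\Phi_n$-fibre'' construction has no analogue of this gluing step, and without the separation of supports there is no reason the functions you build on different $K_n$ can be assembled into a \emph{single} element of $\mathrm{RUCB}(G)$ with prescribed behaviour in the limit.
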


\enlargethispage{2mm}

Due to recent work of the author and Thom~\cite[Theorem~3.2]{SchneiderThom}, every amenable second-countable topological group in fact admits a sequence of finitely supported probability measures converging to invariance with respect to the mass transportation distance over any right-invariant compatible metric. Furthermore, any Borel probability measure on a second-countable topological space assigns value $1$ to its support, and thus the restriction of the measure to the Borel $\sigma$-algebra of its support will indeed be a probability measure. In particular, this applies to the measures considered in condition~(B) of Theorem~\ref{theorem:equivariant.concentration}.

Since any minimal invariant closed subspace of the Samuel compactification $\mathrm{S}(G)$ of a topological group $G$ is a (up to isomorphism, the) universal minimal flow of $G$~\cite[Chapter~8]{auslander} (see also~\cite{ellis,deVries,usp}), the theorem above may be used to compute universal minimal flows of topological groups, or at least prove their metrizability. As universal minimal flows of non-compact locally compact groups are always non-metrizable~\cite[Theorem~A2.2]{KechrisPestovTodorcevic}, no such group can possibly satisfy the hypothesis of Theorem~\ref{theorem:equivariant.concentration}.

In particular, Theorem~\ref{theorem:equivariant.concentration} confirms a 2006 conjecture by Pestov~\cite[Conjecture~7.4.26]{PestovBook}: if $G$ is a metrizable topological group, equipped with a compatible right-invariant metric $d$, and $(K_{n})_{n \in \mathbb{N}}$ is an increasing sequence of compact subgroups such that \begin{itemize}
	\item[$\bullet$] the union $\bigcup_{n \in \mathbb{N}} K_{n}$ is everywhere dense in $G$, and
	\item[$\bullet$] $\left(K_{n}, d\!\!\upharpoonright_{K_{n}}, \mu_{n} \right)_{n \in \mathbb{N}}$ concentrates to a fully supported, compact $mm$-space $(X,d_{X},\mu_{X})$, where $\mu_{n}$ denotes the normalized Haar measure on $K_{n}$ $(n \in \mathbb{N})$,
\end{itemize} then the topological space $X$ supports the structure of a $G$-flow, with respect to which it admits a morphism to every $G$-flow. Indeed, it is easily seen that, by density of the increasing union of compact subgroups, the corresponding Haar measures converge to invariance in the mass transportation distance over $d$, whence Theorem~\ref{theorem:equivariant.concentration} asserts that $X$ is homeomorphic to a $G$-invariant subspace of $\mathrm{S}(G)$, which in turn gives rise to a $G$-flow on $X$ with the desired property, cf.~\cite[Corollary~3.1.12]{PestovBook}.

For a discussion of examples of the above kind of non-trivial concentration phenomenon, we refer to~\cite[Section~7]{GiordanoPestov} and~\cite[Chapter~7.4]{PestovBook}. In this connection, there is another intriguing question by Pestov~\cite[Problem~7.4.27]{PestovBook}: given a left-invariant metric~$d$ on the full symmetric group $\mathrm{Sym}(\mathbb{N})$ compatible with the topology of point-wise convergence, do the subgroups $(\mathrm{Sym}(n))_{n \in \mathbb{N}}$, equipped with their normalized counting measures and the restrictions of $d$, concentrate to the closed subspace $\mathrm{LO}(\mathbb{N}) \subseteq 2^{\mathbb{N} \times \mathbb{N}}$ of linear orders on $\mathbb{N}$, endowed with the unique $\mathrm{Sym}(\mathbb{N})$-invariant Borel probability measure~\cite{GlasnerWeiss02} and a suitable compatible metric? This question has been answered in the negative recently in~\cite{EquivariantDissipation}: in fact, the considered sequence of finite $mm$-spaces does not even admit a subsequence being Cauchy with respect to Gromov's observable distance.

In addition to Theorem~\ref{theorem:equivariant.concentration}, we will unveil another link between concentration phenomena and topological dynamics: roughly speaking, the average orbit diameter of an arbitrary flow of an amenable topological group, equipped with a continuous pseudo-metric, is bounded from above by the limit inferior of Gromov's observable diameters~\cite{Gromov99} (see Definition~\ref{definition:observable.diameters}) computed for any net of Borel probability measures on the acting group UEB-converging to invariance, with respect to the induced pseudo-metric. More precisely, if $G$ is a topological group and $X$ is a \emph{$G$-flow}, i.e., a non-empty compact Hausdorff space together with a continuous action of $G$ on it, then for any continuous pseudo-metric $d$ on $X$ and any point $x \in X$ there is a right-uniformly continuous pseudo-metric $d_{G,x}$ on $G$ defined by \begin{displaymath}
	d_{G,x}(g,h) \defeq d(gx,hx) \qquad (g,h \in G) ,
\end{displaymath} which is bounded from above by the bounded continuous right-invariant pseudo-metric \begin{displaymath}
	d_{G,X} \defeq \sup\nolimits_{y \in X} d_{G,y} \, \colon \, G \times G \, \longrightarrow \, \mathbb{R} .
\end{displaymath}

\begin{thm}\label{theorem:observable.diameters} Let $G$ be a topological group and let $(\mu_{i})_{i \in I}$ be a net of Borel probability measures on $G$ UEB-converging to invariance over $G$. If $d$ is a continuous pseudo-metric on a $G$-flow $X$ and $\nu$ is a $G$-invariant regular Borel probability measure on $X$, then \begin{displaymath}
	\int \sup\nolimits_{g \in E} d(x,gx) \, d\nu (x) \, \leq \, \sup\nolimits_{\alpha > 0} \liminf\nolimits_{i \to I} \sup\nolimits_{x \in X} \mathrm{ObsDiam}(G,d_{G,x},\mu_{i}; -\alpha) 
\end{displaymath} for every finite subset $E \subseteq G$. \end{thm}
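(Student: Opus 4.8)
The plan is to fix a finite set $E \subseteq G$ and an arbitrary $\alpha > 0$, and to show that the left-hand integral is bounded above by $\liminf_{i \to I} \sup_{x \in X} \mathrm{ObsDiam}(G, d_{G,x}, \mu_i; -\alpha)$; taking the supremum over $\alpha > 0$ afterwards gives the claim. So let me describe how I would bound the integral by a single observable diameter term. The key device is that the observable diameter controls, up to mass $\alpha$, the $d_{G,x}$-diameter of the image of $\mu_i$ under any $1$-Lipschitz real function on $(G, d_{G,x})$. For a fixed finite $E$ and a fixed point $x \in X$, the natural function to plug in is $g \mapsto d_{G,x}(e, g) = d(x, gx)$ — but more useful is to test against functions that separate the orbit points $\{gx : g \in E\}$. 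Concretely, for $g \in E$ the map $h \mapsto d_{G,x}(g,h) = d(gx, hx)$ is $1$-Lipschitz on $(G, d_{G,x})$, and $\int \sup_{g \in E} d(x, gx)\, d\nu(x)$ should be recovered by integrating a suitable combination of such functions against $\mu_i$ and exploiting near-invariance of $\mu_i$.

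Here is the more careful route I would take. Fix $\epsilon > 0$. Since $\nu$ is $G$-invariant and regular, for each $g \in E$ the function $x \mapsto d(x, gx)$ is continuous on the compact $X$, and by invariance $\int d(x, gx)\, d\nu(x) = \int d(g^{-1}x, x)\, d\nu(x)$. I would like to produce, for suitable $i$, a point $x_i \in X$ at which $\sup_{g \in E} d(x_i, g x_i)$ is close to the integral on the left — but that is false pointwise, so instead I integrate. The mechanism: for each $x$, apply the definition of $\mathrm{ObsDiam}(G, d_{G,x}, \mu_i; -\alpha)$ to the $1$-Lipschitz functions $f_{g,x}(h) \defeq d(gx, hx)$ (for $g \in E$), obtaining that, off a set of $\mu_i$-measure at most $\alpha$ (one can spread the $\alpha$ over the $|E|$ functions, or argue one at a time), the values $f_{g,x}(h)$ lie in an interval of length $\sup_x \mathrm{ObsDiam}(G, d_{G,x}, \mu_i; -\alpha)$. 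Now integrate $\int f_{g,x}(h)\, d\mu_i(h)$ over $h$: UEB-convergence to invariance forces $\int d(gx, hx)\, d\mu_i(h) - \int d(x, hx)\, d\mu_i(h) \to 0$, uniformly in $x \in X$, because the family $\{d_{G,y} : y \in X\}$ — equivalently the functions $h \mapsto d(gx,hx)$ — is uniformly equicontinuous and uniformly bounded by $d_{G,X}$, which is exactly a UEB family, and left-translation by $g^{-1}$ moves $d(gx, hx)$ to $d(x, g^{-1}hx)$. Combining: for $i$ far enough along $I$, $\sup_{g \in E} \int d(x, hx)\, d\mu_i(h)$ is, after subtracting the vanishing correction terms and using that the $f_{g,x}$ are concentrated in an interval of width $\sup_x \mathrm{ObsDiam}(\cdots; -\alpha)$ around their $\mu_i$-medians, bounded by roughly $\sup_x \mathrm{ObsDiam}(G, d_{G,x}, \mu_i; -\alpha) + (\text{error} \to 0)$. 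Finally, integrating over $x$ against $\nu$ and using Fatou (or just that this holds for all large $i$) yields $\int \sup_{g \in E} d(x, gx)\, d\nu(x) \le \liminf_i \sup_x \mathrm{ObsDiam}(G, d_{G,x}, \mu_i; -\alpha) + \epsilon$, and then $\epsilon \to 0$ and $\sup_\alpha$ finish it.

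The main obstacle, I expect, is the interchange of $\sup_{g \in E}$ with the integral and with the "concentration on an interval of width $\mathrm{ObsDiam}$" step: the observable diameter bounds each $1$-Lipschitz pushforward $(f_{g,x})_*\mu_i$ within an interval, but the intervals depend on $g$ and on $x$, and I need a single quantity $\sup_x \mathrm{ObsDiam}(\cdots;-\alpha)$ to dominate $\sup_g d(x,gx)$ for $\nu$-most $x$. The honest way to handle this is probably to first prove the bound with $E$ a singleton $\{g\}$ (where the argument above is clean), then observe that the singleton bound already gives $\int d(x,gx)\,d\nu(x) \le \sup_\alpha \liminf_i \sup_x \mathrm{ObsDiam}(\cdots;-\alpha)$ for each individual $g$, and finally upgrade to finite $E$ by a separate argument: cover $X$ by finitely many small balls (compactness) on which all the continuous functions $x\mapsto d(x,gx)$, $g \in E$, are nearly constant, choose within each ball a point where the relevant concentration holds simultaneously for all $g \in E$ (possible since $|E|\cdot\alpha'$-sized exceptional sets can be made small), and patch. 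Getting the quantifiers on $\alpha$ versus $\epsilon$ versus the ball radius to line up — so that the $\sup_{\alpha>0}$ on the right genuinely absorbs the losses — is the delicate bookkeeping, but conceptually it is just Markov's inequality plus uniform equicontinuity of a UEB family plus compactness of $X$; no new idea beyond the definition of the observable diameter and of UEB-convergence should be needed.
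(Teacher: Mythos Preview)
Your proposal has two genuine gaps.

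First, the step where you invoke UEB-convergence to conclude that $\int d(gx, hx)\, d\mu_i(h) - \int d(x, hx)\, d\mu_i(h) \to 0$ is incorrect. You write that ``left-translation by $g^{-1}$ moves $d(gx,hx)$ to $d(x, g^{-1}hx)$'', but left translation acts on the integration variable $h$: precomposing $h \mapsto d(gx, hx)$ with $\lambda_{g^{-1}}$ gives $h \mapsto d(gx, g^{-1}hx)$, not $h \mapsto d(x, g^{-1}hx)$. The identity you need would hold if $d$ were $G$-invariant on $X$, but that is not assumed. More fundamentally, your test functions $f_{g,x}(h) = d(gx, hx)$ never connect to the quantity $d(x,gx)$ you want to bound: the observable diameter tells you each $(f_{g,x})_\ast \mu_i$ is concentrated in a short interval, but the \emph{location} of that interval depends on $g$, and nothing in your argument forces those intervals to coincide.

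Second, even granting the singleton case, the ``upgrade to finite $E$ by covering $X$ with small balls'' does not work: on each ball $B_j$ the supremum $\sup_{g \in E} d(x,gx)$ may be attained by a different $g_j$, and the global bound $\int_X d(x, g_j x)\, d\nu \le D$ from the singleton case gives no control on the partial integral $\int_{B_j} d(x, g_j x)\, d\nu$. Summing over $j$ can yield a quantity as large as $|E|\cdot D$.

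The missing idea, which the paper supplies, is to test not against $h \mapsto d(gx,hx)$ but against $h \mapsto f(hx)$ for a fixed \emph{finite} set $F \subseteq \mathrm{Lip}_1(X,d)$, chosen by compactness so that $d(y,z) \le \sup_{f \in F} |f(y)-f(z)| + \tfrac{\epsilon}{2}$ for all $y,z$. These maps are $1$-Lipschitz for $d_{G,x}$, and the relevant quantity is $|f(hx)-f(ghx)|$, which is small whenever \emph{both} $h$ and $gh$ lie in (a small neighbourhood of) the set $B$ on which all the $f_x \defeq f(\cdot\,x)$, $f \in F$, have oscillation $\le D+\epsilon$. UEB-invariance is then applied not to the Lipschitz functions themselves but to a RUEB family of bump functions $\{p_S : S \subseteq G\}$, to conclude that $\mu_i(g^{-1}UB)$ is large whenever $\mu_i(B)$ is; intersecting over the finitely many $g \in E$ and $f \in F$ handles them all at once, after which $G$-invariance of $\nu$ and Fubini transfer the $\mu_i$-estimate to a $\nu$-estimate.
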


\enlargethispage{2mm}

Let us add some remarks about Theorem~\ref{theorem:observable.diameters}. If $G$ is a topological group acting continuously on a compact Hausdorff space $X$ and $d$ is a continuous pseudo-metric on $X$, then \begin{displaymath}
	\sup\nolimits_{x \in X} \mathrm{ObsDiam}(G,d_{G,x},\mu; -\alpha) \, \leq \, \mathrm{ObsDiam}(G,d_{G,X},\mu; -\alpha)
\end{displaymath} for every Borel probability measure $\mu$ on $G$ and any $\alpha > 0$ (see~Remark~\ref{remark:monotonicity}), which means that Theorem~\ref{theorem:observable.diameters} immediately provides a corresponding estimate in terms of $d_{G,X}$. The same is true for Corollary~\ref{corollary:observable.diameters} below. Of course, if $G$ is separable, then Theorem~\ref{theorem:observable.diameters} asserts that \begin{displaymath}
	\int \sup\nolimits_{g \in G} d(x,gx) \, d\nu (x) \, \leq \, \sup\nolimits_{\alpha > 0} \liminf\nolimits_{i \to I} \sup\nolimits_{x \in X} \mathrm{ObsDiam}(G,d_{G,x},\mu_{i}; -\alpha) 
\end{displaymath} for any net $(\mu_{i})_{i \in I}$ of Borel probability measures on $G$ UEB-converging to invariance over~$G$. Moreover, Theorem~\ref{theorem:observable.diameters} readily implies~\cite[Theorem~3.9]{PestovSchneider}, our Corollary~\ref{corollary:whirly}, an extension of the result for Polish groups by Pestov~\cite[Theorem~5.7]{pestov10}, thus entailing earlier work of Glasner, Tsirelson and Weiss~\cite[Theorem~1.1]{GlasnerTsirelsonWeiss}, who showed that every spatial action of a L\'evy group must be trivial. We refer to the end of Section~\ref{section:observable.diameters} for a brief discussion on this. Furthermore, let us highlight another consequence of Theorem~\ref{theorem:observable.diameters}.

\begin{cor}\label{corollary:observable.diameters} Let $G$ be a topological group and let $(\mu_{i})_{i \in I}$ be a net of Borel probability measures on $G$ UEB-converging to invariance over $G$. If $d$ is a continuous pseudo-metric on a $G$-flow $X$, then there exists $x_{0} \in X$ such that \begin{displaymath}
	\sup\nolimits_{g \in G} d(x_{0},gx_{0}) \, \leq \, \sup\nolimits_{\alpha > 0} \liminf\nolimits_{i \to I} \sup\nolimits_{x \in X} \mathrm{ObsDiam}(G,d_{G,x},\mu_{i}; -\alpha) .
\end{displaymath} \end{cor}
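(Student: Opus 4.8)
The plan is to deduce Corollary~\ref{corollary:observable.diameters} from Theorem~\ref{theorem:observable.diameters} by averaging and an invariant-measure argument, producing the desired point $x_0$ via a standard application of the Krylov--Bogolyubov-type existence of invariant measures for flows. First I would recall that, since $X$ is a $G$-flow — a non-empty compact Hausdorff space with continuous $G$-action — and $G$ acts through the amenability hidden in the UEB-convergence hypothesis, there exists a $G$-invariant regular Borel probability measure $\nu$ on $X$. Concretely, each $\mu_i$ pushes forward under the orbit maps to a measure on $X$; by weak-$\ast$ compactness of the space of regular Borel probability measures on the compact space $X$, a subnet of a suitable family of such measures converges to some $\nu$, and UEB-convergence to invariance of $(\mu_i)_{i\in I}$ forces $\nu$ to be $G$-invariant. (Alternatively, one may simply invoke that $G$ admitting a net converging to invariance is amenable, hence every $G$-flow carries an invariant measure.)

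With such a $\nu$ in hand, fix an arbitrary finite subset $E \subseteq G$ and apply Theorem~\ref{theorem:observable.diameters}: the integral $\int \sup_{g \in E} d(x,gx)\, d\nu(x)$ is bounded above by the quantity $R \defeq \sup_{\alpha > 0} \liminf_{i \to I} \sup_{x \in X} \mathrm{ObsDiam}(G,d_{G,x},\mu_i;-\alpha)$, which is independent of $E$. Since the integrand $\sup_{g \in E} d(x,gx)$ is a non-negative continuous function whose integral against $\nu$ is at most $R$, the set $A_E \defeq \{ x \in X : \sup_{g \in E} d(x,gx) \leq R \}$ has $\nu(A_E) > 0$; in particular each $A_E$ is non-empty, and being a sublevel set of a continuous function it is closed, hence compact. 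Moreover the family $\{A_E : E \subseteq G \text{ finite}\}$ is downward directed: $A_{E_1} \cap A_{E_2} \supseteq A_{E_1 \cup E_2} \neq \emptyset$. By the finite intersection property for compact sets, $\bigcap_{E} A_E \neq \emptyset$; choosing any $x_0$ in this intersection gives $d(x_0, g x_0) \leq R$ for every $g \in G$, and taking the supremum over $g$ yields $\sup_{g \in G} d(x_0, g x_0) \leq R$, as required.

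The main point to handle carefully is the existence of the invariant measure $\nu$ and, relatedly, whether one needs $\nu(A_E) > 0$ or merely $A_E \neq \emptyset$ — the latter already follows from $\int_{A_E^{\mathrm c}}$-reasoning since a non-negative function with integral $\leq R$ cannot exceed $R$ everywhere on a set of full measure, so $A_E$ always meets the support of $\nu$. One must make sure that UEB-convergence to invariance genuinely delivers an invariant measure on every $G$-flow; this is where amenability of $G$ (implied by the hypothesis, as the existence of such a net is one of the standard equivalent formulations) is used, and it is the step most likely to require a short lemma or an explicit pointer to the literature. Everything else — the compactness-plus-finite-intersection argument and the directedness of the family $\{A_E\}$ — is routine. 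Since the right-hand side $R$ does not depend on $E$, no uniformity issue arises in passing from finite $E$ to all of $G$.
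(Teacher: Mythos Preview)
Your proposal is correct and follows essentially the same route as the paper: invoke amenability of $G$ (via Theorem~\ref{theorem:topological.day}) to obtain a $G$-invariant regular Borel probability measure $\nu$ on $X$, apply Theorem~\ref{theorem:observable.diameters} to each finite $E\subseteq G$, and then use a finite-intersection/compactness argument to produce the desired $x_{0}$. The only cosmetic difference is that the paper argues with sets of the form $\{x:\sup_{g\in E}d(x,gx)\leq D+\epsilon\}$ indexed by pairs $(E,\epsilon)$, whereas you work directly with $A_{E}=\{x:\sup_{g\in E}d(x,gx)\leq R\}$; your version is in fact slightly cleaner, and your observation that $A_{E}\ne\emptyset$ (indeed $\nu(A_{E})>0$) already suffices is exactly the point.
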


The first estimates for orbit diameters, concerning H\"older actions on compact metric spaces, in terms of the isoperimetric behavior of the acting group and covering properties of the phase space belong to Milman~\cite{milman}. For generalizations of Milman's results as well as corresponding estimates for actions of L\'evy groups on a certain class of non-compact metric spaces, we refer to Funano's work~\cite{funano}.

Let us briefly outline the structure of the present article. In Section~\ref{section:gromov} we recollect some elementary facts and concepts concerning metrics and measures, leading up to the definition of Gromov's observable distance (Definition~\ref{definition:observable.distance}). In Section~\ref{section:invariance} we provide the background on UEB-convergence to invariance in topological groups necessary for the proof of Theorem~\ref{theorem:equivariant.concentration}, which is given in Section~\ref{section:equivariance}. Our final Section~\ref{section:observable.diameters} is devoted to proving Theorem~\ref{theorem:observable.diameters} and Corollary~\ref{corollary:observable.diameters}, as well as discussing some consequences. 

\section{Metrics, measures, and concentration}\label{section:gromov}

We start off by clarifying some notation. Given a set $X$, we denote by $\ell^{\infty}(X)$ the unital Banach algebra of all bounded real-valued functions on $X$ equipped with the supremum norm \begin{displaymath}
	\Vert f \Vert_{\infty} \defeq \sup \{ \vert f(x) \vert \mid x \in X \} \qquad (f \in \ell^{\infty}(X)) .
\end{displaymath}

Let $X$ be a topological space. If the topology of $X$ is generated by a metric $d$, then we call $d$ a \emph{compatible} metric on $X$. We will denote by $\mathrm{C}(X)$ the set of all continuous real-valued functions on $X$ and we let $\mathrm{CB}(X) \defeq \mathrm{C}(X) \cap \ell^{\infty}(X)$. Moreover, let us denote by $\mathcal{B}(X)$ the Borel $\sigma$-algebra of $X$ and by $\mathrm{P}(X)$ the set of all Borel probability measures on $X$. The \emph{weak topology} on $\mathrm{P}(X)$ is defined to be the initial topology on $\mathrm{P}(X)$ generated by the maps of the form $\mathrm{P}(X) \to \mathbb{R} , \, \mu \mapsto \int f \,d\mu$ where $f \in \mathrm{CB}(X)$. If $X$ is metrizable (or just perfectly normal), then the weak topology turns $\mathrm{P}(X)$ into a Tychonoff space. The \emph{support} of a measure $\mu \in \mathrm{P}(X)$ is defined as \begin{displaymath}
	\spt \mu \defeq \{ x \in X \mid \forall U \subseteq X \text{ open} \colon \, x \in U \Longrightarrow \mu (U) > 0\} ,
\end{displaymath} which is easily seen to form a closed subset of $X$. Given $\mu \in \mathrm{P}(X)$ and a Borel subset $B \subseteq X$ with $\mu (B) = 1$, we let $\mu \!\!\upharpoonright_{B} \, \defeq \mu\vert_{\mathcal{B}(B)} \in \mathrm{P}(B)$. The \emph{push-forward} of a measure $\mu \in \mathrm{P}(X)$ along a Borel map $f \colon X \to Y$ into another topological space $Y$ is defined to be \begin{displaymath}
	f_{\ast}(\mu) \colon \mathcal{B}(Y) \to [0,1], \quad B \mapsto \mu(f^{-1}(B)) .
\end{displaymath} Furthermore, let us note that each $\mu \in \mathrm{P}(X)$ gives rise to a pseudo-metric $\mathrm{me}_{\mu}$ on the set of all Borel measurable real-valued functions on $X$ defined by \begin{displaymath}
	\mathrm{me}_{\mu}(f,g) \defeq \inf \{ \epsilon > 0 \mid \mu (\{ x \in X \mid \vert f(x) - g(x) \vert > \epsilon \}) \leq \epsilon \} 
\end{displaymath} for any two Borel functions $f,g \colon X \to \mathbb{R}$.

Let $(X,d)$ be a pseudo-metric space. Given a subset $A \subseteq X$, we abbreviate $d\!\!\upharpoonright_{A} \, \defeq d\vert_{A \times A}$ and define $\diam (A,d) \defeq \sup \{ d(x,y) \mid x,y \in A \}$. For $x \in A \subseteq X$ and $\epsilon > 0$, we let \begin{align*}
	& B_{d}(x,\epsilon) \defeq \{ y \in X \mid d(x,y) < \epsilon \} , & B_{d}(A,\epsilon) \defeq \{ y \in X \mid \exists a \in A \colon \, d(a,y) < \epsilon \} .
\end{align*} Then the \emph{Hausdorff distance} between any two subsets $A,B \subseteq X$ is given by \begin{displaymath}
	d_{\mathrm{H}}(A,B) \defeq \inf \{ \epsilon > 0 \mid B \subseteq B_{d}(A,\epsilon), \, A \subseteq B_{d}(B,\epsilon) \} .
\end{displaymath} For $\ell, r \geq 0$, we denote by $\mathrm{Lip}_{\ell}(X,d)$ the set of all $\ell$-Lipschitz real-valued functions on $(X,d)$, and we define \begin{displaymath}
	\mathrm{Lip}_{\ell}^{\infty}(X,d) \defeq \mathrm{Lip}_{\ell}(X,d) \cap \ell^{\infty}(X) , \qquad \mathrm{Lip}_{\ell}^{r}(X,d) \defeq \{ f \in \mathrm{Lip}_{\ell}(X,d) \mid \Vert f \Vert_{\infty} \leq r \} .
\end{displaymath} Moreover, we let $\mathrm{Lip}(X,d) \defeq \bigcup \{ \mathrm{Lip}_{\ell}(X,d) \mid \ell \geq 0 \}$ and $\mathrm{Lip}^{\infty}(X,d) \defeq \mathrm{Lip}(X,d) \cap \ell^{\infty}(X)$. The \emph{mass transportation distance}\footnote{Different names appearing in the literature include \emph{Monge-Kontorovich distance}, \emph{bounded Lipschitz distance}, \emph{Wasserstein distance}, and \emph{Fortet-Mourier distance}, see~\cite{RR,GibbsSu,VillaniBook}.} $d_{\mathrm{MT}}$ over $d$ is the pseudo-metric on $\mathrm{P}(X)$ defined by \begin{displaymath}
	d_{\mathrm{MT}}(\mu,\nu) \defeq \sup\nolimits_{f \in \mathrm{Lip}_{1}^{1}(X,d)} \left\lvert \int f \, d\mu - \int f \, d\nu \right\rvert \qquad (\mu, \nu \in \mathrm{P}(X)) .
\end{displaymath} Furthermore, the \emph{Prokhorov distance} $d_{\mathrm{P}}$ over $d$ is the pseudo-metric on $\mathrm{P}(X)$ given by \begin{align*}
	d_{\mathrm{P}}(\mu,\nu) \ &\defeq \ \inf \{ \epsilon > 0 \mid \forall B \in \mathcal{B}(X) \colon \, \mu(B) \leq \nu (B_{d}(B,\epsilon)) + \epsilon \} \\
		& \ = \ \inf \{ \epsilon > 0 \mid \forall B \in \mathcal{B}(X) \colon \, \nu(B) \leq \mu (B_{d}(B,\epsilon)) + \epsilon \} \qquad (\mu, \nu \in \mathrm{P}(X)) .
\end{align*} In case that $(X,d)$ is a separable metric space, both $d_{\mathrm{P}}$ and $d_{\mathrm{MT}}$ are metrics compatible~with the weak topology on~$\mathrm{P}(X)$. For a more comprehensive account on these and other probability metrics, the reader is referred to~\cite{RR,GibbsSu,VillaniBook}.

Finally in this section, we will recall the very basics concerning Gromov's concentration topology~\cite[Chapter~3$\tfrac{1}{2}$.H]{Gromov99}, by following the presentation of Shioya~\cite[Chapter~5]{ShioyaBook}. This type of convergence refers to $mm$-spaces. An \emph{$mm$-space} is a triple $(X,d,\mu)$ where $(X,d)$ is a separable complete metric space and $\mu$ is a Borel probability measure on $X$. Moreover, an $mm$-space $(X,d,\mu)$ is called \emph{compact} if $(X,d)$ is compact, and \emph{fully supported} if $\spt \mu = X$. Henceforth, we will denote by $\lambda$ the Lebesgue measure on~$[0,1)$. A \emph{parametrization} of an $mm$-space $(X,d,\mu)$ is a Borel measurable map $\phi \colon [0,1) \to X$ such that $\phi_{\ast}(\lambda) = \mu$. It is well known that any $mm$-space admits a parametrization (see, e.g.,~\cite[Lemma~4.2]{ShioyaBook}).

\begin{definition}\label{definition:observable.distance} The \emph{observable distance} between two $mm$-spaces $X$ and $Y$ is defined to be \begin{displaymath}
	d_{\mathrm{conc}}(X,Y) \defeq \inf \{ (\mathrm{me}_{\lambda})_{\mathrm{H}}(\mathrm{Lip}_{1}(X) \circ \phi, \mathrm{Lip}_{1}(Y) \circ \psi) \mid \phi \text{ param.~of } X, \, \psi \text{ param.~of } Y \} .
\end{displaymath} A sequence of $mm$-spaces $(X_{n})_{n \in \mathbb{N}}$ is said to \emph{concentrate to} an $mm$-space $X$ if \begin{displaymath}
\lim\nolimits_{n \to \infty} d_{\mathrm{conc}}(X_{n},X) = 0 .
\end{displaymath} \end{definition}

It is known that the observable distance induces a metric on the set of isomorphism classes of $mm$-spaces, cf.~\cite[Theorem~5.16]{ShioyaBook}. In particular, two $mm$-spaces $X$ and $Y$ are \emph{isomorphic}, i.e., there exists an $mm$-space isomorphism between  $X$ and $Y$, if and only if $d_{\mathrm{conc}}(X,Y) = 0$. By an \emph{isomorphism} between $mm$-spaces $(X,d_{X},\mu_{X})$ and $(Y,d_{Y},\mu_{Y})$ we mean an isometry \begin{displaymath}
	f \colon \left(\spt \mu_{X},d_{X}\! \! \upharpoonright_{\spt \mu_{X}}\right) \to \left(\spt \mu_{Y},d_{Y}\! \! \upharpoonright_{\spt \mu_{Y}}\right)
\end{displaymath} such that $f_{\ast}\! \left(\mu_{X}\! \! \upharpoonright_{\spt \mu_{X}}\right) = \mu_{Y}\! \! \upharpoonright_{\spt \mu_{Y}}$. For our purposes, i.e., the proof of our Theorem~\ref{theorem:equivariant.concentration}, the following characterization of concentration will be useful.

\begin{thm}[\cite{ShioyaBook}, Corollary~5.35]\label{theorem:measure.concentration} A sequence of $mm$-spaces $(X_{n},d_{n},\mu_{n})_{n \in \mathbb{N}}$ concentrates to an $mm$-space $(X,d,\mu)$ if and only if there is a sequence of Borel maps $p_{n} \colon X_{n} \to X$ $(n \in \mathbb{N})$ such that \begin{enumerate}
	\item[$(1)$] $(p_{n})_{\ast}(\mu_{n}) \longrightarrow \mu$ in the weak topology as $n \to \infty$,
	\item[$(2)$] $(\mathrm{me}_{\mu_{n}})_{\mathrm{H}}(\mathrm{Lip}_{1}(X,d) \circ p_{n},\mathrm{Lip}_{1}(X_{n},d_{n})) \longrightarrow 0$ as $n \to \infty$.
\end{enumerate} \end{thm}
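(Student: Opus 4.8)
The statement is an equivalence, and the plan is to prove the two implications separately. Both rest on the elementary observation that a parametrization $\vartheta$ of an $mm$-space $(Y,d_{Y},\nu)$ pulls Borel functions back in a measure-preserving way, so that $\mathrm{me}_{\lambda}(u\circ\vartheta,v\circ\vartheta)=\mathrm{me}_{\nu}(u,v)$ for all Borel $u,v\colon Y\to\mathbb{R}$, and hence $(\mathrm{me}_{\lambda})_{\mathrm{H}}(\mathcal{A}\circ\vartheta,\mathcal{B}\circ\vartheta)=(\mathrm{me}_{\nu})_{\mathrm{H}}(\mathcal{A},\mathcal{B})$ for any families $\mathcal{A},\mathcal{B}$ of Borel real-valued functions on $Y$. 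I would also use throughout that $d_{\mathrm{conc}}$ satisfies the triangle inequality (\cite[Theorem~5.16]{ShioyaBook}).

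For the ``if'' direction, suppose Borel maps $p_{n}\colon X_{n}\to X$ with~(1) and~(2) are given, and put $\nu_{n}\defeq(p_{n})_{\ast}(\mu_{n})$. Composing any parametrization $\vartheta_{n}$ of $(X_{n},d_{n},\mu_{n})$ with $p_{n}$ yields a parametrization of $(X,d,\nu_{n})$, and since $\mathrm{Lip}_{1}(X,d)\circ(p_{n}\circ\vartheta_{n})=(\mathrm{Lip}_{1}(X,d)\circ p_{n})\circ\vartheta_{n}$, feeding this pair of parametrizations into the definition of the observable distance and applying the remark above gives
\begin{displaymath}
d_{\mathrm{conc}}\bigl(X_{n},(X,d,\nu_{n})\bigr)\ \leq\ (\mathrm{me}_{\mu_{n}})_{\mathrm{H}}\bigl(\mathrm{Lip}_{1}(X_{n},d_{n}),\,\mathrm{Lip}_{1}(X,d)\circ p_{n}\bigr),
\end{displaymath}
and the right-hand side tends to $0$ by~(2). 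It then remains to see that $d_{\mathrm{conc}}((X,d,\nu_{n}),(X,d,\mu))\to 0$, for which I would use that $(X,d)$ is separable and complete and $\nu_{n}\to\mu$ weakly by~(1): Skorokhod's representation theorem provides parametrizations $\vartheta_{n}'$ of $(X,d,\nu_{n})$ and $\vartheta'$ of $(X,d,\mu)$ with $\vartheta_{n}'(t)\to\vartheta'(t)$ for $\lambda$-almost every $t$, and Egorov's theorem upgrades this to uniform convergence of $d(\vartheta_{n}',\vartheta')$ off a set of arbitrarily small $\lambda$-measure, which bounds $(\mathrm{me}_{\lambda})_{\mathrm{H}}(\mathrm{Lip}_{1}(X,d)\circ\vartheta_{n}',\mathrm{Lip}_{1}(X,d)\circ\vartheta')$ for large $n$. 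The triangle inequality then gives $d_{\mathrm{conc}}(X_{n},X)\to 0$.

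For the ``only if'' direction -- the harder one -- suppose $d_{\mathrm{conc}}(X_{n},X)\to 0$. Unwinding the infimum defining $d_{\mathrm{conc}}$, one can fix parametrizations $\phi_{n}$ of $X_{n}$ and $\psi_{n}$ of $X$ with $\epsilon_{n}\defeq(\mathrm{me}_{\lambda})_{\mathrm{H}}(\mathrm{Lip}_{1}(X_{n},d_{n})\circ\phi_{n},\mathrm{Lip}_{1}(X,d)\circ\psi_{n})\to 0$. I would then form the coupling $\pi_{n}\defeq(\phi_{n},\psi_{n})_{\ast}(\lambda)\in\mathrm{P}(X_{n}\times X)$, whose marginals are $\mu_{n}$ and $\mu$, and disintegrate it as a Borel family $x\mapsto\pi_{n}^{x}\in\mathrm{P}(X)$ over $\mu_{n}$. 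The bound on $\epsilon_{n}$ means that for each $f\in\mathrm{Lip}_{1}(X,d)$ there is $g\in\mathrm{Lip}_{1}(X_{n},d_{n})$ with $\pi_{n}\{(x,y):|f(y)-g(x)|>\epsilon_{n}\}\leq\epsilon_{n}$, and Markov's inequality turns this into: for $\mu_{n}$-all but a $\sqrt{\epsilon_{n}}$-fraction of $x$, the fiber $\pi_{n}^{x}$ concentrates (with mass $\geq1-\sqrt{\epsilon_{n}}$) in an $\epsilon_{n}$-window of values of $f$. Testing against a slowly growing finite number of the functions $d(\cdot,z_{k})$ for a dense sequence $(z_{k})$ in $X$, and using tightness of $\mu$, I would extract $\delta_{n}\to 0$ and -- via a Kuratowski--Ryll-Nardzewski selection of a point about which $\pi_{n}^{x}$ concentrates -- a Borel map $p_{n}\colon X_{n}\to X$ such that $\pi_{n}^{x}(\overline{B}_{d}(p_{n}(x),\delta_{n}))\geq1-\delta_{n}$ for all $x$ outside a set of $\mu_{n}$-measure $\leq\delta_{n}$. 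Property~(1) would then follow by integrating the fiberwise estimate $\bigl\lvert f(p_{n}(x))-\int f\,d\pi_{n}^{x}\bigr\rvert\leq(1+2\Vert f\Vert_{\infty})\delta_{n}$ (valid off the exceptional set) against $\mu_{n}$, for bounded $f\in\mathrm{Lip}_{1}(X,d)$; and property~(2) would follow by transporting the two-sided $\mathrm{me}$-Hausdorff approximation between $\mathrm{Lip}_{1}(X,d)\circ\psi_{n}$ and $\mathrm{Lip}_{1}(X_{n},d_{n})\circ\phi_{n}$ along $\pi_{n}$ and invoking that $\pi_{n}^{x}$ is $\delta_{n}$-localized at $p_{n}(x)$.

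The main obstacle is this last construction: the quantitative passage from ``$\pi_{n}$ is $\mathrm{me}$-Hausdorff small'' to ``$\pi_{n}$ is essentially the graph of a measurable map $p_{n}$''. One must localize all the fibers $\pi_{n}^{x}$ simultaneously against infinitely many test functions while keeping both the exceptional $\mu_{n}$-mass and the localization radius $\delta_{n}$ tending to zero -- the familiar slowly-growing-net device, where tightness of $\mu$ enters -- and one must perform the selection so that $p_{n}$ is genuinely Borel. The ``if'' direction, by contrast, is routine once Skorokhod's theorem and the triangle inequality for $d_{\mathrm{conc}}$ are in hand.
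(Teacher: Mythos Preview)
The paper does not prove this statement at all: it is quoted verbatim as Corollary~5.35 of \cite{ShioyaBook} and then used as a black box in the proof of Theorem~\ref{theorem:equivariant.concentration}. There is therefore no in-paper argument to compare your proposal against.

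For what it is worth, your outline is reasonable and broadly in the spirit of Shioya's treatment. The ``if'' direction via composed parametrizations and Skorokhod's theorem is standard; the ``only if'' direction you sketch --- coupling via $(\phi_{n},\psi_{n})_{\ast}(\lambda)$, disintegration over $\mu_{n}$, a slowly-growing-net localization against countably many distance functions, and a measurable selection to produce $p_{n}$ --- is exactly where the work lies, and you have correctly identified both the mechanism and the delicate point (uniform localization of the fibers with vanishing error while keeping $p_{n}$ Borel). If you want to cite or reconstruct a proof, the reference is Shioya's monograph rather than the present paper.
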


\section{Topological groups and convergence to invariance}\label{section:invariance}

In this section we briefly recollect some results from~\cite{SchneiderThom} about UEB-convergence to invariance over topological groups. Throughout the present note, by a \emph{topological group} we will always mean a \emph{Hausdorff} topological group.

Let $X$ be a uniform space. Consider the commutative unital real Banach algebra $\mathrm{UCB}(X)$ of all bounded uniformly continuous real-valued functions on $X$ endowed with the supremum norm. The set $\mathrm{M}(X)$ of all \emph{means} on $\mathrm{UCB}(X)$, i.e., (necessarily continuous) positive unital linear maps from $\mathrm{UCB}(X)$ to $\mathbb{R}$, equipped with the weak-$\ast$ topology, i.e., the initial topology generated by the maps of the form $\mathrm{M}(X) \to \mathbb{R}, \, \mu \mapsto \mu (f)$ where $f \in \mathrm{UCB}(X)$, constitutes a compact Hausdorff space. The set $\mathrm{S}(X)$ of all (necessarily positive and linear) unital ring homomorphisms from $\mathrm{UCB}(X)$ into~$\mathbb{R}$ forms a closed subspace of $\mathrm{M}(X)$, which is called the \emph{Samuel compactification} of~$X$. The map $\eta_{X} \colon X \to \mathrm{S}(X)$ given by \begin{displaymath}
	\eta_{X}(x)(f) \defeq f(x) \qquad (x \in X, \, f \in \mathrm{UCB}(X)) 
\end{displaymath} is uniformly continuous and has dense range in $\mathrm{S}(X)$, and the mapping \begin{displaymath}
	\mathrm{C}(\mathrm{S}(X)) \, \longrightarrow \, \mathrm{UCB}(X), \qquad f \, \longmapsto \, f \circ \eta_{X}
\end{displaymath} is an isometric isomorphism of unital Banach algebras. Furthermore, a subset $H \subseteq \mathrm{UCB}(X)$ is called \emph{UEB} (short for \emph{uniformly equicontinuous bounded}) if $H$ is norm-bounded and \emph{uniformly equicontinuous}, i.e., for every $\epsilon > 0$ there exists an entourage $U$ of $X$ such that \begin{displaymath}
	\forall f \in H \ \forall (x,y) \in U \colon \qquad \vert f(x) - f(y) \vert \, \leq \, \epsilon .
\end{displaymath} The collection $\mathrm{UEB}(X)$ of all UEB subsets of $\mathrm{UCB}(X)$ constitutes a convex vector bornology on the vector space $\mathrm{UCB}(X)$. It is easily seen that a subset $H \subseteq \mathrm{UCB}(X)$ belongs to $\mathrm{UEB}(X)$ if and only if $H$ is norm-bounded and there is a uniformly continuous pseudo-metric $d$ on $X$ such that $H \subseteq \mathrm{Lip}_{1}(X,d)$. The \emph{UEB topology} on the continuous dual~$\mathrm{UCB}(X)^{\ast}$ is defined as the topology of uniform convergence on UEB subsets of $\mathrm{UCB}(X)$. This is a locally convex linear topology on the vector space $\mathrm{UCB}(X)^{\ast}$ containing the weak-$\ast$ topology, i.e., the initial topology generated by the maps $\mathrm{UCB}(X)^{\ast} \to \mathbb{R}, \, \mu  \mapsto \mu (f)$ where $f \in \mathrm{UCB}(X)$. For more details on the UEB topology, the reader is referred to~\cite{PachlBook}.

Now let $G$ be a topological group. Denote by $\mathcal{U}(G)$ the neighborhood filter of the neutral element in $G$ and endow $G$ with its \emph{right uniformity} defined by the basic entourages \begin{displaymath}
	\left\{ (x,y) \in G \times G \left\vert \, yx^{-1} \in U \right\} \qquad (U \in \mathcal{U}(G)) . \right.
\end{displaymath} Referring to the right uniformity, we denote by $\mathrm{RUCB}(G)$ the set of all bounded uniformly continuous real-valued function on $G$ and by $\mathrm{RUEB}(G)$ the set of all UEB subsets of $\mathrm{RUCB}(G)$. It is easily seen that a subset $H \subseteq \mathrm{RUCB}(G)$ belongs to $\mathrm{RUEB}(G)$ if and only if $H$ is norm-bounded and there is a continuous right-invariant pseudo-metric $d$ on $G$ with $H \subseteq \mathrm{Lip}_{1}(G,d)$. Furthermore, for $g \in G$, we let $\lambda_{g} \colon G \to G, \, x \mapsto gx$ and $\rho_{g} \colon G \to G, \, x \mapsto xg$. We note that $G$ acts continuously on $\mathrm{M}(G)$ by \begin{displaymath}
	(g\mu)(f) \defeq \mu (f \circ \lambda_{g}) \qquad (g \in G, \, \mu \in \mathrm{M}(G), \, f \in \mathrm{RUCB}(G)) ,
\end{displaymath} and that $\mathrm{S}(G)$ constitutes a $G$-invariant subspace of $\mathrm{M}(G)$. Let us recall that $G$ is \emph{amenable} (resp., \emph{extremely amenable}) if $\mathrm{M}(G)$ (resp., $\mathrm{S}(G)$) admits a $G$-fixed point. It is well known that $G$ is amenable (resp., extremely amenable) if and only if every $G$-flow admits a $G$-invariant regular Borel probability measure (resp., a $G$-fixed point). For a more comprehensive account on (extreme) amenability of general topological groups, we refer to~\cite{PestovBook}.

We will need a characterization of amenability in terms of almost invariant finitely supported probability measures from~\cite{SchneiderThom}.

\begin{definition} Let $G$ be a topological group. A net $(\mu_{i})_{i \in I}$ of Borel probability measures on $G$ is said to \emph{UEB-converge to invariance (over $G$)} if \begin{displaymath}
	\forall g \in G \ \forall H \in \mathrm{RUEB}(G) \colon \qquad \sup\nolimits_{f \in H} \left\lvert \int f \circ \lambda_{g} \, d\mu_{i} - \int f \, d\mu_{i} \right\rvert \longrightarrow 0 \quad (i \to I) .
\end{displaymath} \end{definition}

\begin{thm}[\cite{SchneiderThom}, Theorem~3.2]\label{theorem:topological.day} A topological group is amenable if and only if it admits a net of (finitely supported regular) Borel probability measures UEB-converging to invariance. \end{thm}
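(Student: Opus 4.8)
The plan is to prove the two implications separately. The forward direction (an almost invariant net forces amenability) is soft: viewing each $\mu_{i}$ as a mean on $\mathrm{RUCB}(G)$ through $f\mapsto\int f\,d\mu_{i}$, the net $(\mu_{i})_{i\in I}$ sits in the weak-$\ast$ compact space $\mathrm{M}(G)$; let $m$ be a weak-$\ast$ cluster point along a subnet. Each singleton $\{f\}$ with $f\in\mathrm{RUCB}(G)$ is a UEB set, so UEB-convergence to invariance gives $\int f\circ\lambda_{g}\,d\mu_{i}-\int f\,d\mu_{i}\to 0$ for all $g\in G$; since $G$ acts continuously on $\mathrm{M}(G)$, passing to the subnet yields $(gm)(f)=m(f)$, i.e. $gm=m$ for every $g\in G$. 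Thus $\mathrm{M}(G)$ has a $G$-fixed point and $G$ is amenable. (Only weak-$\ast$, not UEB, convergence is used here.)

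For the converse, assume $G$ amenable and fix a $G$-fixed point of $\mathrm{M}(G)$; under $\mathrm{RUCB}(G)\cong\mathrm{C}(\mathrm{S}(G))$ and the Riesz representation theorem this is a regular Borel probability measure $\nu$ on $\mathrm{S}(G)$ invariant under the (separately continuous) $G$-action that is dual to $f\mapsto f\circ\lambda_{g}$ and satisfies $\eta_{G}(gx)=g\,\eta_{G}(x)$. A routine net-refinement reduces the statement to the following: for every finite $F\subseteq G$, every $H\in\mathrm{RUEB}(G)$ and every $\varepsilon>0$, there is a finitely supported probability measure $\mu$ on $G$ with $\sup_{f\in H}\lvert\int f\circ\lambda_{g}\,d\mu-\int f\,d\mu\rvert<\varepsilon$ for all $g\in F$; one then indexes the net by such triples, ordered by refinement, using that $\mathrm{RUEB}(G)$ is closed under finite unions and that a finitely supported measure on a Hausdorff space is automatically regular. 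Moreover, as every $H\in\mathrm{RUEB}(G)$ lies in $\mathrm{Lip}_{1}^{r}(G,d)$ for some $r\geq 0$ and some continuous right-invariant pseudo-metric $d$ -- which only enlarges the seminorm $\xi\mapsto\sup_{f\in H}\lvert\xi(f)\rvert$ on measures -- we may assume $H=\mathrm{Lip}_{1}^{r}(G,d)$.

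The heart of the converse is to push the problem onto a \emph{compact metric} space. Extend $d$ to a continuous pseudo-metric $\bar d$ on $\mathrm{S}(G)$; continuous on the compact space $\mathrm{S}(G)$, it is bounded, hence $(\mathrm{S}(G),\bar d)$ is totally bounded and its metric quotient $(Z,\rho)$ is a compact metric space, with continuous quotient map $\pi\colon\mathrm{S}(G)\to Z$. Set $c_{e}\defeq\pi\circ\eta_{G}\colon G\to Z$ and $c_{g}\defeq c_{e}\circ\lambda_{g}$; a direct check gives $\mathrm{Lip}_{1}^{r}(G,d)=\{\hat f\circ c_{e}\mid\hat f\in\mathrm{Lip}_{1}^{r}(Z,\rho)\}$, so for finitely supported $\mu$ the quantity to be bounded is $\sup_{\hat f\in\mathrm{Lip}_{1}^{r}(Z,\rho)}\lvert\int\hat f\,d(c_{g})_{\ast}\mu-\int\hat f\,d(c_{e})_{\ast}\mu\rvert$, which for any $\omega\in\mathrm{P}(Z)$ is at most $W_{1}((c_{g})_{\ast}\mu,\omega)+W_{1}(\omega,(c_{e})_{\ast}\mu)$, $W_{1}$ being the Kantorovich--Rubinstein distance, which metrizes the weak topology on $\mathrm{P}(Z)$ since $(Z,\rho)$ is compact. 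Now put $\omega\defeq\pi_{\ast}\nu$ and push $\nu$ forward along $\Psi\colon\mathrm{S}(G)\to Z^{F\cup\{e\}}$, $p\mapsto(\pi(gp))_{g\in F\cup\{e\}}$, getting $\Omega\defeq\Psi_{\ast}\nu$. By $G$-invariance of $\nu$ every marginal of $\Omega$ equals $\pi_{\ast}(g_{\ast}\nu)=\pi_{\ast}\nu=\omega$; and since $\Psi\circ\eta_{G}=(c_{g})_{g}=:\Theta$ and $\eta_{G}(G)$ is dense in $\mathrm{S}(G)$, $\Omega$ is supported on $\overline{\Theta(G)}$, hence lies in $\mathrm{P}(\overline{\Theta(G)})$, which equals the weak closure of $\{\Theta_{\ast}\mu\mid\mu\text{ finitely supported on }G\}$. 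Therefore some finitely supported $\mu$ makes $\Theta_{\ast}\mu$ weakly arbitrarily close to $\Omega$; then each marginal $(c_{g})_{\ast}\mu$ is $W_{1}$-close to $\omega$, and the required estimate follows. Reassembling the net over the triples $(F,H,\varepsilon)$ finishes the proof.

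I expect the main obstacle to be exactly this last, non-formal ``upgrade'' from weak to UEB control. The tempting shortcut -- noting that the invariant mean already puts $0$ into the weak-$\ast$ closure of the convex set $\{(g\theta-\theta)_{g\in F}\mid\theta\text{ finitely supported}\}$ in $\mathrm{RUCB}(G)^{\ast}$, and invoking Mazur's theorem to conclude $0$ lies in its UEB-closure -- does not work, because the UEB topology on $\mathrm{RUCB}(G)^{\ast}$ is strictly finer than any topology compatible with the duality $\langle\mathrm{RUCB}(G)^{\ast},\mathrm{RUCB}(G)\rangle$ (for discrete $G$ it is the norm topology, whose dual is the whole bidual), so $\mathrm{RUCB}(G)$ is not the full dual and Mazur yields nothing. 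The substance of the argument is precisely the realization that, for a fixed pseudo-metric $d$, one can localize to the compact metric quotient $(Z,\rho)$ of the Samuel compactification cut out by $d$, where the weak topology on probability measures \emph{is} Kantorovich-metrizable and where the $G$-invariant measure on $\mathrm{S}(G)$ furnishes a measure on $Z^{F}$ with all marginals equal and concentrated on the closure of the image of $G$; the remaining checks -- that $d$ extends to $\bar d$ and descends to $Z$, that $\mathrm{Lip}_{1}^{r}(G,d)$ is exactly $\mathrm{Lip}_{1}^{r}(Z,\rho)\circ c_{e}$, and the marginal and support properties of $\Psi_{\ast}\nu$ -- are then routine.
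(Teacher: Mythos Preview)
The paper does not prove this statement: Theorem~\ref{theorem:topological.day} is quoted verbatim from~\cite{SchneiderThom} without proof, so there is no argument in the present paper to compare your attempt against. I can therefore only comment on the soundness of your proposal itself.

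Your forward direction is correct and standard: a weak-$\ast$ cluster point of the net in $\mathrm{M}(G)$ is a left-invariant mean, since UEB-convergence to invariance certainly implies $\int f\circ\lambda_{g}\,d\mu_{i}-\int f\,d\mu_{i}\to 0$ for each individual $f\in\mathrm{RUCB}(G)$.

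The converse, however, contains a genuine gap at the step ``extend $d$ to a continuous pseudo-metric $\bar d$ on $\mathrm{S}(G)$; \dots hence $(\mathrm{S}(G),\bar d)$ is totally bounded and its metric quotient $(Z,\rho)$ is a compact metric space''. A bounded continuous right-invariant pseudo-metric on $G$ is right-uniformly continuous as a map $G\times G\to\mathbb{R}$, but its canonical extension lands in $\mathrm{S}(G\times G)$, not in $\mathrm{S}(G)\times\mathrm{S}(G)$, and in general does \emph{not} descend to a continuous pseudo-metric on $\mathrm{S}(G)$. Concretely, take $G=\mathbb{Z}$ with the discrete topology and $d(m,n)=\min\{|m-n|,1\}$. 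The metric quotient of $(G,d)$ is $\mathbb{Z}$ itself, which is complete but not compact, and no continuous pseudo-metric on $\beta\mathbb{Z}=\mathrm{S}(\mathbb{Z})$ restricts to $d$: the functions $d(\cdot,g)=1-\delta_{g}$ satisfy $\lVert d(\cdot,g)-d(\cdot,h)\rVert_{\infty}=1$ for $g\neq h$, so the family $\{\widehat{d(\cdot,g)}:g\in\mathbb{Z}\}$ is not equicontinuous on $\beta\mathbb{Z}$, and the putative $\bar d$ cannot be jointly continuous there. Equivalently, $\mathrm{Lip}_{1}^{r}(G,d)$ is in general \emph{not} relatively norm-compact in $\mathrm{RUCB}(G)\cong\mathrm{C}(\mathrm{S}(G))$, which is what continuity of $\bar d$ would force via Arzel\`a--Ascoli.

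This breaks the rest of your argument: without compactness of $Z$ you lose both the identification of $W_{1}$ with the weak topology on $\mathrm{P}(Z)$ and, more fundamentally, the passage from the invariant mean on $\mathrm{RUCB}(G)$ to a genuine Borel probability measure $\omega$ on $Z$ (a mean on $\mathrm{UCB}(Z,\rho)$ need not be countably additive when $Z$ is not compact). Your diagnosis that the weak-$\ast$-to-UEB upgrade is the crux is exactly right, and your observation that a naive Mazur argument fails is also correct; but the proposed compact-metric-quotient workaround does not go through as written. The proof in~\cite{SchneiderThom} handles this upgrade by a different route; if you want to repair your approach, the natural move is to look for an auxiliary $G$-flow on which $d$ \emph{does} induce a continuous pseudo-metric (rather than trying to force this on $\mathrm{S}(G)$), or to argue via an $\ell^{1}$-type predual where a Mazur/Hahn--Banach argument becomes available.
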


We note some elementary properties of UEB-convergence to invariance.

\begin{lem}\label{lemma:translated.convergence} Let $G$ be a topological group. Let $(\mu_{i})_{i \in I}$ be a net of Borel probability measures UEB-converging to invariance over $G$. The following hold. \begin{itemize}
	\item[$(1)$] For any $(g_{i})_{i \in I} \in G^{I}$, the net $((\rho_{g_{i}})_{\ast}(\mu_{i}))_{i \in I}$ UEB-converges to invariance over $G$.
	\item[$(2)$] If $\phi \colon G \to H$ is a continuous homomorphism with dense range in a topological group~$H$, then $(\phi_{\ast}(\mu_{i}))_{i \in I}$ UEB-converges to invariance over $H$.
	\item[$(3)$] For each $i \in I$, let $C_{i}$ be a Borel subset of $G$ such that $\mu_{i}(C_{i}) = 1$. Then $\bigcup_{i \in I} C_{i}C_{i}^{-1}$ is dense in $G$.
\end{itemize} \end{lem}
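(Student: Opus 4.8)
The plan is to check conditions (1), (2) and (3) directly against the definition of UEB-convergence to invariance; in each case the only real work is to recognise, starting from a prescribed UEB family, an auxiliary UEB family to which the hypothesis on $(\mu_{i})_{i\in I}$ may be applied, after which a change of variables closes the argument. For (1), fix $g\in G$ and $H\in\mathrm{RUEB}(G)$, and choose a continuous right-invariant pseudo-metric $d$ on $G$ with $H\subseteq\mathrm{Lip}_{1}(G,d)$ and $M\defeq\sup_{f\in H}\Vert f\Vert_{\infty}<\infty$. Right-invariance of $d$ gives $f\circ\rho_{a}\in\mathrm{Lip}_{1}(G,d)$ and $\Vert f\circ\rho_{a}\Vert_{\infty}=\Vert f\Vert_{\infty}$ for every $a\in G$, so $H'\defeq\{f\circ\rho_{g_{j}}\mid f\in H,\ j\in I\}$ is again norm-bounded and contained in $\mathrm{Lip}_{1}(G,d)$, i.e. $H'\in\mathrm{RUEB}(G)$. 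Since $(f\circ\lambda_{g})\circ\rho_{g_{i}}=(f\circ\rho_{g_{i}})\circ\lambda_{g}$, the push-forward formula yields $\int f\circ\lambda_{g}\,d((\rho_{g_{i}})_{\ast}\mu_{i})-\int f\,d((\rho_{g_{i}})_{\ast}\mu_{i})=\int(f\circ\rho_{g_{i}})\circ\lambda_{g}\,d\mu_{i}-\int(f\circ\rho_{g_{i}})\,d\mu_{i}$, and taking absolute values and the supremum over $f\in H$ bounds the left-hand side by $\sup_{h\in H'}\lvert\int h\circ\lambda_{g}\,d\mu_{i}-\int h\,d\mu_{i}\rvert$, which tends to $0$ by the hypothesis applied to $g$ and $H'$.

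For (2), fix $h\in H$ and $K\in\mathrm{RUEB}(H)$, pick a continuous right-invariant pseudo-metric $d_{H}$ on $H$ with $K\subseteq\mathrm{Lip}_{1}(H,d_{H})$, and note that the pulled-back pseudo-metric $d_{G}\defeq d_{H}\circ(\phi\times\phi)$ is continuous and right-invariant on $G$ (this is where $\phi$ being a continuous homomorphism is used) and that $\{f\circ\phi\mid f\in K\}$ is norm-bounded and contained in $\mathrm{Lip}_{1}(G,d_{G})$, hence lies in $\mathrm{RUEB}(G)$. Applying the hypothesis to an arbitrary $g\in G$ and this family, together with the change of variables $\int\psi\,d(\phi_{\ast}\mu_{i})=\int\psi\circ\phi\,d\mu_{i}$, settles the case $h\in\phi(G)$. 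For general $h$, fix $\epsilon>0$; since $d_{H}$ is continuous, $B_{d_{H}}(e_{H},\epsilon)h$ is an open neighbourhood of $h$, so by density of $\phi(G)$ there is $g\in G$ with $\phi(g)\in B_{d_{H}}(e_{H},\epsilon)h$, and then right-invariance gives $d_{H}(h,\phi(g))=d_{H}(\phi(g)h^{-1},e_{H})<\epsilon$, hence $\lvert f(h\phi(x))-f(\phi(gx))\rvert\le d_{H}(h\phi(x),\phi(g)\phi(x))=d_{H}(h,\phi(g))<\epsilon$ for all $f\in K$, $x\in G$ and $i\in I$. Combining this estimate, uniform in $i$, with the convergence already proved for $\phi(g)$, the limit superior over $i$ of $\sup_{f\in K}\lvert\int f\circ\lambda_{h}\,d(\phi_{\ast}\mu_{i})-\int f\,d(\phi_{\ast}\mu_{i})\rvert$ is at most $\epsilon$; letting $\epsilon\to0$ proves (2).

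For (3), argue by contradiction: if some $g\in G$ lies outside the closure of $\bigcup_{i\in I}C_{i}C_{i}^{-1}$, choose a symmetric open neighbourhood $W$ of the identity with $WgW\cap C_{i}C_{i}^{-1}=\varnothing$ for all $i$, and --- by the standard construction of continuous right-invariant pseudo-metrics subordinate to a prescribed identity neighbourhood --- a continuous right-invariant pseudo-metric $d$ on $G$ with $B_{d}(e,1)\subseteq W$. Using $B_{d}(gx,1)=B_{d}(e,1)gx\subseteq Wgx$ and right-invariance of $d$, one sees that $yx^{-1}\in Wg\subseteq WgW$ whenever $x,y\in C_{i}$ satisfy $d(gx,y)<1$, which is impossible; hence $d(gx,y)\ge1$ for all $x,y\in C_{i}$, so the functions $f_{i}\defeq\min(1,d(\cdot,C_{i}))$ satisfy $f_{i}\circ\lambda_{g}\equiv1$ on $C_{i}$ while $f_{i}\equiv0$ on $C_{i}$. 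As $\mu_{i}(C_{i})=1$, this forces $\int f_{i}\circ\lambda_{g}\,d\mu_{i}=1$ and $\int f_{i}\,d\mu_{i}=0$. But every $f_{i}$ lies in the single UEB family $H\defeq\mathrm{Lip}_{1}^{1}(G,d)\in\mathrm{RUEB}(G)$, so $\sup_{f\in H}\lvert\int f\circ\lambda_{g}\,d\mu_{i}-\int f\,d\mu_{i}\rvert\ge1$ for every $i$, contradicting UEB-convergence to invariance applied to $g$ and $H$.

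The step I expect to need the most care is (3): mere weak-$\ast$ convergence to invariance would not suffice there, because the separating function $f_{i}$ must be allowed to vary with $i$, so the crux is to house all the $f_{i}$ inside one fixed UEB family, which is exactly what forces the use of the (pseudo-metric form of the) Birkhoff--Kakutani metrization theorem to produce a continuous right-invariant pseudo-metric adapted to a prescribed neighbourhood of the identity. By comparison, (1) and (2) amount to bookkeeping once the auxiliary pseudo-metrics $d$ and $d_{G}=d_{H}\circ(\phi\times\phi)$ have been written down, the only mildly delicate point being the density-plus-uniform-equicontinuity passage from $h\in\phi(G)$ to an arbitrary $h\in H$ in (2).
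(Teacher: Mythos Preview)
Your proof is correct and follows essentially the same lines as the paper's. The only cosmetic difference is in part~(3), where the paper argues directly (showing every $gU$ meets some $C_{i}C_{i}^{-1}$) using Urysohn-type test functions $f_{S}(x)=\sup_{s\in S}f(xs^{-1})$ rather than by contradiction with your $f_{i}=\min(1,d(\,\cdot\,,C_{i}))$, but the decisive idea --- housing all the $i$-dependent test functions inside one fixed RUEB family --- is identical.
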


\begin{proof} (1) Let $(g_{i})_{i \in I} \in G^{I}$. Consider any $H \in \mathrm{RUEB}(G)$. Then it is straightforward to check that $\{ f \circ \rho_{g_{j}} \mid f \in F, \, j \in I \} \in \mathrm{RUEB}(G)$. Hence, for every $g \in G$, \begin{displaymath}
	\sup\nolimits_{(f,j) \in F \times I} \left\lvert \int f \circ \rho_{g_{j}} \, d\mu_{i} - \int f \circ \rho_{g_{j}} \circ \lambda_{g} \, d\mu_{i} \right\rvert \, \longrightarrow \, 0 \quad (i \to I) 
\end{displaymath} and, in particular, \begin{align*}
	\sup\nolimits_{f \in F} \left\lvert \int f \, d(\rho_{g_{i}})_{\ast}(\mu_{i}) \right. &\left. - \int f \circ \lambda_{g} \, d(\rho_{g_{i}})_{\ast}(\mu_{i}) \right\rvert \, = \, \sup\nolimits_{f \in F} \left\lvert \int f \circ \rho_{g_{i}} \, d\mu_{i} -\int f \circ \lambda_{g} \circ \rho_{g_{i}} \, d\mu_{i} \right\rvert \\
	&= \, \sup\nolimits_{f \in F} \left\lvert \int f \circ \rho_{g_{i}} \, d\mu_{i} -\int f \circ \rho_{g_{i}} \circ \lambda_{g} \, d\mu_{i} \right\rvert \, \longrightarrow \, 0 \quad (i \to I) .
\end{align*}
	
(2) Let $h \in H$ and $F \in \mathrm{RUEB}(H)$. We wish to show that \begin{displaymath}
	\sup\nolimits_{f \in F} \left\lvert \int f \, d\phi_{\ast}(\mu_{i}) - \int f \circ \lambda_{h} \, d \phi_{\ast}(\mu_{i}) \right\rvert \, \longrightarrow \, 0 \quad (i \to I) .
\end{displaymath} Let $\epsilon > 0$. Since $F$ belongs to $\mathrm{RUEB}(H)$, there exists $U \in \mathcal{U}(H)$ such that $\Vert f - (f \circ \lambda_{u}) \Vert_{\infty} \leq \tfrac{\epsilon}{2}$ for all $f \in F$ and $u \in U$. Due to $\phi (G)$ being dense in $H$, there exists $g \in G$ with $\phi (g) \in Uh$. As $\phi \colon G \to H$ is uniformly continuous with regard to the respective right uniformities, it follows that $F\circ \phi \in \mathrm{RUEB}(G)$. Hence, we find $i _{0} \in I$ such that \begin{displaymath}
	\forall i \in I , \, i \geq i_{0} \colon \quad \sup\nolimits_{f \in F} \left\lvert \int f \circ \phi \, d\mu_{i} - \int f \circ \phi \circ \lambda_{g} \, d\mu_{i} \right\rvert \, \leq \, \tfrac{\epsilon}{2} . 
\end{displaymath} For every $i \in I$ with $i \geq i_{0}$, we conclude that \begin{align*}
	\left\lvert \int f \,d\phi_{\ast}(\mu_{i}) \right. &\left. - \int f \circ \lambda_{h} \, d\phi_{\ast}(\mu_{i}) \right\rvert \, = \, \left\lvert \int f \circ \phi \, d\mu_{i} -\int f \circ \lambda_{h} \, d\phi_{\ast}(\mu_{i}) \right\rvert \\
	&\leq \, \left\lvert \int f \circ \phi \, d\mu_{i} -\int f \circ \phi \circ \lambda_{g} \, d\mu_{i} \right\rvert + \left\lvert \int f \circ \lambda_{\phi (g)} \, d\phi_{\ast}(\mu_{i}) -\int f \circ \lambda_{h} \, d\phi_{\ast}(\mu_{i}) \right\rvert \\
	&\leq \, \tfrac{\epsilon}{2} + \left\lVert \left(f \circ \lambda_{\phi (g)}\right) - \left(f \circ \lambda_{h}\right) \right\rVert_{\infty} \, = \, \tfrac{\epsilon}{2} + \left\lVert f - \left(f \circ \lambda_{\phi (g)h^{-1}}\right) \right\rVert_{\infty} \, \leq \, \epsilon 
\end{align*} for all $f \in F$, i.e., $\sup_{f \in H} \left\lvert \int f \, d\phi_{\ast}(\mu_{i}) -\int f \circ \lambda_{h} \, d\phi_{\ast}(\mu_{i}) \right\rvert \leq \epsilon$ as desired.

(3) Let $C \defeq \bigcup_{i \in I} C_{i}C_{i}^{-1}$. Consider any $g \in G$ and $U \in \mathcal{U}(G)$. We are going to show that $gU \cap C \ne \emptyset$. By Urysohn's lemma for uniform spaces, there exists a right-uniformly continuous function $f \colon G \to [0,1]$ such that $f(e) = 1$ and $f(x) = 0$ whenever $x \in G\setminus U$. For every subset $S \subseteq G$, define $f_{S} \colon G \to [0,1]$ by \begin{displaymath}
	f_{S}(x) \defeq \sup\nolimits_{s \in S} f\! \left(xs^{-1}\right) \qquad (x \in G) .
\end{displaymath} It is straightforward to check that the set $\{ f_{S} \mid S \subseteq G \}$ belongs to $\mathrm{RUEB}(G)$. Since $(\mu_{i})_{i \in I}$ UEB-converges to invariance over $G$, there exists $i_{0} \in I$ such that \begin{displaymath}
	\forall i \in I, \, i \geq i_{0} \colon \quad \sup\nolimits_{S \subseteq G} \left\lvert \int f_{S} \, d\mu_{i} - \int f_{S} \circ \lambda_{g^{-1}} \, d\mu_{i} \right\rvert \, \leq \, \tfrac{1}{2} .
\end{displaymath} Let $i \in I$. We observe that $\int f_{C_{i}} \, d\mu_{i} = 1$, as $\mu_{i}(C_{i}) = 1$ and $C_{i} \subseteq f_{C_{i}}^{-1}(1)$. Hence, if $i \geq i_{0}$, then $\int f_{C_{i}} \circ \lambda_{g^{-1}} \, d\mu_{i} \geq \tfrac{1}{2}$ and so $\left(f_{C_{i}} \circ \lambda_{g^{-1}}\right)\vert_{C_{i}} \neq \mathbf{0}$, thus $gUC_{i} \cap C_{i} \ne \emptyset$. This entails that $gU \cap C \ne \emptyset$. Consequently, $C$ is dense in $G$. \end{proof}

Let us note the following consequence of Lemma~\ref{lemma:translated.convergence}(3).

\begin{cor}\label{corollary:second.countable} If a metrizable topological group $G$ admits a sequence $(\mu_{n})_{n \in \mathbb{N}}$ of Borel probability measures UEB-converging to invariance such that, for each $n \in \mathbb{N}$, there is a compact subset $C_{n} \subseteq G$ with $\mu_{n}(C_{n}) = 1$, then $G$ is separable. \end{cor}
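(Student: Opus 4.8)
The plan is to obtain the statement as a quick consequence of Lemma~\ref{lemma:translated.convergence}(3), combined with the elementary fact that compact subsets of metrizable spaces are separable. The sequence $(\mu_{n})_{n \in \mathbb{N}}$ UEB-converges to invariance over $G$ and satisfies $\mu_{n}(C_{n}) = 1$ for every $n \in \mathbb{N}$, so Lemma~\ref{lemma:translated.convergence}(3), applied along the index set $\mathbb{N}$, tells us that $C \defeq \bigcup_{n \in \mathbb{N}} C_{n}C_{n}^{-1}$ is dense in $G$.

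Next I would note that for each $n \in \mathbb{N}$ the set $C_{n}C_{n}^{-1}$ is compact, being the image of the compact space $C_{n} \times C_{n}$ under the continuous map $G \times G \to G,\ (x,y) \mapsto xy^{-1}$. Since $G$ is metrizable, every compact subspace of $G$ is second countable, hence separable; for each $n$ choose a countable set $D_{n} \subseteq C_{n}C_{n}^{-1}$ dense in $C_{n}C_{n}^{-1}$. Then $D \defeq \bigcup_{n \in \mathbb{N}} D_{n}$ is a countable subset of $C$ with $\overline{D} \supseteq C$, and therefore $\overline{D} \supseteq \overline{C} = G$. Thus $G$ admits a countable dense subset, i.e., $G$ is separable.

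There is essentially no genuine obstacle here: the entire content lies in invoking Lemma~\ref{lemma:translated.convergence}(3). The only points requiring (minimal) care are that each $C_{n}C_{n}^{-1}$ is actually compact — not merely $\sigma$-compact — so that it is separable, and that metrizability of $G$ is what lets us pass from a dense subset possessing its own countable dense subset to separability of $G$ itself, a step that would fail for general (non-metrizable) topological groups.
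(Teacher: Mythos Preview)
Your proof is correct and follows exactly the route the paper indicates: the corollary is stated there without proof, introduced simply as ``the following consequence of Lemma~\ref{lemma:translated.convergence}(3)'', and you have supplied precisely the routine details. One small correction to your closing commentary: density is transitive in \emph{any} topological space, so metrizability is not needed for that final step but rather earlier, to guarantee that each compact set $C_{n}C_{n}^{-1}$ is separable.
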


For metrizable topological groups, one may reformulate UEB-convergence to invariance in terms of mass transportation distances over compatible right-invariant metrics, see Corollary~\ref{corollary:wasserstein.convergence.to.inavriance}. This will be a consequence of the following fact about metrizable uniformities.

\begin{lem}\label{lemma:lipschitz.approximation} Let $(X,d)$ be a metric space and $H \in \mathrm{UEB}(X,d)$. For every $\epsilon > 0$, \begin{displaymath}
	\exists \ell \geq 1 \ \forall f \in H \ \exists f' \in \mathrm{Lip}_{\ell}^{\ell}(X,d) \colon \qquad \left\lVert f - f' \right\rVert_{\infty} \leq  \epsilon .
\end{displaymath} \end{lem}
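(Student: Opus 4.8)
The plan is to approximate each $f \in H$ by its Lipschitz regularization from below, using a single slope $\ell$ that depends only on $\epsilon$ and on the UEB data of $H$. Concretely: since $H \in \mathrm{UEB}(X,d)$, there is $r \geq 0$ with $\Vert f \Vert_{\infty} \leq r$ for all $f \in H$, and, as $H$ is uniformly equicontinuous, the given $\epsilon > 0$ yields $\delta > 0$ such that $d(x,y) < \delta$ implies $\vert f(x)-f(y)\vert \leq \epsilon$ for every $f \in H$. I would then put $\ell \defeq \max\{1, r, 2r/\delta\}$ and, for each $f \in H$, define $f' \colon X \to \mathbb{R}$ by
\[
	f'(x) \defeq \inf\nolimits_{y \in X} \bigl( f(y) + \ell\, d(x,y) \bigr) \qquad (x \in X) .
\]

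First I would verify the structural properties of $f'$. For fixed $y \in X$ the map $x \mapsto f(y) + \ell\, d(x,y)$ lies in $\mathrm{Lip}_{\ell}(X,d)$ by the triangle inequality, and since $f \geq -r$ the infimum defining $f'(x)$ is $\geq -r$, hence finite; therefore $f'$ is the pointwise infimum of a nonempty family of $\ell$-Lipschitz functions that is bounded below, so $f' \in \mathrm{Lip}_{\ell}(X,d)$. Choosing $y = x$ gives $f'(x) \leq f(x) \leq r$, and combined with $f' \geq -r$ this yields $\Vert f' \Vert_{\infty} \leq r \leq \ell$, so in fact $f' \in \mathrm{Lip}_{\ell}^{\ell}(X,d)$.

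It then remains to estimate $\Vert f - f' \Vert_{\infty}$. The inequality $f' \leq f$ is immediate. For the reverse, fix $x \in X$; I need $f(x) \leq f(y) + \ell\, d(x,y) + \epsilon$ for all $y \in X$. If $d(x,y) < \delta$, uniform equicontinuity gives $f(x) - f(y) \leq \epsilon$. If $d(x,y) \geq \delta$, then $f(x) - f(y) \leq 2r \leq (2r/\delta)\,\delta \leq \ell\, d(x,y)$ by the choice of $\ell$. In either case $f(x) \leq f(y) + \ell\, d(x,y) + \epsilon$; taking the infimum over $y$ gives $f(x) - f'(x) \leq \epsilon$, and hence $\Vert f-f'\Vert_{\infty} \leq \epsilon$.

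The only subtle point is the uniform choice of $\ell$: the norm bound $r$ and the modulus $\delta$ are common to all $f \in H$ precisely because $H$ is UEB, and it is this uniformity --- rather than any completeness or separability of $(X,d)$ --- that makes one $\ell$ suffice for the whole family. I do not expect any genuine obstacle beyond this bookkeeping.
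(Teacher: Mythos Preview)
Your proof is correct and follows essentially the same approach as the paper: both use the McShane-type infimal convolution $f'(x)=\inf_{y}\bigl(f(y)+\ell\,d(x,y)\bigr)$ with a slope $\ell$ chosen from the common norm bound and the common modulus of continuity of $H$, and both verify $\Vert f-f'\Vert_{\infty}\leq\epsilon$ by the same near/far case split on $d(x,y)$ versus $\delta$. The only cosmetic differences are that the paper first translates $H$ to nonnegative functions and takes $\ell=\max\{(s+\epsilon)/\delta,\,s+1\}$, whereas you avoid the translation and take $\ell=\max\{1,r,2r/\delta\}$; neither change affects the argument.
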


\begin{proof} Upon translating $H$ by a suitable constant function, we may and will assume that $f \geq 0$ for all $f \in H$. Put $s \defeq \sup_{f \in H} \Vert f \Vert_{\infty}$. Let $\epsilon \in (0,1]$. Since $H$ is uniformly equicontinuous, we find $\delta > 0$ such that \begin{displaymath}
	\forall f \in H \ \forall x,y \in X \colon \qquad d(x,y) < \delta\, \Longrightarrow \, \vert f(x) -f(y) \vert \leq \epsilon .
\end{displaymath} Let $k \defeq \tfrac{s + \epsilon}{\delta}$ and $\ell \defeq \max \{ k, s+1 \}$. For each $f \in H$, define $f_{k} \colon X \to \mathbb{R}$ by \begin{displaymath}
	f_{k}(x) \defeq \inf\nolimits_{y \in X} f(y) + kd(x,y) \qquad (x \in X) .
\end{displaymath} Note that $f_{k} \colon (X,d) \to \mathbb{R}$ is $k$-Lipschitz for every $f \in H$. Let us prove that $\Vert f - f_{k} \Vert_{\infty} \leq \epsilon$ for all $f \in H$. For this purpose, let $f \in H$. Since $f_{k} \leq f$, it suffices to show that $f_{k} \geq f - \epsilon$.  To this end, let $x \in X$. For each $y \in X$, either $f(y) \geq f(x) - \epsilon$ and therefore \begin{displaymath}
	f(y) + kd(x,y) \, \geq \, f(y) \, \geq \, f(x) -\epsilon ,
\end{displaymath} or $f(y) < f(x) - \epsilon$ and thus $d(x,y) \geq \delta$, which entails that \begin{displaymath}
	f(y) + kd(x,y) \, \geq \, k\delta \, = \, s + \epsilon \, \geq \, f(x) + \epsilon .
\end{displaymath} In any case, $f(y) + kd(x,y) \geq f(x) - \epsilon$ for all $y \in X$. Consequently, $f_{k}(x) \geq f(x) - \epsilon$ as desired. In turn, $\Vert f_{k} \Vert_{\infty} \leq \Vert f \Vert_{\infty} + \epsilon \leq s + 1$ and hence $f_{k} \in \mathrm{Lip}_{\ell}^{\ell}(X,d)$. \end{proof}

\begin{cor}\label{corollary:wasserstein.convergence.to.inavriance} Let $G$ be a topological group and let $d$ be a compatible right-invariant metric on $G$. A net $(\mu_{i})_{i \in I}$ of Borel probability measures UEB-converges to invariance over $G$ if and only if $(\mu_{i})_{i \in I}$ converges to invariance in the mass transportation distance over $d$, i.e., \begin{displaymath}
	\forall g \in G \colon \qquad d_{\mathrm{MT}}((\lambda_{g})_{\ast}(\mu_{i}),\mu_{i}) \longrightarrow 0 \quad (i \to I) .
\end{displaymath} \end{cor}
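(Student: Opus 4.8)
The approach is to play the two notions of ``almost invariance'' off against each other through a single observation: for the compatible right-invariant metric $d$, the metric uniformity of $(G,d)$ is exactly the right uniformity of $G$. Indeed, right-invariance gives $d(x,y)=d(yx^{-1},e)$, so the sets $\{(x,y)\mid yx^{-1}\in B_{d}(e,\epsilon)\}$ are at once the basic right-uniformity entourages and a base for the metric uniformity of $d$; hence $\mathrm{RUCB}(G)=\mathrm{UCB}(G)$ computed with $d$, and $\mathrm{RUEB}(G)=\mathrm{UEB}(G,d)$, so Lemma~\ref{lemma:lipschitz.approximation} becomes applicable. I will also use throughout the change-of-variables identity $\int f\circ\lambda_{g}\,d\mu=\int f\,d(\lambda_{g})_{\ast}(\mu)$, valid for every bounded Borel $f$, which rewrites the quantity in the definition of UEB-convergence to invariance as $\sup_{f\in H}\lvert\int f\,d(\lambda_{g})_{\ast}(\mu_{i})-\int f\,d\mu_{i}\rvert$ and likewise gives $d_{\mathrm{MT}}((\lambda_{g})_{\ast}(\mu),\mu)=\sup_{f\in\mathrm{Lip}_{1}^{1}(G,d)}\lvert\int f\circ\lambda_{g}\,d\mu-\int f\,d\mu\rvert$.

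For the forward implication I would note that $\mathrm{Lip}_{1}^{1}(G,d)\in\mathrm{RUEB}(G)$: it is norm-bounded by $1$, is contained in $\mathrm{Lip}_{1}(G,d)$, and $d$ is a continuous right-invariant pseudo-metric, so the characterization of $\mathrm{RUEB}(G)$ recalled above (together with the uniformity remark, which guarantees $\mathrm{Lip}_{1}^{1}(G,d)\subseteq\mathrm{RUCB}(G)$) applies. Thus, if $(\mu_{i})_{i\in I}$ UEB-converges to invariance, then specializing the definition to $H=\mathrm{Lip}_{1}^{1}(G,d)$ and to an arbitrary $g\in G$ yields precisely $d_{\mathrm{MT}}((\lambda_{g})_{\ast}(\mu_{i}),\mu_{i})\to 0$.

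For the converse, fix $g\in G$, $H\in\mathrm{RUEB}(G)=\mathrm{UEB}(G,d)$ and $\epsilon>0$. By Lemma~\ref{lemma:lipschitz.approximation} there is $\ell\geq 1$ --- the same for all members of $H$ --- such that every $f\in H$ admits some $f'\in\mathrm{Lip}_{\ell}^{\ell}(G,d)$ with $\lVert f-f'\rVert_{\infty}\leq\tfrac{\epsilon}{3}$; then $\tfrac{1}{\ell}f'\in\mathrm{Lip}_{1}^{1}(G,d)$. A three-term triangle inequality (using that $\mu_{i}$ is a probability measure for the end terms) gives, for all $f\in H$ and all $i\in I$,
\[
  \left\lvert \int f\circ\lambda_{g}\,d\mu_{i}-\int f\,d\mu_{i}\right\rvert \,\leq\, 2\lVert f-f'\rVert_{\infty}+\ell\left\lvert\int\tfrac{1}{\ell}f'\circ\lambda_{g}\,d\mu_{i}-\int\tfrac{1}{\ell}f'\,d\mu_{i}\right\rvert \,\leq\, \tfrac{2\epsilon}{3}+\ell\, d_{\mathrm{MT}}\bigl((\lambda_{g})_{\ast}(\mu_{i}),\mu_{i}\bigr).
\]
The right-hand side no longer depends on $f$, and by hypothesis $\ell\, d_{\mathrm{MT}}((\lambda_{g})_{\ast}(\mu_{i}),\mu_{i})\leq\tfrac{\epsilon}{3}$ for all $i$ beyond some $i_{0}\in I$; hence $\sup_{f\in H}\lvert\int f\circ\lambda_{g}\,d\mu_{i}-\int f\,d\mu_{i}\rvert\leq\epsilon$ for $i\geq i_{0}$, which is UEB-convergence to invariance.

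The only non-formal ingredient is Lemma~\ref{lemma:lipschitz.approximation}, and it is worth isolating why it is indispensable: the continuous right-invariant pseudo-metric attached to a UEB set $H$ via $H\subseteq\mathrm{Lip}_{1}(G,\rho)$ need not be Lipschitz-dominated by $d$ (e.g.\ $\rho=\sqrt{d}$ near the diagonal), so one cannot rescale the members of an arbitrary $H\in\mathrm{RUEB}(G)$ directly into $\mathrm{Lip}_{1}^{1}(G,d)$; the uniform Lipschitz approximation --- with the crucial feature that $\ell$ is uniform over $H$ --- is exactly what lets the single quantity $\ell\, d_{\mathrm{MT}}(\cdots)$ control the whole supremum. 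Everything else is routine bookkeeping with push-forwards and the triangle inequality, and I expect no further obstacle.
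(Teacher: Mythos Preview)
Your proof is correct and follows essentially the same approach as the paper: both directions hinge on the observation that the right uniformity of $G$ coincides with the metric uniformity of $d$ (so $\mathrm{RUEB}(G)=\mathrm{UEB}(G,d)$), after which $\mathrm{Lip}_{1}^{1}(G,d)\in\mathrm{RUEB}(G)$ gives the forward implication and Lemma~\ref{lemma:lipschitz.approximation} followed by the same three-term estimate gives the converse. Your added remark explaining why the uniform Lipschitz approximation cannot be bypassed by a naive rescaling is a nice clarification not present in the original.
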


\begin{proof} Since $d$ is continuous and right-invariant, $\mathrm{Lip}_{1}^{1}(G,d)$ belongs to $\mathrm{RUEB}(G)$, whence the former implies the latter. To prove the converse, let $(\mu_{i})_{i \in I}$ be a net of Borel probability measures converging to invariance in the mass transportation distance over $d$. Given that $d$ is right-invariant and generates the topology of $G$, it is easily seen that the right uniformity of~$G$ coincides with the uniformity induced by $d$. Let $H \in \mathrm{RUEB}(G) = \mathrm{UEB}(X,d)$ and $g \in G$. Consider any $\epsilon > 0$. Thanks to~Lemma~\ref{lemma:lipschitz.approximation}, there exists $\ell \geq 1$ with \begin{displaymath}
	H \, \subseteq \, B_{\Vert \cdot \Vert_{\infty}}\! \left(\mathrm{Lip}_{\ell}^{\ell}(G,d),\tfrac{\epsilon}{3}\right) .
\end{displaymath} By assumption, we find $i_{0} \in I$ so that \begin{displaymath}
	\forall i \in I, \, i\geq i_{0} \colon \qquad \sup\nolimits_{f \in \mathrm{Lip}_{1}^{1}(G,d)} \left\lvert \int f \circ \lambda_{g} \,d\mu_{i} - \int f \, d\mu_{i} \right\rvert \, \leq \, \tfrac{\epsilon}{3\ell}
\end{displaymath} Let $i \in I$ with $i \geq i_{0}$. For each $f \in H$, there exists $f' \in \mathrm{Lip}_{\ell}^{\ell}(G,d) = \ell \cdot \mathrm{Lip}_{1}^{1}(G,d)$ with $\lVert f-f' \rVert_{\infty} \leq \tfrac{\epsilon}{3}$, and thus \begin{align*}
	\left\lvert \int f \circ \lambda_{g} \, d\mu_{i} - \int f \, d\mu_{i} \right\rvert \, &\leq \, \lVert (f \circ \lambda_{g}) - (f' \circ \lambda_{g}) \rVert_{\infty} + \left\lvert \int f' \circ \lambda_{g} \, d\mu_{i} - \int f' \, d\mu_{i} \right\rvert + \lVert f' - f \rVert_{\infty} \\
	&\leq \, \tfrac{\epsilon}{3} + \ell \tfrac{\epsilon}{3\ell} + \tfrac{\epsilon}{3} \, = \, \epsilon ,
\end{align*} i.e., $\sup_{f \in H} \left\lvert \int f \circ \lambda_{g} \, d\mu_{i} - \int f \, d\mu_{i} \right\rvert \leq \epsilon$. So, $(\mu_{i})_{i \in I}$ UEB-converges to invariance over $G$. \end{proof}

\section{Equivariant concentration}\label{section:equivariance}

Our proof of Theorem~\ref{theorem:equivariant.concentration} will make a distinction between the precompact and the non-precompact case. Whereas the former may be settled by a very simple and straightforward argument, the treatment of the latter is somewhat more complicated. Recall that a topological group $G$ is said to be \emph{precompact} if, for every $U \in \mathcal{U}(G)$, there exists a finite subset $F \subseteq G$ such that $G = UF$. It is well known that a topological group is precompact if and only if it embeds into a compact group (see~\cite[Corollary~3.7.17]{AT}). The following characterization of precompact groups was obtained independently by Uspenskij (unpublished, cf.~a footnote in~\cite{uspenskij}) and Solecki~\cite{solecki}. A short proof may be found in~\cite[Proposition~4.3]{BouziadTroallic}.

\begin{lem}\label{lemma:usp} Let $G$ be a topological group. If for every $U \in \mathcal{U}(G)$ there exists a finite subset $F \subseteq G$ with $G = FUF$, then $G$ is precompact. \end{lem}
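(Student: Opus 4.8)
The plan is to prove the contrapositive: assuming $G$ is \emph{not} precompact, I will produce a symmetric $V \in \mathcal{U}(G)$ such that $G \neq FVF$ for every finite $F \subseteq G$, which contradicts the hypothesis.

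Since $G$ is not precompact, there is a symmetric $U \in \mathcal{U}(G)$ such that no finitely many left translates $x_{1}U, \dots, x_{n}U$ cover $G$ (equivalently, since $U = U^{-1}$, no finitely many right translates do). Hence we may choose a sequence $(x_{n})_{n \in \mathbb{N}}$ in $G$ recursively with $x_{n+1} \notin \bigcup_{k \leq n} x_{k}U$, so that $x_{k}^{-1}x_{n} \notin U$ whenever $k \neq n$. Now fix any symmetric $V \in \mathcal{U}(G)$ with $VV \subseteq U$ and suppose, towards a contradiction, that $G = FVF$ with $F = \{f_{1}, \dots, f_{m}\}$ finite. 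The $m^{2}$ sets $f_{i}Vf_{j}$ cover $\{x_{n} : n \in \mathbb{N}\}$, so by the pigeonhole principle there are indices $i_{0}, j_{0}$ and an infinite $N \subseteq \mathbb{N}$ with $x_{n} = f_{i_{0}}v_{n}f_{j_{0}}$ for suitable $v_{n} \in V$ $(n \in N)$. For distinct $k, n \in N$ the left factors $f_{i_{0}}$ cancel and one obtains
\[
	v_{k}^{-1}v_{n} \;=\; f_{j_{0}}\bigl(x_{k}^{-1}x_{n}\bigr)f_{j_{0}}^{-1} ,
\]
which lies in $VV \subseteq U$ yet, since $x_{k}^{-1}x_{n} \notin U$, does not lie in the conjugate neighbourhood $f_{j_{0}}Uf_{j_{0}}^{-1}$. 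Thus $\{v_{n} : n \in N\}$ is an infinite subset of $V$ no two of whose left quotients belong to $f_{j_{0}}Uf_{j_{0}}^{-1} \in \mathcal{U}(G)$; in particular $V$ is not totally bounded in the left uniformity of $G$.

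If $G$ possesses a totally bounded neighbourhood of the identity — in particular if $G$ is locally precompact — then, taking such a neighbourhood for $V$ above, the last sentence is already absurd, so $G$ is precompact. The real content of the lemma is the opposite case, where $G$ is not locally precompact (such as $\mathrm{Sym}(\mathbb{N})$): there the displayed computation is not yet a contradiction, because a non--totally bounded neighbourhood may well contain an infinite separated set. The obstruction is precisely the occurrence of the conjugate $f_{j_{0}}Uf_{j_{0}}^{-1}$ instead of $U$: the element $f_{j_{0}}$ is handed to us by the hypothesis and is not under our control at the time $V$ is chosen, and conjugation by it may enlarge arbitrarily small neighbourhoods, so the separation of the $v_{n}$ cannot be pushed back down to the scale $U$. (It is the \emph{second}, right-hand occurrence of the finite set in $FVF$ that creates this conjugation; in a precompactness-type decomposition $G = FU$ the finite set sits on one side only and the corresponding left factors cancel completely, which is why that case is easy.) Removing the obstruction is the genuinely delicate step: it requires invoking the hypothesis a second time, after $F$ has been exposed, at a neighbourhood chosen fine enough — and with the conjugates $f_{j_{0}}Uf_{j_{0}}^{-1}$ in view — that the conjugation arising in a renewed pigeonhole step is finally absorbed into $U$. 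This bookkeeping is carried out in~\cite[Proposition~4.3]{BouziadTroallic}, and it is there that I expect the main difficulty of the proof to reside; I would follow that argument to conclude.
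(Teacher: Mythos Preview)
The paper does not supply its own proof of this lemma; it simply records the statement and refers to \cite[Proposition~4.3]{BouziadTroallic}, exactly as you do at the end of your writeup. Your preliminary pigeonhole computation and your identification of the conjugation obstruction $f_{j_{0}}Uf_{j_{0}}^{-1}$ are accurate (and already more than the paper offers), but since---as you yourself acknowledge---the substantive step is deferred to that same reference, your proposal and the paper's treatment ultimately coincide.
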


We will need the above in the form of Corollary~\ref{corollary:usp.2}.

\begin{cor}\label{corollary:usp} Let $G$ be a topological group. If for every $U \in \mathcal{U}(G)$ there exists a compact subset $K \subseteq G$ with $G = KUK$, then $G$ is precompact. \end{cor}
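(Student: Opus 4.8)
The plan is to derive this straight from Lemma~\ref{lemma:usp}, i.e.\ to upgrade the hypothesis phrased with compact sets to the (formally weaker‑looking) hypothesis with \emph{finite} sets, at the cost of shrinking the neighbourhood. Fix $U \in \mathcal{U}(G)$. First I would use continuity of multiplication in $G$ to choose an open $V \in \mathcal{U}(G)$ with $VVV \subseteq U$, and then apply the assumed property to $V$ to obtain a compact subset $K \subseteq G$ with $G = KVK$.

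Next I would invoke compactness of $K$ twice, once for each side: the families $\{ kV \mid k \in K \}$ and $\{ Vk \mid k \in K \}$ are open covers of $K$ (here we use that $V$ is open and contains $e$, and that left/right translations are homeomorphisms), so extracting finite subcovers and taking the union of the two (finite) index sets yields a single finite $F \subseteq K$ with $K \subseteq FV$ and $K \subseteq VF$. Replacing the left copy of $K$ in $G = KVK$ by $FV$ and the right copy by $VF$ gives
\[
 G \;=\; KVK \;\subseteq\; (FV)\,V\,(VF) \;=\; FVVVF \;\subseteq\; FUF ,
\]
and since the reverse inclusion $FUF \subseteq G$ is trivial, $G = FUF$. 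As $U \in \mathcal{U}(G)$ was arbitrary, Lemma~\ref{lemma:usp} then yields that $G$ is precompact.

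I do not expect any genuine obstacle; the only point that needs a little care is the one‑sided bookkeeping. One must absorb the left‑hand occurrence of $K$ from the left and the right‑hand occurrence from the right, which is exactly why both covers are needed (by right translates $kV$ and by left translates $Vk$) and why three factors of $V$ show up in the estimate — and it is precisely this that forces the preliminary choice of $V$ with $VVV \subseteq U$ rather than just taking $V = U$.
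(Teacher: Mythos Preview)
Your proposal is correct and follows essentially the same route as the paper's proof: choose $V\in\mathcal{U}(G)$ with $V^{3}\subseteq U$, apply the hypothesis to obtain a compact $K$ with $G=KVK$, use compactness to find a finite $F$ with $K\subseteq FV$ and $K\subseteq VF$, and conclude $G=KVK\subseteq FV^{3}F\subseteq FUF$, then appeal to Lemma~\ref{lemma:usp}. Your write-up merely makes explicit the open-cover argument for producing $F$, which the paper leaves to the reader.
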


\begin{proof} We apply Lemma~\ref{lemma:usp}. Let $U \in \mathcal{U}(G)$. Pick $V \in \mathcal{U}(G)$ with $V^{3} \subseteq U$. By assumption, we find a compact subset $K \subseteq G$ such that $G = KVK$. Since $K$ is compact, there is a finite set $F \subseteq G$ with $K \subseteq VF$ and $K \subseteq FV$. It follows that $G = KVK \subseteq FV^{3}F \subseteq FUF$. \end{proof}

\begin{cor}\label{corollary:usp.2} Let $G$ be a topological group. If $G$ is not precompact, then there is $U \in \mathcal{U}(G)$ such that, for every sequence $(K_{n})_{n \in \mathbb{N}}$ of compact subsets of $G$, there is $(g_{n})_{n \in \mathbb{N}} \in G^{\mathbb{N}}$ with \begin{displaymath}
	\forall m,n \in \mathbb{N}, \, m \ne n \colon \qquad UK_{m}g_{m} \cap UK_{n}g_{n} = \emptyset .
\end{displaymath} \end{cor}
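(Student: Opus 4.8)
The plan is to argue contrapositively via Corollary~\ref{corollary:usp}: I want to show that if no such $U$ exists, then $G$ is precompact. So assume that for \emph{every} $U \in \mathcal{U}(G)$ there is a sequence $(K_{n})_{n \in \mathbb{N}}$ of compact subsets of $G$ for which \emph{no} choice of translating elements $(g_{n})_{n \in \mathbb{N}}$ makes the sets $UK_{n}g_{n}$ pairwise disjoint. The idea is to squeeze a single compact set $K$ with $G = KUK$ out of this failure, so that Corollary~\ref{corollary:usp} applies and yields precompactness, contradicting the hypothesis that $G$ is not precompact. The technical heart is to reduce the statement ``some finite subfamily of $(UK_{n}g_{n})$ always overlaps, no matter the $g_n$'' to the \emph{covering} statement ``$G = VK'V K'$'' for suitable $V, K'$.

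First I would reformulate disjointness in group-theoretic terms. Observe that $UK_{m}g_{m} \cap UK_{n}g_{n} \ne \emptyset$ is equivalent to $g_{m}g_{n}^{-1} \in K_{m}^{-1}U^{-1}UK_{n}$. Fix a symmetric $V \in \mathcal{U}(G)$ with $V^{2} \subseteq U$; then this membership is implied by (and, for the purpose of forcing overlaps, comparable to) $g_m g_n^{-1} \in K_m^{-1} V^{2} K_n$. The key reduction is this: I claim it suffices to take all $K_n$ equal to one fixed compact set, or rather to exploit that we are free to pick the \emph{worst} sequence $(K_n)$. Suppose toward a contradiction with precompactness (so Corollary~\ref{corollary:usp} fails for some entourage) and suppose also the negation of the corollary's conclusion for that same entourage. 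Then I can build, by a greedy/transfinite recursion on the compact sets, a sequence witnessing that the relevant translates \emph{can} be made disjoint, unless $G$ is covered by $KUK$ for a compact $K$ assembled along the way.

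Concretely: pick $U$ witnessing non-precompactness via the failure of Corollary~\ref{corollary:usp}, i.e.\ choose a symmetric open $V \in \mathcal{U}(G)$ with $V^{4} \subseteq U$ such that no compact $K$ satisfies $G = K V K$. Given any sequence $(K_n)$ of compact sets, I construct $(g_n)$ recursively: having chosen $g_1, \dots, g_{n-1}$, the union $\bigcup_{m<n} UK_m g_m$ is contained in a set of the form $\widetilde{K}_{n-1} V$ where $\widetilde{K}_{n-1} \defeq \bigcup_{m<n}(V K_m g_m)$ is compact. Since $G \ne (\widetilde{K}_{n-1} \cup \{e\})\, V\, (\widetilde{K}_{n-1} \cup \{e\})$ — as that outer set is compact and would contradict our choice of $V$ — there is a point $p_n$ of $G$ avoiding $\widetilde{K}_{n-1} V \widetilde{K}_{n-1}$, and setting $g_n$ so that $VK_n g_n$ lands near $p_n$ keeps $UK_n g_n$ disjoint from all earlier $UK_m g_m$. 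This produces the desired $(g_n)$, contradicting the assumed failure of the corollary's conclusion. Hence the corollary holds.

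The main obstacle I anticipate is the bookkeeping in the recursive step: one must verify that $UK_n g_n$ lies inside a controlled translate of the running compact set (so that avoiding $\widetilde{K}_{n-1} V \widetilde{K}_{n-1}$ genuinely forces disjointness from all previous terms, not just the last one), and that the ``outer'' compact set appearing in the application of Corollary~\ref{corollary:usp}'s contrapositive is exactly of the form (compact)$\cdot V \cdot$(compact). This is a matter of choosing the nesting of entourages carefully — a symmetric $V$ with $V^{4} \subseteq U$ should give enough room — but it is the step where an off-by-one in the entourage powers would break the argument, so I would write it out explicitly with the chain of inclusions $UK_m g_m \subseteq V^{2} K_m g_m \subseteq V \cdot (V K_m g_m) \subseteq V \widetilde{K}_{n-1}$ and dually on the right.
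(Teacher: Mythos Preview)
Your overall strategy---obtain a neighbourhood $V$ with $G \ne KVK$ for every compact $K$ via Corollary~\ref{corollary:usp}, then build the $g_{n}$ recursively by choosing each $g_{n}$ outside a set of the form $(\text{compact})\cdot V\cdot(\text{compact})$---is exactly the paper's approach. However, two concrete details in your execution are wrong, and one of them would break the argument as written.

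First, the containment between $U$ and $V$ is backwards. You write $V^{4} \subseteq U$ but later need $UK_{m}g_{m} \subseteq V^{2}K_{m}g_{m}$, i.e.\ $U \subseteq V^{2}$; these are incompatible. The correct order is the opposite: let $V$ be the neighbourhood witnessing $G \ne KVK$ for all compact $K$, and take the \emph{output} neighbourhood $U$ so that $U^{-1}U \subseteq V$. Then $UK_{m}g_{m} \cap UK_{n}g_{n} \ne \emptyset$ forces $K_{n}g_{n} \cap V K_{m}g_{m} \ne \emptyset$, which is what the recursion must exclude.

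Second, and more seriously, your set $\widetilde{K}_{n-1} = \bigcup_{m<n} V K_{m} g_{m}$ is \emph{not} compact: each $V K_{m} g_{m}$ is open (a union of right translates of the open set $V$), and in a non-locally-compact group even $\overline{V}$ need not be compact. So you cannot feed $\widetilde{K}_{n-1}$ into the hypothesis ``$G \ne (\text{compact})V(\text{compact})$''. The fix is simply to drop the $V$: the finite union $L_{n-1} \defeq \bigcup_{m<n} K_{m} g_{m}$ \emph{is} compact, and since $K_{n}^{-1}$ is compact as well, $K_{n}^{-1} V L_{n-1} \ne G$. Choose $g_{n}$ outside this set; then $K_{n}g_{n} \cap V L_{n-1} = \emptyset$, and hence $UK_{n}g_{n} \cap UK_{m}g_{m} = \emptyset$ for every $m < n$ by the previous paragraph. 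This is exactly the paper's argument, with no need for the auxiliary point $p_{n}$ or the contrapositive framing.
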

	
\begin{proof} Since $G$ is not precompact, Corollary~\ref{corollary:usp} above asserts the existence of some $V \in \mathcal{U}(G)$ such that $G \ne KVK$ for any compact set $K \subseteq G$. Let $U \in \mathcal{U}(G)$ with $U^{-1}U \subseteq V$. We claim that $U$ has the desired property. To see this, let $(K_{n})_{n \in \mathbb{N}}$ be a sequence of compact subsets of $G$. We select $(g_{n})_{n \in \mathbb{N}} \in G^{\mathbb{N}}$ recursively as follows: we let $g_{0} \defeq e$, and if $g_{0},\ldots,g_{n-1} \in G$ are chosen appropriately, then we pick \begin{displaymath}
	g_{n} \in G \setminus (K^{-1}_{n}V(K_{0}g_{0} \cup \ldots \cup K_{n-1}g_{n-1}))
\end{displaymath} and note that $K_{n}g_{n} \cap V(K_{0}g_{0} \cup \ldots \cup K_{n-1}g_{n-1}) = \emptyset$, whence \begin{displaymath}
	UK_{n}g_{n} \cap U(K_{0}g_{0} \cup \ldots \cup K_{n-1}g_{n-1}) = \emptyset .
\end{displaymath} Evidently, the sequence $(g_{n})_{n \in \mathbb{N}}$ is as desired. \end{proof}

Before moving on to the proof of Theorem~\ref{theorem:equivariant.concentration}, let us note two basic preliminary observations (Lemma~\ref{lemma:extending.metrics} and Lemma~\ref{lemma:density}). The first one, concerning the metrizability of topological groups, will be deduced from the following more general fact. 

\begin{lem}\label{lemma:extending.uniform.metrics} Let $X$ be a dense subset of a Hausdorff uniform space $Y$. Suppose that $X$ admits a metric $d$ generating the subspace uniformity inherited from $Y$. Then there exists a unique continuous map $D \colon Y \times Y \to \mathbb{R}$ with $D\vert_{X \times X} = d$, and furthermore $D$ is a metric generating the uniformity of $Y$. \end{lem}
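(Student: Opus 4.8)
The plan is to obtain $D$ as the unique uniformly continuous extension of $d$ to $Y \times Y$, and then verify that this extension is a metric generating the uniformity of $Y$.

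First I would note that, since the subspace uniformity on $X \times X \subseteq Y \times Y$ coincides with the product of the two copies of the subspace uniformity on $X$, and the latter is generated by $d$ by hypothesis, the inequality $\vert d(x,y) - d(x',y') \vert \leq d(x,x') + d(y,y')$ shows that $d \colon X \times X \to \mathbb{R}$ is uniformly continuous with respect to the uniformity inherited from $Y \times Y$. As $X \times X$ is dense in $Y \times Y$ and $\mathbb{R}$ is a complete Hausdorff uniform space, the standard extension theorem for uniformly continuous maps yields a uniformly continuous map $D \colon Y \times Y \to \mathbb{R}$ with $D\vert_{X \times X} = d$. Uniqueness even among merely continuous extensions is automatic, since two continuous maps into the Hausdorff space $\mathbb{R}$ that agree on the dense set $X \times X$ agree everywhere. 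That $D$ is a (continuous) pseudo-metric then follows by continuity: the identities $D(y,y) = 0$, $D(y,z) = D(z,y)$, $D(y,z) \leq D(y,w) + D(w,z)$, and $D \geq 0$ each hold on a dense subset of the appropriate finite power of $Y$ (because $d$ is a metric and $X^{k}$ is dense in $Y^{k}$), hence on all of it.

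The heart of the proof is to show that $D$ generates the uniformity of $Y$. One inclusion is immediate: uniform continuity of $D$ on $Y \times Y$ makes each $\{ (y,z) \mid D(y,z) < \epsilon \}$ an entourage of $Y$, so the $D$-uniformity is coarser than that of $Y$. For the converse, let $U$ be an entourage of $Y$ and choose a symmetric entourage $V$ with $V \circ V \circ V \subseteq U$. Since $d$ generates the subspace uniformity on $X$, there is $\delta > 0$ with $\{ (x,x') \in X \times X \mid d(x,x') < \delta \} \subseteq V$. I claim $\{ (y,z) \in Y \times Y \mid D(y,z) < \delta/3 \} \subseteq U$. Indeed, given such a pair $(y,z)$, the set $\{ x \in Y \mid D(x,y) < \delta/3 \} \cap \{ x \in Y \mid (x,y) \in V \}$ is an open neighbourhood of $y$, hence meets the dense set $X$; pick $x$ in it, and analogously $x' \in X$ with $D(x',z) < \delta/3$ and $(x',z) \in V$. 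Then $d(x,x') = D(x,x') \leq D(x,y) + D(y,z) + D(z,x') < \delta$, so $(x,x') \in V$, and together with $(y,x) \in V$ and $(x',z) \in V$ this gives $(y,z) \in V \circ V \circ V \subseteq U$. Thus the two uniformities coincide. Finally, $D$ is a genuine metric: if $D(y,z) = 0$, then $(y,z)$ lies in every $D$-ball entourage, hence in every entourage of $Y$, and so $y = z$ because $Y$ is Hausdorff.

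I expect the main obstacle to be precisely the second inclusion in the uniformity comparison — passing from an arbitrary entourage of $Y$ to one controlled by $D$. This is the only place where density of $X$ is used in an essential way, and the point is to approximate points of $Y$ by points of $X$ simultaneously in the ambient uniformity and in the pseudo-metric $D$; the rest of the argument is bookkeeping with the extension theorem and with continuity.
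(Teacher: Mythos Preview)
Your proof is correct and follows essentially the same route as the paper: extend $d$ by the uniform extension theorem, verify the pseudo-metric axioms by density/continuity, and for the nontrivial inclusion pick a symmetric entourage $V$ with $V \circ V \circ V \subseteq U$, find $\delta$ so that $d$-balls of radius $\delta$ in $X$ lie in $V$, and then approximate each of $y,z$ by a point of $X$ that is simultaneously $V$-close and $D$-close (within $\delta/3$), so that the triangle inequality forces the approximating pair into $V$. The only cosmetic point is that $\{x \mid (x,y) \in V\}$ need not be \emph{open} unless you choose $V$ open (which you may), but it is always a neighbourhood of $y$, so the density argument goes through unchanged.
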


\begin{proof} Uniqueness of the desired map is an immediate consequence of $X$ being dense in $Y$. Let us prove the existence. Since $X \times X$ is a dense subspace of $Y \times Y$ and $d \colon X \times X \to \mathbb{R}$ is uniformly continuous, there exists a unique uniformly continuous map $D \colon Y \times Y \to \mathbb{R}$ with $D\vert_{X \times X} = d$. Due to $D$ being continuous, $\left. S \defeq \left\{ (x,y) \in Y^{2} \, \right| D(x,y) \geq 0 , \, D(x,y) = D(y,x) \right\}$ is closed in $Y^{2}$ and $\left. T \defeq \left\{ (x,y,z) \in Y^{3} \, \right| D(x,z) \leq D(x,y) + D(y,z) \right\}$ is closed in $Y^{3}$. Since $D\vert_{X \times X} = d$ is a metric, $X^{2} \subseteq S$ and $X^{3} \subseteq T$, and therefore $Y^{2} = S$ and $Y^{3} = T$ by density of $X$ in $Y$. Hence, $D$ is a uniformly continuous pseudo-metric on $Y$. It remains to prove that the uniformity of $Y$ is contained in the one generated by $D$, which will then imply that $D$ is a metric. To this end, let $U$ be an arbitrary entourage of $Y$. Choose a symmetric entourage $V$ of $Y$ such that $V \circ V \circ V \subseteq U$. As $d$ generates the uniformity of $X$, there exists $\epsilon > 0$ such that $\! \left. \left\{ (x,y) \in X^{2} \, \right| d(x,y) < \epsilon \right\} \subseteq V$. We will show that $U$ contains $W \defeq \left\{ (x,y) \in Y^{2} \left| \, D(x,y) < \tfrac{\epsilon}{3} \right\} \right.$. Let $(x,y) \in W$. As $D$ is continuous and $X$ is dense in~$Y$, we find $x' \in X \cap B_{D}\!\left(x,\tfrac{\epsilon}{3}\right)$ and $y' \in X \cap B_{D}\!\left(y,\tfrac{\epsilon}{3}\right)$ with $(x,x') \in V$ and $(y,y') \in V$. Then \begin{displaymath}
	d(x',y') \, = \, D(x',y') \, \leq \, D(x',x) + D(x,y) + D(y,y') \, < \, \epsilon ,
\end{displaymath} thus $(x',y') \in V$ and hence $(x,y) \in V \circ V \circ V \subseteq U$. Therefore, $W \subseteq U$ as desired. \end{proof}

\begin{lem}\label{lemma:extending.metrics} Let $G$ be a dense subgroup of a topological group $H$. If $d$ is a right-invariant compatible metric on $G$, then there exists a unique continuous map $D \colon H \times H \to \mathbb{R}$ with $D\vert_{G \times G} = d$, and furthermore $D$ is a right-invariant compatible metric on $H$. \end{lem}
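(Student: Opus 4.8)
The plan is to obtain $D$ from Lemma~\ref{lemma:extending.uniform.metrics}, applied to the dense subset $X = G$ of $Y = H$, the latter equipped with its right uniformity (which is Hausdorff, since $H$ is). The hypothesis to be checked before invoking that lemma is that $d$ generates the subspace uniformity that $G$ inherits from the right uniformity of $H$. Since $d$ is right-invariant, one has $d(x,y) = d(e, yx^{-1})$ for all $x,y \in G$, so the uniformity induced by $d$ has a basis of entourages $\{ (x,y) \in G^{2} \mid yx^{-1} \in B_{d}(e,\epsilon) \}$ for $\epsilon > 0$; and, as $d$ is compatible, $\{ B_{d}(e,\epsilon) \mid \epsilon > 0 \}$ is a neighborhood basis at $e$ in $G$. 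On the other hand, the entourages that $G$ inherits from the right uniformity of $H$ are exactly $\{ (x,y) \in G^{2} \mid yx^{-1} \in U \cap G \}$ with $U \in \mathcal{U}(H)$, and $\{ U \cap G \mid U \in \mathcal{U}(H) \}$ is precisely the neighborhood filter of $e$ in $G$. Hence the two uniformities coincide, and Lemma~\ref{lemma:extending.uniform.metrics} produces a unique continuous map $D \colon H \times H \to \mathbb{R}$ with $D\vert_{G \times G} = d$ which is, moreover, a metric generating the uniformity of $H$; in particular, $D$ is a compatible metric on $H$.

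It then remains only to verify right-invariance of $D$, i.e., $D(xg,yg) = D(x,y)$ for all $x,y,g \in H$. The map $H^{3} \to \mathbb{R}$, $(x,y,g) \mapsto D(xg,yg) - D(x,y)$, is continuous, being assembled from $D$ and the multiplication of $H$, and it vanishes on $G^{3}$ by right-invariance of $d$; since $G^{3}$ is dense in $H^{3}$, it vanishes identically, so $D$ is right-invariant. Uniqueness of $D$ is already part of the conclusion of Lemma~\ref{lemma:extending.uniform.metrics}, and in any case follows at once from density of $G$ in $H$.

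No genuine obstacle arises beyond this bookkeeping; the only step requiring a moment's care is the identification of the subspace uniformity with the uniformity generated by $d$, which rests on right-invariance of $d$ together with the elementary fact that the right uniformity of a topological group restricts to the right uniformity of any subgroup.
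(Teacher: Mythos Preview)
Your proof is correct and follows essentially the same route as the paper: identify the subspace uniformity on $G$ with the uniformity induced by $d$, invoke Lemma~\ref{lemma:extending.uniform.metrics} to obtain the compatible metric $D$, and then deduce right-invariance from the density of $G^{3}$ in $H^{3}$ together with continuity. Your version is in fact a bit more explicit than the paper's on the uniformity identification, and your phrasing of the right-invariance argument (via the vanishing of the continuous difference $D(xg,yg)-D(x,y)$) is equivalent to the paper's closed-set formulation.
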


\begin{proof} As $d$ is a right-invariant compatible metric on $G$, it generates the right uniformity of $G$, which coincides with the restriction of the right uniformity of $H$ to $G$. Thus, by Lemma~\ref{lemma:extending.uniform.metrics}, there exists a unique continuous map $D \colon H \times H \to \mathbb{R}$ with $D\vert_{G \times G} = d$, and moreover $D$ is a compatible metric on $H$. It remains to show that $D$ is right-invariant. Indeed, the continuity of $D$ implies that $\! \left. T \defeq \left\{ (x,y,z) \in H^{3} \, \right| D(xz,yz) = D(x,y) \right\}$ is closed in $H^{3}$. Since $G^{3} \subseteq T$ by right invariance of $d = D\vert_{G \times G}$ and $G$ is dense in $H$, it follows that $H^{3} = T$, as desired. \end{proof}

Our second preliminary note is the following variation on the well-known fact that quotients of Banach spaces by closed linear subspaces are again Banach spaces. We include a proof for the sake of convenience

\begin{lem}\label{lemma:density} Let $X$ and $Y$ be Banach spaces, and let $Y_{0}$ be any dense linear subspace of $Y$. If $T \colon X \to Y$ is a bounded linear operator such that \begin{displaymath}
	\forall y \in Y_{0} \colon \qquad \Vert y \Vert_{Y} = \inf \left\{ \Vert x \Vert_{X} \left| \, x \in T^{-1}(y) \right\} \! , \right.
\end{displaymath} then $T(X) = Y$. \end{lem}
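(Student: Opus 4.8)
The plan is to prove this variant of the open mapping theorem by combining the hypothesis on $Y_{0}$ with a standard successive-approximation (telescoping) argument. The hypothesis says that every element of the dense subspace $Y_{0}$ can be lifted along $T$ with norm control arbitrarily close to optimal; I want to upgrade this to: every element of $Y$ lies in $T(X)$.

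First I would record the quantitative consequence of the hypothesis: for each $y \in Y_{0}$ and each $\delta > 0$ there exists $x \in X$ with $T(x) = y$ and $\Vert x \Vert_{X} \leq \Vert y \Vert_{Y} + \delta$. Next, fix an arbitrary $y \in Y$ and, using density of $Y_{0}$ in $Y$, choose a sequence $(y_{n})_{n \in \mathbb{N}}$ in $Y_{0}$ with $y_{0} = 0$ and $\Vert y - (y_{0} + \cdots + y_{n}) \Vert_{Y} \leq 2^{-n}\Vert y\Vert_Y$ (for instance set $z_n$ to be a point of $Y_0$ within $2^{-n}\Vert y\Vert_Y$ of $y$ and let $y_{n+1} = z_{n+1} - z_n$, $y_1 = z_1$); then $\sum_{n} y_{n} = y$ in $Y$ and $\Vert y_{n}\Vert_Y$ is summable, with $\Vert y_{n+1}\Vert_Y \leq \Vert z_{n+1} - y\Vert_Y + \Vert y - z_n\Vert_Y \leq 3\cdot 2^{-n}\Vert y\Vert_Y$. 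For each $n$ pick $x_{n} \in T^{-1}(y_{n})$ with $\Vert x_{n}\Vert_X \leq \Vert y_{n}\Vert_Y + 2^{-n}$; these norms are summable, so by completeness of $X$ the series $\sum_{n} x_{n}$ converges to some $x \in X$. Finally, boundedness (hence continuity) of $T$ gives $T(x) = T\!\left(\sum_n x_n\right) = \sum_{n} T(x_{n}) = \sum_{n} y_{n} = y$, so $y \in T(X)$. Since $y \in Y$ was arbitrary, $T(X) = Y$.

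The only point requiring any care is the bookkeeping in the telescoping step — arranging the approximants $y_n \in Y_0$ so that both $\sum y_n = y$ and $\sum \Vert y_n\Vert_Y < \infty$, which in turn forces $\sum \Vert x_n\Vert_X < \infty$; this is routine but is where the hypothesis that $Y_0$ is \emph{dense} and the \emph{near-optimal} (rather than exactly optimal) lifting bound both get used. Completeness of $X$ converts the absolutely summable series into a genuine limit, and continuity of $T$ does the rest; no actual open-mapping machinery or Baire category is needed, precisely because the norm-infimum hypothesis is handed to us on $Y_0$.
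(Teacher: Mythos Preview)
Your proof is correct and follows essentially the same route as the paper's: both pick approximants $z_{n}\in Y_{0}$ converging to $y$, telescope to get summable increments $y_{n+1}=z_{n+1}-z_{n}\in Y_{0}$, lift each increment with near-optimal norm via the hypothesis, and use completeness of $X$ together with continuity of $T$ to sum the lifts. The only cosmetic differences are your scaling by $\lVert y\rVert_{Y}$ in the approximation rate and the explicit appeal to absolute convergence, neither of which changes the substance.
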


\begin{proof} Let $y \in Y$. As $Y_{0}$ is dense in $Y$, there is a sequence $(z_{n})_{n \in \mathbb{N}}$ in $Y_{0}$ with $\Vert y - z_{n} \Vert_{Y} \leq 2^{-n}$ for each $n \in \mathbb{N}$. By assumption, there exists $x_{0} \in T^{-1}(z_{0})$ such that $\Vert x_{0} \Vert_{X} \leq \Vert z_{0} \Vert_{Y} + 1$. Likewise, our hypothesis asserts that, for each $n \in \mathbb{N}$, we find \begin{displaymath}
	x_{n+1} \in T^{-1}(z_{n+1} - z_{n})
\end{displaymath} with $\Vert x_{n+1} \Vert_{X} \leq \Vert z_{n+1} - z_{n} \Vert_{Y} + 2^{-(n+1)}$. For each $n \in \mathbb{N}$, consider $x^{\ast}_{n} \defeq \sum_{i\leq n} x_{i} \in X$ and note that $T(x^{\ast}_{n}) = \sum_{i\leq n} T(x_{i}) = z_{n}$. Furthermore, for all $m,n \in \mathbb{N}$ where $m>n$, \begin{displaymath}
	\Vert x^{\ast}_{m} - x^{\ast}_{n} \Vert_{X} \, \leq \, \sum_{i = n+1}^{m} \Vert x_{i} \Vert_{X} \, \leq \, \sum_{i=n+1}^{m} \Vert z_{i} - z_{i-1} \Vert_{Y} + 2^{-i} \, \leq \, 3 \sum_{i =n}^{m-1} 2^{-i} .
\end{displaymath} Hence, $(x^{\ast}_{n})_{n \in \mathbb{N}}$ is a Cauchy sequence in $X$, thus convergent to some point $x^{\ast} \in X$. Since $T$ is continuous, it follows that $T(x^{\ast}) = y$ as desired. \end{proof}

Now everything is in place for the proof of Theorem~\ref{theorem:equivariant.concentration}.

\begin{proof}[Proof of Theorem~\ref{theorem:equivariant.concentration}] Let us start off with a remark about the last assertion of Theorem~\ref{theorem:equivariant.concentration}: since $X = \spt \mu_{X}$ is compact, $\psi (X) = \psi (\spt \mu_{X}) = \spt (\psi_{\ast}\mu_{X})$ for any continuous mapping $\psi \colon X \to \mathrm{S}(G)$, whence the $G$-invariance of $\psi_{\ast}(\mu_{X})$ would imply the $G$-invariance of $\psi (X)$. We will establish the existence of the desired embedding by case analysis.
	
We first treat the precompact case. Assuming that $G$ is precompact, we find an embedding $h \colon G \to K$ into a compact group $K$ such that $K = \overline{h(G)}$, cf.~\cite[Corollary~3.7.17]{AT}. By Lemma~\ref{lemma:extending.metrics}, there is a unique continuous metric $d_{K} \colon K \times K \to \mathbb{R}$ such that $h \colon (G,d) \to (K,d_{K})$ is isometric, and furthermore $d_{K}$ is a compatible right-invariant metric on $K$. Let us denote by $\nu_{K}$ the normalized Haar measure on $K$. We prove that the sequence $\left(K_{n},d\!\!\upharpoonright_{K_{n}},\mu_{n}\!\!\upharpoonright_{K_{n}}\right)_{n \in \mathbb{N}}$ concentrates to $(K,d_{K},\nu_{K})$. For each $n \in \mathbb{N}$, the map $p_{n} \defeq h\vert_{K_{n}} \colon \left(K_{n},d\!\!\upharpoonright_{K_{n}}\right) \to (K,d_{K})$ is an isometric embedding, whence $\mathrm{Lip}_{1}(K_{n},d\!\!\upharpoonright_{K_{n}}) = {\mathrm{Lip}_{1}(K,d_{K})} \circ {p_{n}}$. According to Theorem~\ref{theorem:measure.concentration}, it now remains to show that $(p_{n})_{\ast}(\mu_{n}\!\!\upharpoonright_{K_{n}}) = h_{\ast}(\mu_{n}) \longrightarrow \nu_{K}$ weakly as $n \to \infty$. To this end, let $f \in \mathrm{C}(K) = \mathrm{RUCB}(K)$. By Corollary~\ref{corollary:wasserstein.convergence.to.inavriance} and Lemma~\ref{lemma:translated.convergence}(2), the sequence $(h_{\ast}(\mu_{n}))_{n \in \mathbb{N}}$ UEB-converges to invariance over $K$, in particular \begin{displaymath}
	\forall x \in K \colon \qquad \left\lvert \int f(xy) \, dh_{\ast}(\mu_{n})(y) - \int f(y) \, dh_{\ast}(\mu_{n})(y) \right\rvert \, \longrightarrow \, 0 \quad (n \to \infty) .
\end{displaymath} Due to Lebesgue's dominated convergence theorem, it follows that \begin{displaymath}
	\int \left\lvert \int f(xy) \, dh_{\ast}(\mu_{n})(y) - \int f(y) \, dh_{\ast}(\mu_{n})(y) \right\rvert \, d\nu_{K}(x) \, \longrightarrow \, 0 \quad (n \to \infty) .
\end{displaymath} Thanks to the right invariance of $\nu_{K}$ along with Fubini's theorem, we also have \begin{align*}
	\left\lvert \int f \, d\nu_{K} - \int f \, dh_{\ast}(\mu_{n}) \right\rvert \, &= \, \left\lvert \int \int f(xy) \, d\nu_{K}(x) \, dh_{\ast}(\mu_{n})(y) - \int f(y) \, dh_{\ast}(\mu_{n})(y) \right\rvert \\
	&= \, \left\lvert \int \int f(xy) \, dh_{\ast}(\mu_{n})(y) \, d\nu_{K}(x) - \int f(y) \, dh_{\ast}(\mu_{n})(y) \right\rvert \\
	&\leq \, \int \left\lvert \int f(xy) \, dh_{\ast}(\mu_{n})(y) - \int f(y) \, dh_{\ast}(\mu_{n})(y) \right\rvert \, d\nu_{K}(x)
\end{align*} for all $n \in \mathbb{N}$, which by the above implies that $\int f \, dh_{\ast}(\mu_{n}) \longrightarrow \int f \, d\nu_{K}$ as $n \to \infty$. This shows that $(p_{n})_{\ast}(\mu_{n}\!\!\upharpoonright_{K_{n}}) = h_{\ast}(\mu_{n}) \longrightarrow \nu_{K}$ weakly as $n \to \infty$, whence $\left(K_{n},d\!\!\upharpoonright_{K_{n}},\mu_{n}\!\!\upharpoonright_{K_{n}}\right)_{n \in \mathbb{N}}$ indeed concentrates to $(K,d_{K},\nu_{K})$. In view of (B), this necessitates that $d_{\mathrm{conc}}((K,d_{K}),(X,d_{X})) = 0$, wherefore the $mm$-spaces $(K,d_{K},\nu_{K})$ and $(X,d_{X},\mu_{X})$ are isomorphic~\cite[Theorem~5.16]{ShioyaBook}. Since both $\nu_{K}$ and $\mu_{X}$ have full support, this entails the existence of an isometric bijection $\zeta \colon (X,d_{X}) \to (K,d_{K})$ with $\zeta_{\ast}(\mu_{X}) = \nu_{K}$. Also, given that $h$ embeds the topological group $G$ densely into $K$, we obtain a $G$-equivariant homeomorphism $\chi \colon \mathrm{S}(G) \to \mathrm{S}(K)$ by setting \begin{displaymath}
	\chi (\xi)(f) \defeq \xi (f \circ h) \qquad (\xi \in \mathrm{S}(G), \, f \in \mathrm{C}(K)) ,
\end{displaymath} where $\mathrm{C}(K) = \mathrm{RUCB}(K)$ by compactness of $K$. As the $K$-equivariant map $\eta_{K} \colon K \to \mathrm{S}(K)$ is a homeomorphism due to Gelfand duality, $\phi \defeq \chi^{-1} \circ \eta_{K} \colon K \to \mathrm{S}(G)$ is a $G$-equivariant homeomorphism. Therefore, we conclude that $\psi \defeq \phi \circ \zeta \colon X \to \mathrm{S}(G)$ is a homeomorphism and that $\psi_{\ast}(\mu_{X}) = \phi_{\ast}(\nu_{K})$ is $G$-invariant. This settles the precompact case.
	
For the rest of the proof, let us assume that $G$ is not precompact. Let $U \in \mathcal{U}(G)$ be as in Corollary~\ref{corollary:usp.2}. Since $K_{n} \defeq \spt \mu_{n}$ is compact for every $n \in \mathbb{N}$, there exists $(g_{n})_{n \in \mathbb{N}} \in G^{\mathbb{N}}$ with $UK_{m}g_{m} \cap UK_{n}g_{n} = \emptyset$ for any two distinct $m,n \in \mathbb{N}$. For each $n \in \mathbb{N}$, consider the push-forward Borel probability measure $\nu_{n} \defeq \bigl(\rho_{g_{n}}\bigr)_{\ast}(\mu_{n})$ on $G$, and note that $S_{n} \defeq \spt \nu_{n} = K_{n}g_{n}$. We conclude that \begin{equation}\tag{i}\label{disjoint}
	\forall m,n \in \mathbb{N}, \, m \ne n \colon \qquad US_{m} \cap US_{n} = \emptyset .
\end{equation} Due to Corollary~\ref{corollary:wasserstein.convergence.to.inavriance} and Lemma~\ref{lemma:translated.convergence}(1), the sequence $(\nu_{n})_{n \in \mathbb{N}}$ UEB-converges to invariance over $G$. As the metric $d$ is right-invariant, \begin{displaymath} 
	(K_{n},d\!\!\upharpoonright_{K_{n}},\mu_{n}\!\!\upharpoonright_{K_{n}}) \, \longrightarrow \, (S_{n},d\!\!\upharpoonright_{S_{n}},\nu_{n}\!\!\upharpoonright_{S_{n}}), \qquad x \, \longmapsto \, xg_{n}
\end{displaymath} is an $mm$-space isomorphism for every $n \in \mathbb{N}$. Therefore, since $(K_{n},d\!\!\upharpoonright_{K_{n}},\mu_{n}\!\!\upharpoonright_{K_{n}})_{n \in \mathbb{N}}$ concentrates to $(X,d_{X},\mu_{X})$, so~does~$(S_{n},d\!\!\upharpoonright_{S_{n}},\nu_{n}\!\!\upharpoonright_{S_{n}})_{n \in \mathbb{N}}$. Consider the Prokhorov distance $(d_{X})_{\mathrm{P}}$ on $\mathrm{P}(X)$ associated with the metric $d_{X}$ (see Section~\ref{section:gromov}). Due to Theorem~\ref{theorem:measure.concentration}, there exists a sequence of Borel maps $p_{n} \colon S_{n} \to X$ ($n \in \mathbb{N}$) such that \begin{enumerate}
	\item[$(1)$] $(d_{X})_{\mathrm{P}}((p_{n})_{\ast}(\overline{\nu}_{n}),\mu_{X}) \longrightarrow 0$ as $n \to 0$, \smallskip
	\item[$(2)$] $(\mathrm{me}_{\overline{\nu}_{n}})_{\mathrm{H}}(\mathrm{Lip}_{1}(X,d_{X}) \circ p_{n},\mathrm{Lip}_{1}(S_{n},d_{n})) \longrightarrow 0$ as $n \to \infty$,
\end{enumerate} where $\overline{\nu}_{n} \defeq \nu_{n}\!\!\upharpoonright_{S_{n}}$ and $d_{n} \defeq d\!\!\upharpoonright_{S_{n}}$ for $n \in \mathbb{N}$. We show that, for every $f \in \mathrm{Lip}^{\infty}(G,d)$, \begin{displaymath}
	\left. T(f) \defeq \left\{ (f_{n})_{n \in \mathbb{N}} \in \mathrm{Lip}_{\ell}^{\ell}(X,d_{X})^{\mathbb{N}} \, \right| \ell \geq 0, \ \lim\nolimits_{n \to \infty} \mathrm{me}_{\overline{\nu}_{n}}(f_{n} \circ p_{n},f\vert_{S_{n}}) = 0 \right\} .
\end{displaymath} is a non-empty set. For this purpose, let $f \in \mathrm{Lip}_{\ell}^{\ell}(G,d)$ for some $\ell \geq 1$. Thanks to~$(2)$, there exists a sequence $(f_{n})_{n \in \mathbb{N}} \in \mathrm{Lip}_{1}(X,d_{X})^{\mathbb{N}}$ such that $\mathrm{me}_{\overline{\nu}_{n}}(f_{n} \circ p_{n}, \ell^{-1}f\vert_{S_{n}}) \longrightarrow 0$ as $n \to \infty$. For each $n \in \mathbb{N}$, it follows that \begin{displaymath}
	f'_{n} \defeq ((\ell f_{n}) \wedge \ell ) \vee (- \ell) \in \mathrm{Lip}_{\ell}^{\ell}(X,d_{X})
\end{displaymath} and moreover $\mathrm{me}_{\overline{\nu}_{n}}(f'_{n} \circ p_{n},f\vert_{S_{n}}) \, \leq \, \mathrm{me}_{\overline{\nu}_{n}}((\ell f_{n}) \circ p_{n},f\vert_{S_{n}}) \, \leq \, \ell \mathrm{me}_{\overline{\nu}_{n}}(f_{n} \circ p_{n}, \ell^{-1}f\vert_{S_{n}})$, which shows that $(f'_{n})_{n \in \mathbb{N}} \in T(f)$, as desired.

Next let us prove that \begin{equation}\tag{ii}\label{confluence}
	\forall f \in \mathrm{Lip}^{\infty}(G,d) \ \forall (f_{n})_{n \in \mathbb{N}}, (f'_{n})_{n \in \mathbb{N}} \in T(f) \colon \quad \lim\nolimits_{n \to \infty} \Vert f_{n} - f'_{n} \Vert_{\infty} = 0 .
\end{equation} To this end, let $f \in \mathrm{Lip}^{\infty}(G,d)$ and $(f_{n})_{n \in \mathbb{N}}, (f'_{n})_{n \in \mathbb{N}} \in T(f)$. Fix $\ell \geq 1$ so that \begin{displaymath}
	\{ f_{n} \mid n \in \mathbb{N} \} \cup \{ f_{n}' \mid n \in \mathbb{N} \} \subseteq \mathrm{Lip}_{\ell}^{\ell}(X,d_{X}) .
\end{displaymath} According to~(1), there exists a sequence $(\delta_{n})_{n \in \mathbb{N}}$ of positive real numbers converging to $0$ such that $(d_{X})_{\mathrm{P}}((p_{n})_{\ast}(\overline{\nu}_{n}),\mu_{X}) < \delta_{n}$ for all $n \in \mathbb{N}$. Let $n \in \mathbb{N}$. Consider $\sigma_{n} \defeq \mathrm{me}_{\overline{\nu}_{n}}(f_{n} \circ p_{n},f\vert_{S_{n}})$ and $\tau_{n} \defeq \mathrm{me}_{\overline{\nu}_{n}}(f'_{n} \circ p_{n},f\vert_{S_{n}})$. Note that \begin{displaymath}
	B_{n} \defeq \{ x \in X \mid \vert f_{n}(x) - f'_{n}(x) \vert \leq \sigma_{n} + \tau_{n} + \ell \delta_{n} \}
\end{displaymath} is a Borel subset of $X$ containing $B_{d_{X}}(C_{n},\delta_{n})$ for $C_{n} \defeq \{ x \in X \mid \vert f_{n}(x) - f'_{n}(x) \vert \leq \sigma_{n} + \tau_{n} \}$. Considering the Borel sets \begin{align*}
	& D_{n} \defeq \{ s \in S_{n} \mid \vert f_{n}(p_{n}(s)) - f(s) \vert \leq \sigma_{n} \} , & D'_{n} \defeq \{ s \in S_{n} \mid \vert f'_{n}(p_{n}(s)) - f(s) \vert \leq \tau_{n} \} ,
\end{align*} we observe that $D_{n} \cap D'_{n} \subseteq p_{n}^{-1}(C_{n})$, and therefore \begin{displaymath}
	\overline{\nu}_{n}(p_{n}^{-1}(C_{n})) \, \geq \, \overline{\nu}_{n}(D_{n} \cap D'_{n}) \, \geq \, 1 - \overline{\nu}_{n}(S_{n}\setminus D_{n}) - \overline{\nu}_{n}(S_{n}\setminus D'_{n}) \, \geq \, 1 - \sigma_{n} - \tau_{n} .
\end{displaymath} It follows that\begin{displaymath}
	\mu_{X}(B_{n}) \, \geq \, \mu_{X}(B_{d_{X}}(C_{n},\delta_{n})) \, \geq \, \overline{\nu}_{n}(p_{n}^{-1}(C_{n})) - \delta_{n} \, \geq \, 1 - \sigma_{n} - \tau_{n} - \delta_{n} .
\end{displaymath} This shows that $\mathrm{me}_{\mu_{X}}(\vert f_{n} - f'_{n} \vert,\mathbf{0}) \leq \sigma_{n} + \tau_{n} + \ell \delta_{n}$. Since this is true for arbitrary $n \in \mathbb{N}$, the definition of $T(f)$ and our choice of $(\delta_{n})_{n \in \mathbb{N}}$ imply that $\mathrm{me}_{\mu_{X}}(\vert f_{n} - f'_{n} \vert,\mathbf{0}) \longrightarrow 0$ as $n \to \infty$, i.e., $\vert f_{n} - f'_{n} \vert \longrightarrow \mathbf{0}$ in the measure $\mu_{X}$ as $n \to \infty$. Let $k \defeq 2\ell$. As~$\spt \mu_{X} = X$, the restriction of $\mathrm{me}_{\mu_{X}}$ to $\mathrm{C}(X)$ is a metric, wherefore the induced topology on $\mathrm{C}(X)$, i.e., the topology of convergence in $\mu_{X}$, is Hausdorff. By the Arzel\`{a}-Ascoli theorem, $\mathrm{Lip}_{k}^{k}(X,d_{X})$ is compact with respect to the topology of uniform convergence. Given that the latter contains the topology of convergence in $\mu_{X}$, the two topologies coincide on the set $\mathrm{Lip}_{k}^{k}(X,d_{X})$. Since \begin{displaymath}
	\{ \vert f_{n} - f'_{n} \vert \mid n \in \mathbb{N} \} \cup \{ \mathbf{0} \} \subseteq \mathrm{Lip}_{k}^{k}(X,d_{X}) ,
\end{displaymath} we conclude that $\vert f_{n} - f'_{n} \vert \longrightarrow \mathbf{0}$ uniformly as $n \to \infty$. This proves~\eqref{confluence}.

Fix a non-principal ultrafilter $\mathcal{F}$ on $\mathbb{N}$. Appealing to~\eqref{confluence} and the Arzel\`{a}-Ascoli theorem again, let us define \begin{displaymath}
	\Phi \colon \, \mathrm{Lip}^{\infty}(G,d) \, \longrightarrow \, \mathrm{Lip}(X,d_{X})
\end{displaymath} by setting \begin{displaymath}
	\Phi (f) \defeq \lim\nolimits_{n \to \mathcal{F}} f_{n} \qquad \left(f \in \mathrm{Lip}^{\infty}(G,d), \, (f_{n})_{n \in \mathbb{N}} \in T(f) \right) ,
\end{displaymath} where the ultrafilter convergence applies to the uniform topology. We will show that $\Phi$ is a homomorphism of unital $\mathbb{R}$-algebras. Evidently, $(\boldsymbol{r})_{n \in \mathbb{N}} \in T(\boldsymbol{r})$ and thus $\Phi (\boldsymbol{r}) = \lim_{n \to \mathcal{F}} \boldsymbol{r} = \boldsymbol{r}$ for any $r \in \mathbb{R}$. Let $f,f' \in \mathrm{Lip}^{\infty}(G,d)$. Pick any $(f_{n})_{n \in \mathbb{N}} \in T(f)$ and $(f'_{n})_{n \in \mathbb{N}} \in T(f')$, and choose $\ell \geq 1$ with $\{ f_{n} \mid n\in \mathbb{N} \} \cup \{ f'_{n} \mid n \in \mathbb{N} \} \subseteq \mathrm{Lip}_{\ell}^{\ell}(X,d_{X})$ and \begin{displaymath}
	\max\left\{ \Vert f \Vert_{\infty}, \, \Vert f' \Vert_{\infty} \right\} \, \leq \, \ell .
\end{displaymath} It is easy to check that $f_{n} + f'_{n} \in \mathrm{Lip}_{2\ell}^{2\ell}(X,d_{X})$ and $f_{n}f'_{n} \in \mathrm{Lip}_{2\ell^{2}}^{\ell^{2}}(X,d_{X})$ for every $n \in \mathbb{N}$. Let $\sigma_{n} \defeq \mathrm{me}_{\overline{\nu}_{n}}(f_{n} \circ p_{n},f\vert_{S_{n}})$ and $\tau_{n} \defeq \mathrm{me}_{\overline{\nu}_{n}}(f'_{n} \circ p_{n},f'\vert_{S_{n}})$ for $n \in \mathbb{N}$. For each $n \in \mathbb{N}$, \begin{align*}
	\{ s \in S_{n} \mid \, &\vert (f_{n}+f'_{n})(p_{n}(s)) - (f+f')(s) \vert > \sigma_{n} + \tau_{n} \} \\
	&\subseteq \{ s \in S_{n} \mid \vert f_{n}(p_{n}(s)) - f(s) \vert > \sigma_{n} \} \cup \{ s \in S_{n} \mid \vert f'_{n}(p_{n}(s)) - f'(s) \vert > \tau_{n} \}
\end{align*} and thus \begin{displaymath}
	\mathrm{me}_{\overline{\nu}_{n}}((f_{n}+f'_{n}) \circ p_{n},(f+f')\vert_{S_{n}}) \, \leq \, \sigma_{n} + \tau_{n} ,
\end{displaymath} as well as \begin{align*}
	\{ s \in S_{n} \mid \, &\vert (f_{n}f'_{n})(p_{n}(s)) - (ff')(s) \vert > \ell (\sigma_{n} + \tau_{n}) \} \\
	&\subseteq \{ s \in S_{n} \mid \vert f_{n}(p_{n}(s)) - f(s) \vert > \sigma_{n} \} \cup \{ s \in S_{n} \mid \vert f'_{n}(p_{n}(s)) - f'(s) \vert > \tau_{n} \} 
\end{align*} and therefore \begin{displaymath}
	\mathrm{me}_{\overline{\nu}_{n}}((f_{n}f'_{n}) \circ p_{n},(ff')\vert_{S_{n}}) \, \leq \, \max \{ \ell(\sigma_{n} + \tau_{n}), \sigma_{n} + \tau_{n} \} \, \leq \, \ell(\sigma_{n} + \tau_{n}) .
\end{displaymath} Hence, $(f_{n}+f'_{n})_{n \in \mathbb{N}} \in T(f+f')$ and $(f_{n}f'_{n})_{n \in \mathbb{N}} \in T(ff')$, which readily implies that \begin{displaymath}
	\Phi (f+f') \, = \, \lim\nolimits_{n \to \mathcal{F}} f_{n}+f'_{n} \, = \, \left( \lim\nolimits_{n\to \mathcal{F}} f_{n} \right) + \left( \lim\nolimits_{n\to \mathcal{F}} f'_{n} \right) \, = \, \Phi (f) + \Phi(f')
\end{displaymath} and likewise \begin{displaymath}
	\Phi (ff') \, = \, \lim\nolimits_{n \to \mathcal{F}} f_{n}f'_{n} \, = \, \left( \lim\nolimits_{n\to \mathcal{F}} f_{n} \right) \left( \lim\nolimits_{n\to \mathcal{F}} f'_{n} \right) \, = \, \Phi (f)\Phi(f') .
\end{displaymath} This shows that $\Phi$ is indeed a homomorphism between unital $\mathbb{R}$-algebras.

Next let us prove that \begin{equation}\tag{iii}\label{quotient}
	\forall f \in \mathrm{Lip}(X,d_{X}) \colon \qquad \Vert f \Vert_{\infty} \, = \, \min \{ \Vert f^{\ast} \Vert_{\infty} \mid f^{\ast} \in \Phi^{-1}(f) \} .
\end{equation} Note that this will imply that $\Phi$ is surjective. We start our proof of~\eqref{quotient} by observing that $\Phi$ is contractive with respect to the supremum norm, i.e., \begin{displaymath}
	\forall f \in \mathrm{Lip}^{\infty}(G,d) \colon \qquad \Vert \Phi(f) \Vert_{\infty} \, \leq \, \Vert f \Vert_{\infty} .
\end{displaymath} Indeed, if $f \in \mathrm{Lip}^{\infty}(G,d)$ and $(f_{n})_{n \in \mathbb{N}} \in T(f)$, then it is easily checked that \begin{displaymath}
	((f_{n} \wedge \Vert f \Vert_{\infty}) \vee (-\Vert f \Vert_{\infty}))_{n \in \mathbb{N}} \, \in \, T(f) ,
\end{displaymath} and thus \begin{displaymath}
	\Phi(f) \, = \, \lim\nolimits_{n \to \mathcal{F}} \left( (f_{n} \wedge \Vert f \Vert_{\infty}) \vee (-\Vert f \Vert_{\infty}) \right) ,
\end{displaymath} whence $\Vert \Phi (f) \Vert_{\infty} \leq \Vert f \Vert_{\infty}$ as desired. Furthermore, since $d$ generates the topology of $G$, there exists $\epsilon > 0$ such that $B_{d}(e,\epsilon) \subseteq U$. By right-invariance of $d$, it follows that \begin{displaymath}
	B_{d}(S_{m},\epsilon) \cap B_{d}(S_{n},\epsilon ) \subseteq US_{m} \cap US_{n} = \emptyset
\end{displaymath} for any two distinct $m,n \in \mathbb{N}$. To prove~\eqref{quotient}, let $\ell \geq 1$ and $f \in \mathrm{Lip}_{\ell}(X,d_{X})$. Put~$c \defeq \Vert f \Vert_{\infty}$. According to~$(2)$, there exists a sequence of functions $f_{n} \in \mathrm{Lip}_{1}(S_{n},d_{n})$ $(n \in \mathbb{N})$ such that $\mathrm{me}_{\overline{\nu}_{n}}((\ell^{-1}f) \circ p_{n}, f_{n}) \longrightarrow 0$ as $n \to \infty$. For every $n \in \mathbb{N}$, \begin{displaymath}
	f'_{n} \, \defeq \, ((\ell f_{n}) \wedge c ) \vee (- c) \, \in \, \mathrm{Lip}_{\ell}^{c}(S_{n},d_{n})
\end{displaymath} and \begin{displaymath}
	\mathrm{me}_{\overline{\nu}_{n}}(f \circ p_{n},f'_{n}) \, \leq \, \mathrm{me}_{\overline{\nu}_{n}}(f \circ p_{n},\ell f_{n})  \, \leq \, \ell \mathrm{me}_{\overline{\nu}_{n}}((\ell^{-1} f) \circ p_{n},f_{n}) .
\end{displaymath} Consider the set $S \defeq \bigcup_{n \in \mathbb{N}} S_{n}$ and define $f' \colon S \to \mathbb{R}$ by setting $f'\vert_{S_{n}} = f'_{n}$ for every $n \in \mathbb{N}$. Then $f' \in \mathrm{Lip}^{c}_{k}(S,d\!\!\upharpoonright_{S})$ for $k \defeq \max\{ \ell, 2c \epsilon^{-1}\}$, since $B_{d}(S_{m},\epsilon) \cap B_{d}(S_{n},\epsilon ) = \emptyset$ for any two distinct $m,n \in \mathbb{N}$. Utilizing a standard construction, we define $f^{\ast} \colon G \to \mathbb{R}$ by \begin{displaymath}
	f^{\ast}(g) \defeq \left( \left( \inf\nolimits_{s \in S} f'(s) + k d(g,s) \right) \wedge c \right) \vee (-c) \qquad (g \in G)
\end{displaymath} and observe that $f^{\ast} \in \mathrm{Lip}^{c}_{k}(G,d)$. Since moreover $f^{\ast}\vert_{S} = f'$, it follows that \begin{displaymath}
	\mathrm{me}_{\overline{\nu}_{n}}(f \circ p_{n},f^{\ast}\vert_{S_{n}}) \, = \, \mathrm{me}_{\overline{\nu}_{n}}(f \circ p_{n},f'_{n}) \leq \ell \mathrm{me}_{\overline{\nu}_{n}}((\ell^{-1} f) \circ p_{n},f_{n}) 
\end{displaymath} for all $n \in \mathbb{N}$. Hence, $(f)_{n \in \mathbb{N}} \in T(f^{\ast})$ and therefore $\Phi (f^{\ast}) = \lim_{n\to \mathcal{F}} f = f$. Finally, \begin{displaymath}
	\Vert f \Vert_{\infty} = \Vert \Phi(f^{\ast}) \Vert_{\infty} \leq \Vert f^{\ast} \Vert_{\infty} \leq c = \Vert f \Vert_{\infty} 
\end{displaymath} and thus $\Vert f^{\ast} \Vert_{\infty} = \Vert f \Vert_{\infty}$, as desired.

Let us consider the unique continuous linear operator $\overline{\Phi} \colon \mathrm{RUCB}(G) \to \mathrm{C}(X)$ extending~$\Phi$. Since $\Phi$ is a homomorphism of unital $\mathbb{R}$-algebras, so is $\overline{\Phi}$. Moreover, $\overline{\Phi}$ is surjective by~\eqref{quotient}, Lemma~\ref{lemma:density}, and the density of $\mathrm{Lip}(X,d_{X})$ in $\mathrm{C}(X)$. The map \begin{displaymath}
	\nu \colon \, \mathrm{RUCB}(G) \, \longrightarrow \, \mathbb{R} , \qquad f \, \longmapsto \, \lim\nolimits_{n \to \mathcal{F}} \int f \, d\nu_{n} ,
\end{displaymath} is a left-invariant mean, cf.~\cite[Proof of Theorem~3.2]{SchneiderThom}. We will show that \begin{equation}\tag{iv}\label{convergence}
	\forall f \in \mathrm{RUCB}(G) \colon \qquad \int \overline{\Phi}(f) \,d\mu_{X} = \nu (f) .
\end{equation} Since both $\nu$ and $\mathrm{RUCB}(G) \to \mathbb{R}, \, f \mapsto \int \overline{\Phi}(f) \,d\mu_{X}$ are continuous linear maps and $\mathrm{Lip}^{\infty}(G,d)$ is a dense linear subspace of $\mathrm{RUCB}(G)$, it suffices to prove that \begin{displaymath}
	\forall f \in \mathrm{Lip}^{\infty}(G,d) \colon \qquad \int \Phi(f) \,d\mu_{X} = \nu (f) .
\end{displaymath} Let $f \in \mathrm{Lip}^{\infty}(G,d)$ and $(f_{n})_{n \in \mathbb{N}} \in T(f)$. Then $c \defeq \Vert f \Vert_{\infty} \vee \sup_{n \in \mathbb{N}} \Vert f_{n} \Vert_{\infty} < \infty$. Since $(d_{X})_{\mathrm{P}}$ metrizes the weak topology on $\mathrm{P}(X)$, assertion~$(1)$ implies that \begin{displaymath}
	\epsilon_{n} \defeq \left\lvert \int \Phi (f) \, d\mu_{X} - \int \Phi (f) \, d(p_{n})_{\ast}(\overline{\nu}_{n}) \right\rvert \, \longrightarrow \, 0 \qquad (n \to \infty) .
\end{displaymath} Considering that $\mathcal{F}$ is non-principal and, moreover, \begin{align*}
	\left\lvert \int \Phi(f) \, d\mu_{X} - \int f \,d\nu_{n} \right\rvert \, &\leq \, \left\lvert \int \Phi(f) \, d\mu_{X} - \int \Phi (f) \,d(p_{n})_{\ast}(\overline{\nu}_{n}) \right\rvert \\
		& \qquad \qquad \qquad \qquad + \left\lvert \int \Phi(f) \, d(p_{n})_{\ast}(\overline{\nu}_{n}) - \int f_{n} \,d(p_{n})_{\ast}(\overline{\nu}_{n}) \right\rvert \\
		& \qquad \qquad \qquad \qquad + \left\lvert \int f_{n} \circ p_{n} \, d\overline{\nu}_{n} - \int f\vert_{S_{n}} \,d\overline{\nu}_{n} \right\rvert \\
		& \leq \, \epsilon_{n} + \Vert \Phi (f) - f_{n} \Vert_{\infty} + (1+2c)\mathrm{me}_{\overline{\nu}_{n}}(f_{n} \circ p_{n},f\vert_{S_{n}})
\end{align*} for all $n \in \mathbb{N}$, we conclude that $\left\lvert \int \Phi(f) \, d\mu_{X} - \int f \,d\nu_{n} \right\rvert \longrightarrow 0$ as $n \to \mathcal{F}$. This proves~\eqref{convergence}.

Finally, let us consider the continuous map $\psi \colon X \to \mathrm{S}(G)$ given by \begin{displaymath}
	\psi(x)(f) \defeq \overline{\Phi} (f) (x) \qquad (x \in X, \, f \in \mathrm{RUCB}(G)) .
\end{displaymath} Since $\overline{\Phi}$ is onto, $\psi$ is a topological embedding. To see that $\psi_{\ast}(\mu_{X})$ is $G$-invariant, let us note the following: for every $f \in \mathrm{C}(\mathrm{S}(G))$, since $f(\xi) = \xi (f \circ \eta_{G})$ for all $\xi \in \mathrm{S}(G)$, we have \begin{displaymath}
	f(\psi (x)) = \psi(x) (f \circ \eta_{G}) = \overline{\Phi} (f \circ \eta_{G})(x)
\end{displaymath} for all $x \in X$, i.e., $f \circ \psi = \overline{\Phi} (f \circ \eta_{G})$. Also, being a Borel probability measure on a metrizable compact space, $\mu_{X}$ is regular. As $\psi$ is a continuous map between compact Hausdorff spaces, $\psi_{\ast}(\mu_{X})$ must be regular as well. Therefore, in order to establish the $G$-invariance of $\psi_{\ast}(\mu_{X})$, it suffices to observe that, for all $f \in \mathrm{C}(\mathrm{S}(G))$ and $g \in G$, \begin{align*}
	\int & f \circ \tau_{g} \, d\psi_{\ast}(\mu_{X}) \ = \ \int f \circ \tau_{g} \circ \psi \, d\mu_{X} \ = \ \int \overline{\Phi}(f \circ \tau_{g} \circ \eta_{G}) \, d\mu_{X} \ = \ \int \overline{\Phi} (f \circ \eta_{G} \circ \lambda_{g}) \, d\mu_{X} \\
	&\stackrel{\eqref{convergence}}{=} \, \nu(f \circ \eta_{G} \circ \lambda_{g}) \, = \, \nu(f \circ \eta_{G}) \, \stackrel{\eqref{convergence}}{=} \, \int \overline{\Phi}(f \circ \eta_{G}) \, d\mu_{X} \ = \ \int f \circ \psi \, d\mu_{X} \ = \ \int f \, d\psi_{\ast}(\mu_{X}) ,
\end{align*} where $\tau_{g} \colon \mathrm{S}(G) \to \mathrm{S}(G), \, \xi \mapsto g\xi$. This completes the proof. \end{proof}

\section{Observable diameters}\label{section:observable.diameters}

In this section we will prove Theorem~\ref{theorem:observable.diameters} and then deduce Corollary~\ref{corollary:observable.diameters}. For a start, let us briefly recall Gromov's concept of observable diameters~\cite[Chapter~3$\tfrac{1}{2}$]{Gromov99}. For further reading, we refer to~\cite{Gromov99,ledoux,ShioyaBook}.

\begin{definition}\label{definition:observable.diameters} Let $\alpha > 0$. The \emph{$\alpha$-partial diameter} of a Borel probability measure $\nu$ on $\mathbb{R}$ is \begin{displaymath}
	\mathrm{PartDiam}(\nu,1-\alpha) \defeq \inf \{ \diam (B,d_{\mathbb{R}}) \mid B\subseteq \mathbb{R} \textnormal{ Borel, } \nu(B) \geq 1 - \alpha \} ,
\end{displaymath} where $d_{\mathbb{R}}$ denotes the Euclidean metric on $\mathbb{R}$. Given any Borel probability measure $\mu$ on a topological space $X$ and a continuous pseudo-metric~$d$ on $X$, we refer to the quantity \begin{displaymath}
	\mathrm{ObsDiam}(X,d,\mu;-\alpha) \defeq \sup \{ \mathrm{PartDiam}(f_{\ast}(\mu),1-\alpha) \mid f \in \mathrm{Lip}_{1}(X,d) \} .
\end{displaymath} as the corresponding \emph{$\alpha$-observable diameter}. \end{definition}

\begin{remark}\label{remark:monotonicity} Let $\alpha > 0$. If $\mu$ is a Borel probability measure on a topological space $X$ and $d_{0} \leq d_{1}$ are continuous pseudo-metrics on $X$, then \begin{displaymath}
	\mathrm{ObsDiam}(X,d_{0},\mu;-\alpha) \, \leq \, \mathrm{ObsDiam}(X,d_{1},\mu;-\alpha) .
\end{displaymath} In particular, if $G$ is a topological group acting continuously on a compact Hausdorff space~$X$ and $d$ is a continuous pseudo-metric on $X$, then \begin{displaymath}
	\sup\nolimits_{x \in X} \mathrm{ObsDiam}(G,d_{G,x},\mu; -\alpha) \, \leq \, \mathrm{ObsDiam}(G,d_{G,X},\mu; -\alpha)
\end{displaymath} for every Borel probability measure $\mu$ on $G$. \end{remark}

\begin{proof}[Proof of Theorem~\ref{theorem:observable.diameters}] Let $X$ be a $G$-flow equipped with a $G$-invariant regular Borel probability measure $\nu$. Consider a continuous pseudo-metric $d$ on $X$ and let \begin{displaymath}
	D \defeq \sup\nolimits_{\alpha > 0} \liminf\nolimits_{i \to I} \sup\nolimits_{x \in X} \mathrm{ObsDiam}\! \left(G,d_{G,x},\mu_{i}; -\alpha\right) .
\end{displaymath} Let $E \subseteq G$ be finite and let $\epsilon > 0$. We show that \begin{displaymath}
	\int \sup\nolimits_{g \in E} d(x,gx) \, d\nu (x) \, \leq \, D + \epsilon .
\end{displaymath} To this end, let $U \defeq B_{d_{G,X}}\!\left(e,\tfrac{\epsilon}{8}\right)$ and pick a right-uniformly continuous function $p \colon G \to [0,1]$ with $p(e) = 1$ and $p(x) = 0$ for all $x \in G \setminus U$. For any $S \subseteq G$, define $p_{S} \colon G \to [0,1]$ by \begin{displaymath}
	p_{S}(x) \defeq \sup\nolimits_{s \in S} p\!\left( xs^{-1} \right) \qquad (x \in G) .
\end{displaymath} It is easy to see that $\{ p_{S} \mid S \subseteq G \}$ belongs to $\mathrm{RUEB}(G)$. Let $\delta \defeq \diam (X,d) + 1$. Since the net $(\mu_{i})_{i \in I}$ UEB-converges to invariance over $G$, we find $i_{0} \in I$ such that \begin{equation}\tag{$\ast$}\label{invariance}
	\forall i \in I, \ i \geq i_{0} \ \forall g \in E \colon \quad \sup\nolimits_{S \subseteq G} \left\lvert \int p_{S} \, d\mu_{i} - \int p_{S} \circ \lambda_{g} \, d\mu_{i} \right\rvert \, \leq \, \tfrac{\epsilon}{8\delta (\vert E \vert + 1)} .
\end{equation} Since $X$ is compact, there exists a finite non-empty subset $F \subseteq \mathrm{Lip}_{1}(X,d)$ such that \begin{displaymath}
	\forall x,y \in X \colon \qquad d(x,y) \, \leq \, \sup\nolimits_{f \in F} \vert f(x) - f(y) \vert + \tfrac{\epsilon}{2} ,
\end{displaymath} cf.~\cite[Exercise~7.4.13]{PestovBook}. For the rest of the proof, fix any $i \in I$ such that $i \geq i_{0}$ and \begin{displaymath}
	\sup\nolimits_{x \in X} \mathrm{ObsDiam}\!\left(G,d_{G,x},\mu_{i};-\tfrac{\epsilon}{8\delta \vert F \vert (\vert E \vert + 1)}\right) \, \leq \, D + \tfrac{\epsilon}{8} .
\end{displaymath} Let us prove that \begin{equation}\tag{$\ast \ast$}\label{aim}
	\sup\nolimits_{x \in X} \int \sup\nolimits_{g \in E} \sup\nolimits_{f \in F} \vert f(hx) - f(ghx) \vert \, d\mu_{i}(h) \, \leq \, D + \tfrac{\epsilon}{2} .
\end{equation} Let $x \in X$. Since for each $f \in F$ the map $f_{x} \colon G \to \mathbb{R} , \, h \mapsto f(hx)$ belongs to $\mathrm{Lip}_{1}(G,d_{G,x})$, our choice of $i$ ensures that \begin{displaymath}
	\sup\nolimits_{f \in F} \mathrm{PartDiam} \! \left( (f_{x})_{\ast}(\mu_{i}), 1-\tfrac{\epsilon}{8\delta \vert F \vert (\vert E \vert + 1)} \right) \, \leq \, D + \tfrac{\epsilon}{8} .
\end{displaymath} Hence, for each $f \in F$ there exists a Borel set $B_{f} \subseteq G$ such that $\mu_{i}(B_{f}) \geq 1 - \tfrac{\epsilon}{8\delta \vert F \vert (\vert E \vert + 1)}$ and $\diam (f_{x}(B_{f}),d_{\mathbb{R}}) \leq D + \tfrac{\epsilon}{8}$. Considering the Borel set $B \defeq \bigcap_{f \in F} B_{f}$, we deduce that $\mu_{i}(B) \geq 1 - \tfrac{\epsilon}{8\delta(\vert E \vert + 1)}$ and moreover $\diam (f_{x}(B),d_{\mathbb{R}}) \leq D + \tfrac{\epsilon}{8}$ for each $f \in F$. The former implies that $\int p_{B} \, d\mu_{i} \geq  1 -\tfrac{\epsilon}{8\delta(\vert E \vert + 1)}$. Thus, \eqref{invariance} asserts that $\int p_{B} \circ \lambda_{g} \, d\mu_{i} \geq 1 -\tfrac{\epsilon}{4\delta (\vert E \vert + 1)}$ for each $g \in G$. Since $UB = B_{d_{G,X}}\!\left(B,\tfrac{\epsilon}{8}\right)$ by right invariance of $d_{G,X}$, it readily follows that $\mu_{i}\! \left( g^{-1}B_{d_{G,X}}\!\left(B,\tfrac{\epsilon}{8}\right) \right) \geq 1 -\tfrac{\epsilon}{4\delta (\vert E \vert + 1)}$. Considering the Borel set \begin{displaymath}
	C \defeq B \cap \bigcap\nolimits_{g \in E} g^{-1}B_{d_{G,X}}\!\left(B,\tfrac{\epsilon}{8}\right) \! ,
\end{displaymath} we now conclude that $\mu_{i}(C) \geq 1-\tfrac{\epsilon}{4\delta}$. Furthermore, \begin{displaymath}
	\sup\nolimits_{g \in E} \sup\nolimits_{f \in F} \vert f(hx) - f(ghx) \vert \, \leq \, D + \tfrac{\epsilon}{4} .
\end{displaymath} for all $h \in C$. To see this, let $h \in C$. For each $g \in E$, there is $h_{g} \in B$ with $d_{G,X}(h_{g},gh) \leq \tfrac{\epsilon}{8}$. Hence, \begin{align*}
	\vert f(hx) - f(ghx) \vert \, &\leq \, \vert f(hx) - f(h_{g}x) \vert + \vert f(h_{g}x) - f(ghx) \vert \\
	&\leq \, \left(D + \tfrac{\epsilon}{8}\right) + d_{G,X}(h_{g},gh) \, \leq \, D + \tfrac{\epsilon}{4} 
\end{align*} for all $g \in E$ and $f \in F$, as desired. Consequently, since $F$ is contained in $\mathrm{Lip}_{1}(X,d)$, \begin{align*}
	\int \sup\nolimits_{g \in E} \sup\nolimits&_{f \in F} \vert f(hx) - f(ghx) \vert \, d\mu_{i}(h) \\ 
	&= \, \int_{C} \sup\nolimits_{g \in E} \sup\nolimits_{f \in F} \vert f(hx) - f(ghx) \vert \, d\mu_{i}(h) + \delta \mu_{i}(G\setminus C) \\
	&\leq \, \left( D + \tfrac{\epsilon}{4} \right) + \tfrac{\epsilon}{4} \, \leq \, D + \tfrac{\epsilon}{2} .
\end{align*} This proves~\eqref{aim}. Using the $G$-invariance of $\nu$ along with Fubini's theorem, we deduce that \begin{align*}
	\int \sup\nolimits_{g \in E} \sup\nolimits_{f \in F} \vert & f(x) - f(gx) \vert \, d\nu (x) \, = \, \int \int \sup\nolimits_{g \in E} \sup\nolimits_{f \in F} \vert f(hx) - f(ghx) \vert \, d\nu (x) \, d\mu_{i}(h) \\
	&= \, \int \int \sup\nolimits_{g \in E} \sup\nolimits_{f \in F} \vert f(hx) - f(ghx) \vert\, d\mu_{i}(h) \, d\nu (x) \, \stackrel{\eqref{aim}}{\leq} \, D + \tfrac{\epsilon}{2} 
\end{align*} and therefore \begin{displaymath}
	\int  \sup\nolimits_{g \in E} d(x,gx) \, d\nu(x) \, \leq \, \int  \sup\nolimits_{g \in E} \sup\nolimits_{f \in F} \vert f(x) - f(gx) \vert \, d\nu (x) + \tfrac{\epsilon}{2} \, \leq \, D + \epsilon .
\end{displaymath} This completes the proof. \end{proof}

\begin{proof}[Proof of Corollary~\ref{corollary:observable.diameters}] Let $X$ be a $G$-flow. Fix a continuous pseudo-metric $d$ on $X$ and let \begin{displaymath}
	D \defeq \sup\nolimits_{\alpha > 0} \liminf\nolimits_{i \to I} \sup\nolimits_{x \in X} \mathrm{ObsDiam}\! \left(G,d_{G,x},\mu_{i}; -\alpha\right) .
\end{displaymath} Since $G$ is amenable due to Theorem~\ref{theorem:topological.day}, there exists a $G$-invariant regular Borel probability measure $\nu$ on $X$. Note that, for every finite subset $E \subseteq G$ and every $\epsilon > 0$, there exists some $x \in X$ with $\sup_{g \in E} d(x,gx) < D + \epsilon$. Otherwise, there would exist a finite subset $E \subseteq G$ and some $\epsilon > 0$ such that $\sup_{g \in E} d(x,gx) \geq D + \epsilon$ for all $x \in X$, whence \begin{displaymath}
	\int \sup\nolimits_{g \in E} d(x,gx) \, d\nu(x) \geq D + \epsilon ,
\end{displaymath} contradicting the conclusion of Theorem~\ref{theorem:observable.diameters}. Appealing to the compactness of $X$, hence we deduce the existence of a point $x \in X$ with $\sup_{g \in G} d(x,gx) \leq D$. \end{proof}

We conclude this section with some remarks about further consequences of Theorem~\ref{theorem:observable.diameters}. For this purpose, we briefly clarify the connection between observable diameters and the L\'evy property in uniform spaces, cf.~\cite[Definition~2.6]{pestov02}.

\begin{definition} Let $X$ be a uniform space. For an entourage $U$ of $X$, let \begin{displaymath}
	U[A] \defeq \{ y \in X \mid \exists x \in A \colon \, (x,y) \in U \}\qquad (A \subseteq X) .
\end{displaymath} A net $(\mu_{i})_{i \in I}$ of Borel probability measures on $X$ is called a \emph{L\'evy net} in $X$ if, for every family $(B_{i})_{i \in I}$ of Borel subsets of $X$ and any open entourage $U$ of $X$, \begin{displaymath}
	\qquad \qquad \liminf\nolimits_{i \to I} \mu_{i}(B_{i}) > 0 \quad \Longrightarrow \quad \lim\nolimits_{i \to I} \mu_{i}(U[B_{i}]) = 1 .
\end{displaymath}  \end{definition}

Let us recall that every measurable real-valued function $f \colon X \to \mathbb{R}$ on a probability measure space $(X,\mathscr{B},\mu)$ admits a (not necessarily unique) \emph{median}, i.e., a real number $m \in \mathbb{R}$ with \begin{displaymath}
\mu (\{ x \in X \mid f(x) \geq m \}) \, \geq \, \tfrac{1}{2} \, \leq \, \mu (\{ x \in X \mid f(x) \leq m \}) .
\end{displaymath} We will need the following well-known fact. 

\begin{lem}[\cite{GromovMilman}, 2.5]\label{lemma:concentration} Let $X$ be a uniform space, let $d$ be a uniformly continuous pseudo-metric on $X$, and let $(\mu_{i})_{i \in I}$ be a L\'evy net of Borel probability measures on $X$. For each pair $(i,f) \in I \times \mathrm{Lip}_{1}(X,d)$, let $m_{i}(f)$ be a median of $f$ with respect to $\mu_{i}$. For every $\epsilon > 0$, \begin{displaymath}
	\sup\nolimits_{f \in \mathrm{Lip}_{1}(X,d)} \mu_{i}(\{ x \in X \mid \vert f(x) - m_{i}(f) \vert > \epsilon \}) \, \longrightarrow \, 0 \quad (i \to I) .
\end{displaymath} \end{lem}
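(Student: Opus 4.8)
The plan is to run the classical concentration-via-medians argument, the only extra care being to reduce the supremum over $\mathrm{Lip}_{1}(X,d)$ to a single $I$-indexed family of functions, so that the L\'evy hypothesis applies directly. Suppose, towards a contradiction, that the conclusion fails for some $\epsilon > 0$. Negating the convergence to $0$ of the suprema produces $\theta > 0$ and a cofinal subset $J \subseteq I$ together with functions $f_{i} \in \mathrm{Lip}_{1}(X,d)$ $(i \in J)$ such that
\[
	\mu_{i}\!\left( \left\{ x \in X \mid \vert f_{i}(x) - m_{i}(f_{i}) \vert > \epsilon \right\} \right) \, > \, \theta \qquad (i \in J) ;
\]
for the remaining $i \in I \setminus J$, let $f_{i} \in \mathrm{Lip}_{1}(X,d)$ be chosen arbitrarily. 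The role of the medians is that, setting
\[
	B_{i} \defeq \left\{ x \in X \mid f_{i}(x) \leq m_{i}(f_{i}) \right\} , \qquad B'_{i} \defeq \left\{ x \in X \mid f_{i}(x) \geq m_{i}(f_{i}) \right\} ,
\]
one has $\mu_{i}(B_{i}) \geq \tfrac{1}{2}$ and $\mu_{i}(B'_{i}) \geq \tfrac{1}{2}$ for every $i \in I$, so in particular $\liminf_{i \to I} \mu_{i}(B_{i}) \geq \tfrac{1}{2} > 0$ and likewise for $(B'_{i})_{i \in I}$. (Each $B_{i}$, $B'_{i}$ is closed in $X$, hence Borel, since $f_{i}$ is continuous for the topology of $X$.)

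Next I would fix the entourage to feed into the L\'evy property. As $d$ is a uniformly continuous pseudo-metric on $X$, the set $\left\{ (x,y) \in X \times X \mid d(x,y) < \epsilon \right\}$ contains an entourage of $X$; since the open entourages form a base of the uniformity of $X$, we may pick an open entourage $U$ of $X$ with $d(x,y) < \epsilon$ whenever $(x,y) \in U$. The key point is then that, for every $i \in I$, the $1$-Lipschitz property of $f_{i}$ gives
\[
	U[B_{i}] \, \subseteq \, \left\{ x \in X \mid f_{i}(x) < m_{i}(f_{i}) + \epsilon \right\} , \qquad U[B'_{i}] \, \subseteq \, \left\{ x \in X \mid f_{i}(x) > m_{i}(f_{i}) - \epsilon \right\} :
\]
indeed, if $y \in U[B_{i}]$, then $(x,y) \in U$ for some $x \in B_{i}$, so $f_{i}(y) \leq f_{i}(x) + d(x,y) < m_{i}(f_{i}) + \epsilon$, and symmetrically for $B'_{i}$. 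Hence $U[B_{i}] \cap U[B'_{i}] \subseteq \left\{ x \in X \mid \vert f_{i}(x) - m_{i}(f_{i}) \vert < \epsilon \right\}$.

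Finally I would apply the L\'evy property to the families $(B_{i})_{i \in I}$ and $(B'_{i})_{i \in I}$ together with the open entourage $U$, yielding $\mu_{i}(U[B_{i}]) \to 1$ and $\mu_{i}(U[B'_{i}]) \to 1$ as $i \to I$. Combined with the inclusion above and a union bound,
\[
	\mu_{i}\!\left( \left\{ x \in X \mid \vert f_{i}(x) - m_{i}(f_{i}) \vert \geq \epsilon \right\} \right) \, \leq \, \bigl( 1 - \mu_{i}(U[B_{i}]) \bigr) + \bigl( 1 - \mu_{i}(U[B'_{i}]) \bigr) \, \longrightarrow \, 0 \qquad (i \to I) .
\]
Thus there is $i_{0} \in I$ with $\mu_{i}\bigl( \{ \vert f_{i} - m_{i}(f_{i}) \vert \geq \epsilon \} \bigr) < \theta$ for all $i \geq i_{0}$; choosing $i \in J$ with $i \geq i_{0}$ (possible, as $J$ is cofinal) contradicts the displayed strict inequality $> \theta$. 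Therefore the asserted convergence holds for every $\epsilon > 0$. I do not expect a genuine obstacle here — this is the textbook argument — but the point requiring care is the interaction with the net: one must negate the convergence of the suprema correctly into a cofinal $J$ plus a uniform threshold $\theta$, and then apply the L\'evy property, which is a statement about the whole net $(\mu_{i})_{i \in I}$, to families $(B_{i})_{i \in I}$ and $(B'_{i})_{i \in I}$ indexed by all of $I$ — which is exactly why $f_{i}$ must be chosen (arbitrarily) also for $i \notin J$.
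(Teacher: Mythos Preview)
Your proof is correct and follows essentially the same approach as the paper: apply the L\'evy property to the sublevel and superlevel sets at the median, use the $1$-Lipschitz bound together with a suitably small entourage to control the $U$-neighbourhoods, and conclude by a union bound. The only organisational difference is that you set up the contradiction at the outset (producing the cofinal $J$ and the explicit family $(f_{i})_{i \in I}$), whereas the paper argues directly and invokes the same cofinality-versus-L\'evy contradiction only to justify the existence of a suitable $i_{0}$; the underlying idea is identical.
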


\begin{proof} We include the proof for the sake of convenience. Let $\epsilon > 0$. Since $d$ is uniformly continuous, there is a symmetric open entourage $U$ of $X$ such that $d(x,y) \leq \epsilon$ for all $(x,y) \in U$. For all $(i,f) \in I \times \mathrm{Lip}_{1}(X,d)$, we conclude that $U[A_{i}(f)] \subseteq B_{i}(f)$ and $U[A'_{i}(f)] \subseteq B'_{i}(f)$ where \begin{align*}
	& A_{i}(f) \defeq \{ x \in X \mid f(x) \leq m_{i}(f) \} , & B_{i}(f) \defeq \{ x \in X \mid f(x) \leq m_{i}(f) + \epsilon \} , \\
	& A'_{i}(f) \defeq \{ x \in X \mid f(x) \geq m_{i}(f) \} , & B'_{i}(f) \defeq \{ x \in X \mid f(x) \geq m_{i}(f) - \epsilon \} .
\end{align*} Hence, $U[A_{i}(f)] \cap U[A'_{i}(f)] \subseteq B_{i}(f) \cap B'_{i}(f)$ for all $(i,f) \in i \times \mathrm{Lip}_{1}(X,d)$. We will show that \begin{displaymath}
	\sup\nolimits_{f \in \mathrm{Lip}_{1}(X,d)} \mu_{i}(X\setminus (B_{i}(f) \cap B'_{i}(f))) \, \longrightarrow \, 0 \quad (i \to I) .
\end{displaymath} Let $\delta > 0$. For each pair $(i,f) \in I \times \mathrm{Lip}_{1}(X,d)$, our hypothesis on $m_{i}(f)$ asserts that \begin{displaymath}
	\min \{ \mu_{i}(A_{i}(f)),\, \mu_{i}(A'_{i}(f)) \} \, \geq \, \tfrac{1}{2} .
\end{displaymath} Since $(\mu_{i})_{i \in I}$ is a L\'evy net in $X$, there exists $i_{0} \in I$ such that \begin{displaymath}
	\forall i \in I , \, i \geq i_{0} \ \forall f \in \mathrm{Lip}_{1}(X,d) \colon \qquad \mu_{i}(U[A_{i}(f)]) \, \geq \, 1 - \tfrac{\delta}{2} .
\end{displaymath} Otherwise, the subset \begin{displaymath}
	\left\{ i \in I \left| \, \exists f \in \mathrm{Lip}_{1}(X,d) \colon \, \mu_{i}(U[A_{i}(f)]) < 1 - \tfrac{\delta}{2} \right\} \right.
\end{displaymath} would be cofinal in $I$, which is easily seen to contradict the L\'evy property. Likewise, there exists some $i_{1} \in I$ with $i_{1} \geq i_{0}$ such that \begin{displaymath}
	\forall i \in I , \, i \geq i_{1} \ \forall f \in \mathrm{Lip}_{1}(X,d) \colon \qquad \mu_{i}(U[A'_{i}(f)]) \, \geq \, 1 - \tfrac{\delta}{2} .
\end{displaymath} Consequently, if $i \in I$ with $i \geq i_{1}$, then \begin{displaymath}
	\mu_{i} (B_{i}(f) \cap B'_{i}(f)) \, \geq \, \mu_{i}(U[A_{i}(f)] \cap U[A_{i}'(f)]) \, \geq \, 1 - \delta
\end{displaymath} for every $f \in \mathrm{Lip}_{1}(X,d)$, which means that $\sup\nolimits_{f \in \mathrm{Lip}_{1}(X,d)} \mu_{i}(X\setminus (B_{i}(f) \cap B'_{i}(f))) \leq \delta$.  \end{proof}

The following is a fairly well known fact about $mm$-spaces, see e.g.~\cite[Proposition~5.7]{ShioyaBook}, adapted to uniform spaces in a straightforward manner.

\begin{prop}\label{proposition:levy} Let $(\mu_{i})_{i \in I}$ be a net of Borel probability measures on a uniform space $X$. The following are equivalent. \begin{itemize}
	\item[$(1)$] $(\mu_{i})_{i \in I}$ is a L\'evy net in $X$. \smallskip
	\item[$(2)$] For every uniformly continuous pseudo-metric $d$ on $X$ and every $\alpha > 0$, \begin{displaymath}
					\qquad \lim\nolimits_{i \to I} \mathrm{ObsDiam}(X,d,\mu_{i}; -\alpha) \, = \, 0.
				\end{displaymath}
	\item[$(3)$] For every bounded uniformly continuous pseudo-metric $d$ on $X$ and every $\alpha > 0$, \begin{displaymath}
					\qquad \lim\nolimits_{i \to I} \mathrm{ObsDiam}(X,d,\mu_{i}; -\alpha) \, = \, 0.
				\end{displaymath}
\end{itemize} \end{prop}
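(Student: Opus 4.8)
# Proof Proposal for Proposition~\ref{proposition:levy}

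\textbf{Overall strategy.} The plan is to prove the cycle of implications $(2)\Rightarrow(3)\Rightarrow(1)\Rightarrow(2)$. The implication $(2)\Rightarrow(3)$ is immediate, since every bounded uniformly continuous pseudo-metric is in particular a uniformly continuous pseudo-metric, so the hypothesis in $(2)$ is stronger than that in $(3)$. The implications $(1)\Rightarrow(2)$ and $(3)\Rightarrow(1)$ are the substantive ones.

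\textbf{From $(1)$ to $(2)$.} Suppose $(\mu_i)_{i\in I}$ is a L\'evy net, fix a uniformly continuous pseudo-metric $d$ on $X$ and fix $\alpha>0$; I must show $\mathrm{ObsDiam}(X,d,\mu_i;-\alpha)\to 0$. Let $\epsilon>0$. Apply Lemma~\ref{lemma:concentration} with the value $\tfrac{\epsilon}{2}$: choosing for each pair $(i,f)\in I\times\mathrm{Lip}_1(X,d)$ a median $m_i(f)$ of $f$ with respect to $\mu_i$, there is $i_0\in I$ so that for all $i\geq i_0$ and all $f\in\mathrm{Lip}_1(X,d)$ one has $\mu_i(\{x\mid |f(x)-m_i(f)|>\tfrac{\epsilon}{2}\})<\min\{\alpha,\tfrac12\}$ (the minimum with $\tfrac12$ costing nothing, but convenient). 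Then for $i\geq i_0$ the Borel set $B_{i,f}\defeq\{x\mid |f(x)-m_i(f)|\leq\tfrac{\epsilon}{2}\}$ satisfies $\mu_i(B_{i,f})\geq 1-\alpha$, and $f(B_{i,f})$ is contained in the interval $[m_i(f)-\tfrac{\epsilon}{2},\,m_i(f)+\tfrac{\epsilon}{2}]$, so that $\mathrm{PartDiam}(f_\ast(\mu_i),1-\alpha)\leq\diam(f(B_{i,f}),d_{\mathbb{R}})\leq\epsilon$. Taking the supremum over $f\in\mathrm{Lip}_1(X,d)$ gives $\mathrm{ObsDiam}(X,d,\mu_i;-\alpha)\leq\epsilon$ for all $i\geq i_0$, as required.

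\textbf{From $(3)$ to $(1)$.} Assume $(3)$ and let $(B_i)_{i\in I}$ be Borel subsets of $X$ with $\liminf_{i\to I}\mu_i(B_i)=\beta>0$, and let $U$ be an open entourage of $X$; I must show $\mu_i(U[B_i])\to 1$. Pick a symmetric open entourage $V$ with $V\circ V\subseteq U$; by Urysohn's lemma for uniform spaces, there is a bounded uniformly continuous pseudo-metric $d$ on $X$ (one may take $d$ valued in $[0,1]$, arising from a uniformly continuous function separating the diagonal from the complement of $V$) such that $d(x,y)<1$ forces $(x,y)\in U$ — more precisely, arrange that $\{(x,y):d(x,y)<\tfrac12\}\subseteq U$. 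Consider the $1$-Lipschitz function $f_i\defeq d(\,\cdot\,,B_i)\in\mathrm{Lip}_1(X,d)$; on $B_i$ it vanishes, so $\mu_i(\{f_i=0\})\geq\mu_i(B_i)$, whence $0$ lies within a set of measure $\geq\mu_i(B_i)$. Fix $\alpha\in(0,\beta)$. For large $i$ we have $\mu_i(B_i)>\alpha$, so any Borel set $B\subseteq\mathbb{R}$ with $(f_i)_\ast(\mu_i)(B)\geq 1-\alpha$ must meet $f_i(\{f_i=0\})=\{0\}$; hence $\diam(B,d_{\mathbb{R}})\geq\sup\{t:(f_i)_\ast(\mu_i)([0,t])<1-\alpha\}$, and taking the infimum over such $B$ shows $\mathrm{PartDiam}((f_i)_\ast(\mu_i),1-\alpha)\geq\mathrm{ess\,sup}$-type bound giving $\mu_i(\{f_i>\mathrm{ObsDiam}(X,d,\mu_i;-\alpha)\})\leq\alpha$. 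Since $\mathrm{ObsDiam}(X,d,\mu_i;-\alpha)\to 0$ by $(3)$, for $i$ large enough this quantity is $<\tfrac12$, so $\mu_i(\{x:d(x,B_i)<\tfrac12\})\geq 1-\alpha$, and $\{x:d(x,B_i)<\tfrac12\}\subseteq U[B_i]$ by the choice of $d$. Thus $\liminf_i\mu_i(U[B_i])\geq 1-\alpha$; letting $\alpha\downarrow 0$ (replaying the argument) yields $\lim_i\mu_i(U[B_i])=1$.

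\textbf{Main obstacle.} The delicate point is the direction $(3)\Rightarrow(1)$, specifically the book-keeping that converts a small observable diameter of the single test function $f_i=d(\,\cdot\,,B_i)$ into a concentration statement $\mu_i(\{d(\,\cdot\,,B_i)>t\})\leq\alpha$ for small $t$: one must be careful that the partial diameter is computed with the correct mass threshold $1-\alpha$ and that the presence of an atom (the value $0$ carried by $B_i$) of mass $>\alpha$ forces the rest of the distribution $(f_i)_\ast(\mu_i)$ to concentrate near $0$ once $\mathrm{ObsDiam}$ is small. The choice of the auxiliary bounded pseudo-metric $d$ subordinate to the entourage $U$ — guaranteeing $\{d<\tfrac12\}\subseteq U$ while keeping $d$ genuinely bounded so that hypothesis $(3)$ applies — is the other point requiring care, and is exactly why the proposition is phrased with the a priori weaker-looking condition $(3)$.
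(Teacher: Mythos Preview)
Your proof is correct and follows essentially the same route as the paper's: both arguments derive $(1)\Rightarrow(2)$ from Lemma~\ref{lemma:concentration} and handle $(3)\Rightarrow(1)$ via the $1$-Lipschitz test function $f_i=d(\,\cdot\,,B_i)$ for a bounded pseudo-metric subordinate to the given entourage. Two minor remarks: the existence of that bounded uniformly continuous pseudo-metric with $\{d<\tfrac12\}\subseteq U$ is Weil's metrization lemma rather than Urysohn's lemma, and the paper phrases the concentration-near-zero step more transparently by directly extracting a Borel set $C_i\subseteq X$ of measure $\geq 1-\min\{\epsilon,\tfrac{\sigma}{2}\}$ whose $f_i$-image has diameter $<\delta$ (so $B_i\cap C_i\neq\emptyset$ forces $0\in f_i(C_i)$ and hence $C_i\subseteq U[B_i]$), in place of your somewhat elliptical ``$\mathrm{ess\,sup}$-type bound'' aside.
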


\begin{proof} (1)$\Longrightarrow$(2). Consider a uniformly continuous pseudo-metric $d$ on $X$. For every $i \in I$ and $f \in \mathrm{Lip}_{1}(X,d)$, let $m_{i}(f)$ be a median of $f$ with respect to $\mu_{i}$. Let $\alpha > 0$. We show that \begin{displaymath}
	\lim\nolimits_{i \to I} \mathrm{ObsDiam}(X,d,\mu_{i}; -\alpha) = 0 .
\end{displaymath} Let $\epsilon > 0$. By~(1) and Lemma~\ref{lemma:concentration}, there exists $i_{0} \in I$ such that \begin{equation}\tag{$\ast$}\label{concentration}
	\forall i \in I, \, i \geq i_{0} \colon \quad \sup\nolimits_{f \in \mathrm{Lip}_{1}(X,d)} \mu_{i}\left( \left\{ x \in X \left\vert \, \vert f(x) - m_{i}(f) \vert \geq \tfrac{\epsilon}{2} \right\} \right) \, \leq \, \alpha . \right.
\end{equation} We argue that \begin{displaymath}
	\forall i \in I, \, i \geq i_{0} \colon \qquad \mathrm{ObsDiam}(X,d,\mu_{i};-\alpha) \, \leq \, \epsilon .
\end{displaymath} Let $i \in I$ where $i \geq i_{0}$. If $f \in \mathrm{Lip}_{1}(X,d)$, then $B_{i}(f) \defeq B_{d_{\mathbb{R}}}\!\left(m_{i}(f),\tfrac{\epsilon}{2}\right)$ is a Borel subset of $\mathbb{R}$ with $\diam (B_{i}(f),d_{\mathbb{R}}) \leq \epsilon$ and \begin{displaymath}
	f_{\ast}(\mu_{i})(B_{i}(f)) \, = \, \mu_{i} \!\left( \left\{ x \in X \left\vert \, \vert f(x) - m_{i}(f) \vert < \tfrac{\epsilon}{2} \right\} \right) \, \geq \, 1 - \alpha \right.
\end{displaymath} thanks to~\eqref{concentration}, wherefore $\mathrm{PartDiam}(f_{\ast}(\mu_{i}),1-\alpha) \leq \epsilon$. This completes the argument.

(2)$\Longrightarrow$(3). Trivial.

(3)$\Longrightarrow$(1). Let $U$ be an open entourage of $X$. According to classical work of Weil~\cite{weil}, there exist $\delta > 0$ and a uniformly continuous pseudo-metric $d$ on $X$ with $\diam (X,d) \leq 1$ such that $\{ (x,y) \in X \times X \mid d(x,y) < \delta \} \subseteq U$. Consider a family $(B_{i})_{i \in I}$ of Borel subsets of $X$ such that $\sigma \defeq \liminf_{i \to I} \mu_{i}(B_{i}) > 0$. In order to verify that $\lim_{i \to I} \mu_{i}(U[B_{i}]) = 1$, let $\epsilon > 0$. Fix any $i_{0} \in I$ with $\inf \{ \mu_{i}(B_{i}) \mid i \in I, \, i \geq i_{0} \} > \tfrac{\sigma}{2}$. By~(3), there is $i_{1} \in I$ with $i_{1} \geq i_{0}$ and \begin{equation}\tag{$\ast \ast$}\label{diameter}
	\forall i \in I, \, i \geq i_{1} \colon \qquad \mathrm{ObsDiam}\!\left(X,d,\mu_{i};-\min\!\left\{ \epsilon, \tfrac{\sigma}{2} \right\} \right) \, < \, \delta .
\end{equation} We argue that \begin{displaymath}
	\forall i \in I, \, i \geq i_{1} \colon \qquad \mu_{i}(U[B_{i}]) \, \geq \, 1 - \epsilon .
\end{displaymath} To this end, let $i \in I$ with $i \geq i_{1}$. Note that $f_{i} \colon X \to \mathbb{R}, \, x \mapsto \inf\{ d(x,y) \mid y \in B_{i} \}$ belongs to $\mathrm{Lip}_{1}(X,d)$. Hence, by~\eqref{diameter}, there exists a Borel subset $C_{i} \subseteq X$ with $\mu_{i}(C_{i}) \geq 1 -\min \{ \epsilon, \tfrac{\sigma}{2} \}$ and $\diam (f_{i}(C_{i}),d_{\mathbb{R}}) < \delta$. The former implies that $\mu_{i}(B_{i} \cap C_{i}) > 0$ and thus $B_{i} \cap C_{i} \ne \emptyset$, wherefore $0 \in f_{i}(C_{i})$. Since $\diam (f_{i}(C_{i}),d_{\mathbb{R}}) < \delta$, we now conclude that $f_{i}(C_{i}) \subseteq (-\delta,\delta)$ and hence $C_{i} \subseteq B_{d}(B_{i},\delta) \subseteq U[B_{i}]$. It follows that $\mu_{i}(U[B_{i}]) \geq \mu_{i}(C_{i}) \geq 1-\epsilon$, as desired. \end{proof}

As any bounded right-uniformly continuous pseudo-metric on a topological group is bounded from above by a bounded continuous right-invariant pseudo-metric, we arrive at the following characterization of the L\'evy property on topological groups (with their right uniformity).

\begin{cor}\label{corollary:levy} Let $(\mu_{i})_{i \in I}$ be a net of Borel probability measures on a topological group $G$. The following are equivalent. \begin{itemize}
	\item[$(1)$] $(\mu_{i})_{i \in I}$ is a L\'evy net in $G$. \smallskip
	\item[$(2)$] For every continuous right-invariant pseudo-metric $d$ on $G$, \begin{displaymath}
					\qquad \sup\nolimits_{\alpha > 0} \lim\nolimits_{i \to I} \mathrm{ObsDiam}(G,d,\mu_{i}; -\alpha) = 0.
				\end{displaymath}
	\item[$(3)$] For every bounded continuous right-invariant pseudo-metric $d$ on $G$, \begin{displaymath}
					\qquad \sup\nolimits_{\alpha > 0} \lim\nolimits_{i \to I} \mathrm{ObsDiam}(G,d,\mu_{i}; -\alpha) = 0.
				\end{displaymath}
\end{itemize} \end{cor}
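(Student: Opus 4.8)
The plan is to derive the corollary from Proposition~\ref{proposition:levy} by bridging the gap between \emph{continuous right-invariant} pseudo-metrics on $G$ and pseudo-metrics that are merely \emph{uniformly continuous with respect to the right uniformity}. Two elementary transfer facts do the job. First, every continuous right-invariant pseudo-metric $d$ on $G$ is uniformly continuous with respect to the right uniformity: by right-invariance $d(x,y)$ depends only on $xy^{-1}$, so continuity of $d$ at the neutral element shows that the uniformity induced by $d$ is coarser than the right uniformity of $G$, which is exactly uniform continuity of $d \colon G \times G \to \mathbb{R}$. Second, every \emph{bounded} uniformly continuous pseudo-metric $d$ on $G$ is dominated by a bounded continuous right-invariant one: concretely $d'(x,y) \defeq \sup_{g \in G} d(xg,yg)$ is right-invariant, satisfies $d \le d'$ and $\diam(G,d') = \diam(G,d) < \infty$, and is again uniformly continuous with respect to the right uniformity, since for every $g$ the pair $(xg,x'g)$ is as close as $(x,x')$ in the right uniformity (because $xg(x'g)^{-1} = x{x'}^{-1}$), so that $\{ (x,y) \mapsto d(xg,yg) \mid g \in G \}$ is uniformly equicontinuous. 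This second fact is precisely the remark preceding the corollary.

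With these two facts in hand, the three implications are short. For $(1) \Rightarrow (2)$, let $d$ be a continuous right-invariant pseudo-metric on $G$; by the first transfer fact $d$ is a uniformly continuous pseudo-metric on the uniform space $G$, so Proposition~\ref{proposition:levy}\,$(1)\Rightarrow(2)$ gives $\lim_{i \to I} \mathrm{ObsDiam}(G,d,\mu_i;-\alpha) = 0$ for every $\alpha > 0$, hence $\sup_{\alpha > 0} \lim_{i \to I} \mathrm{ObsDiam}(G,d,\mu_i;-\alpha) = 0$. The implication $(2) \Rightarrow (3)$ is trivial, a bounded continuous right-invariant pseudo-metric being in particular a continuous right-invariant pseudo-metric.

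For $(3) \Rightarrow (1)$, I would invoke Proposition~\ref{proposition:levy}\,$(3)\Rightarrow(1)$: it suffices to check that $\lim_{i \to I} \mathrm{ObsDiam}(G,d,\mu_i;-\alpha) = 0$ for every \emph{bounded} uniformly continuous pseudo-metric $d$ on $G$ and every $\alpha > 0$. Given such a $d$, pick $d' \ge d$ bounded, continuous and right-invariant as in the second transfer fact. Remark~\ref{remark:monotonicity} yields $0 \le \mathrm{ObsDiam}(G,d,\mu_i;-\alpha) \le \mathrm{ObsDiam}(G,d',\mu_i;-\alpha)$ for all $i \in I$ and $\alpha > 0$, while hypothesis $(3)$ applied to $d'$ forces $\lim_{i \to I} \mathrm{ObsDiam}(G,d',\mu_i;-\alpha) = 0$ for each $\alpha > 0$; a squeeze then gives $\lim_{i \to I} \mathrm{ObsDiam}(G,d,\mu_i;-\alpha) = 0$, as required.

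The only step requiring any care is verifying that the right-invariantisation $d' = \sup_{g} d(\,\cdot\, g,\,\cdot\, g)$ of a bounded uniformly continuous pseudo-metric is again uniformly continuous, i.e.\ that right translations act by uniform isomorphisms of the right uniformity, which is immediate from the defining entourages $\{(x,y) \mid yx^{-1} \in U\}$. Everything else is a direct application of Proposition~\ref{proposition:levy} and Remark~\ref{remark:monotonicity}, so I do not anticipate a genuine obstacle.
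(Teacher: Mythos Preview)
Your proposal is correct and follows essentially the same approach as the paper: the paper's entire proof is the sentence preceding the corollary, namely that any bounded right-uniformly continuous pseudo-metric is dominated by a bounded continuous right-invariant one, which together with Proposition~\ref{proposition:levy} (and implicitly Remark~\ref{remark:monotonicity}) yields the result. Your write-up simply makes explicit the standard right-invariantisation $d'(x,y)=\sup_{g\in G} d(xg,yg)$ and the routine verification that a continuous right-invariant pseudo-metric is right-uniformly continuous, both of which the paper leaves to the reader.
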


In view of Corollary~\ref{corollary:levy}, let us point out two consequences of our results. For one thing, Corollary~\ref{corollary:observable.diameters} yields a quantitative version of~\cite[Theorem~7.1]{GromovMilman}, i.e., the extreme amenability of L\'evy groups. And for another thing, Theorem~\ref{theorem:observable.diameters} readily implies~\cite[Theorem~3.9]{PestovSchneider}, an extension of the result for Polish groups by Pestov~\cite[Theorem~5.7]{pestov10} generalizing earlier work of Glasner, Tsirelson and Weiss~\cite[Theorem~1.1]{GlasnerTsirelsonWeiss}.

\begin{cor}[\cite{PestovSchneider}, Theorem~3.9]\label{corollary:whirly} If a topological group $G$ admits a L\'evy net of Borel probability measures UEB-converging to invariance over $G$, then $G$ is whirly amenable, i.e., \begin{enumerate}
	\item[$\bullet$] $G$ is amenable, and
	\item[$\bullet$] every invariant regular Borel probability measure on a $G$-flow is supported on the set of fixed points.
\end{enumerate} \end{cor}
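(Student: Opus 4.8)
The plan is to read both clauses of whirly amenability off the results established above. That $G$ is amenable is immediate: the hypothesis provides, in particular, a net of Borel probability measures UEB-converging to invariance over $G$, so Theorem~\ref{theorem:topological.day} applies. It therefore remains to establish the second clause. Fix a $G$-flow $X$ together with a $G$-invariant regular Borel probability measure $\nu$ on $X$, and write $\mathrm{Fix}(X) \defeq \{ x \in X \mid gx = x \text{ for all } g \in G \}$. Since $\nu$ is regular, $\nu(\spt \nu) = 1$, so it suffices to show that $\spt \nu \subseteq \mathrm{Fix}(X)$.

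The crux is the following claim: for every continuous pseudo-metric $d$ on $X$ and every finite $E \subseteq G$,
\[
	\int \sup\nolimits_{g \in E} d(x,gx) \, d\nu(x) \, = \, 0 .
\]
To see this, apply Theorem~\ref{theorem:observable.diameters}, which bounds the (manifestly nonnegative) left-hand side above by $\sup_{\alpha > 0} \liminf_{i \to I} \sup_{x \in X} \mathrm{ObsDiam}(G,d_{G,x},\mu_{i};-\alpha)$. By Remark~\ref{remark:monotonicity}, for each $i \in I$ and each $\alpha > 0$ one has $\sup_{x \in X} \mathrm{ObsDiam}(G,d_{G,x},\mu_{i};-\alpha) \leq \mathrm{ObsDiam}(G,d_{G,X},\mu_{i};-\alpha)$. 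Now $d_{G,X}$ is a bounded continuous right-invariant pseudo-metric on $G$ and $(\mu_{i})_{i \in I}$ is a L\'evy net in $G$, so Corollary~\ref{corollary:levy}~($(1) \Rightarrow (3)$) yields $\lim_{i \to I} \mathrm{ObsDiam}(G,d_{G,X},\mu_{i};-\alpha) = 0$ for every $\alpha > 0$. Consequently the upper bound collapses to $0$, and the claim follows.

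To conclude, suppose for contradiction that there is $x_{0} \in \spt \nu \setminus \mathrm{Fix}(X)$, say $g_{0}x_{0} \neq x_{0}$ for some $g_{0} \in G$. As $X$ is compact Hausdorff, pick $f \in \mathrm{C}(X)$ with $f(x_{0}) \neq f(g_{0}x_{0})$ and set $d(x,y) \defeq \lvert f(x) - f(y) \rvert$, a continuous pseudo-metric on $X$. The function $x \mapsto d(x,g_{0}x)$ is continuous, since the action of $G$ on $X$ is, and it is strictly positive at $x_{0}$; hence $N \defeq \{ x \in X \mid d(x,g_{0}x) > 0 \}$ is an open neighbourhood of $x_{0}$, so $\nu(N) > 0$ because $x_{0} \in \spt \nu$. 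On the other hand, the claim applied with $E = \{ g_{0} \}$ forces $d(x,g_{0}x) = 0$ for $\nu$-almost every $x$, i.e.\ $\nu(N) = 0$ --- a contradiction. Therefore $\spt \nu \subseteq \{ x \in X \mid g_{0}x = x \}$ for every $g_{0} \in G$, whence $\spt \nu \subseteq \mathrm{Fix}(X)$, as desired.

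The argument is a fairly routine unwinding of Theorem~\ref{theorem:observable.diameters} and the L\'evy-net characterisation in Corollary~\ref{corollary:levy}; the one point that needs care is that $X$ is only assumed compact Hausdorff, not metrizable, so the passage from ``$d(\cdot,g_{0}\cdot) = 0$ $\nu$-almost everywhere for every continuous pseudo-metric $d$'' to ``$\spt \nu \subseteq \mathrm{Fix}(X)$'' must be carried out by separating a putative non-fixed point of the support with a single continuous function rather than by intersecting (possibly uncountably many) full-measure sets. I do not anticipate any substantive obstacle beyond this bookkeeping.
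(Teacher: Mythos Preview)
Your proof is correct and follows precisely the route indicated in the paper's own one-line argument, which simply cites Theorem~\ref{theorem:observable.diameters}, Corollary~\ref{corollary:levy}, and Remark~\ref{remark:monotonicity}; you have merely unpacked what ``immediate consequence'' means, including the careful point about separating a non-fixed support point by a single continuous function rather than appealing to metrizability. The justification that $\nu(\spt\nu)=1$ (which you invoke at the start) is indeed available for regular Borel probability measures on compact Hausdorff spaces via inner regularity and a compactness argument, so there is no gap there either.
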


\begin{proof} The amenability of $G$ is due to Theorem~\ref{theorem:topological.day}, while the second assertion is an immediate consequence of Theorem~\ref{theorem:observable.diameters} combined with Corollary~\ref{corollary:levy} and Remark~\ref{remark:monotonicity}. \end{proof}

Let us finish with an open problem.

\begin{remark} Since the L\'evy property can be stated in the more general framework of uniform spaces, it would be very interesting to know if Gromov's concentration topology admits an equally natural extension in that context. If so, then one may hope to generalize Theorem~\ref{theorem:equivariant.concentration} to the case of topological groups with non-metrizable universal minimal flow. \end{remark}

\section*{Acknowledgments}

This research has been supported by funding of the Excellence Initiative by the German Federal and State Governments as well as the Brazilian Conselho Nacional de Desenvolvimento Cient\'{i}fico e Tecnol\'{o}gico (CNPq), processo 150929/2017-0. The author is deeply indebted to Vladimir Pestov for a number of inspiring and insightful discussions about Gromov's concentration topology as well as his most valuable comments on an earlier version of this paper. Furthermore, the kind hospitality of CFM--UFSC (Florian\'opolis) and IME--USP (S\~{a}o Paulo) during the origination of this work is gratefully acknowledged.


\end{document}